\newtheorem{thm}{Theorem}[section]
\newtheorem{cor}[thm]{Corollary}
\newtheorem{lem}[thm]{Lemma}
\newtheorem{prop}[thm]{Proposition}
\theoremstyle{definition}
\newtheorem{defn}[thm]{Definition}
\theoremstyle{remark}
\newtheorem{remark}[thm]{Remark}
\numberwithin{equation}{section}
\newcommand{\abs}[1]{\left\vert#1\right\vert}
\newcommand{\R}{{\mathbb R}}
\newcommand{\Z}{{\mathbb Z}}
\newcommand{\C}{{\mathbb C}}
\newcommand{\D}{{\mathbb D}}
\newcommand{\A}{{\bar A}}
\renewcommand{\P}{{\bar P}}
\newcommand{\Q}{{\bar Q}}
\newcommand{\Ab}{{\mathbb A}}
\newcommand{\Sb}{{\mathbb S}}
\newcommand{\F}{{\mathcal F}}
\newcommand{\G}{\Gamma}
\newcommand{\g}{\gamma}
\newcommand{\Bs}{{\mathcal B}}
\newcommand{\Cs}{{\mathcal C}}
\newcommand{\Gs}{{\mathcal G}}
\newcommand{\Os}{{\mathcal O}}
\newcommand{\Id}{\mathrm{Id}}
\DeclareMathOperator{\Cod}{Cod}
\newcommand\sequence[2][\A]{[#2]}
\newcommand\commutativeDiagram[9][1.25]{\raisebox{-3em}{\begin{tikzpicture}[scale=1.5]
    \node (A) at (0,1) {$#2$};
    \node (B) at (#1,1) {$#4$};
    \node (C) at (0,0) {$#7$};
    \node (D) at (#1,0) {$#9$};
	\path[->,font=\scriptsize,>=angle 90]
        (A) edge node [above] {$#3$} (B)
        (A) edge node [left]  {$#5$} (C)
        (B) edge node [right] {$#6$} (D)
        (C) edge node [below] {$#8$} (D);
\end{tikzpicture}}}
\title[Adler and Flatto revisited]{Adler and Flatto revisited: cross-sections for geodesic flow on compact surfaces of constant negative curvature}
\dedicatory{Dedicated to the memory of Roy Adler}
\author{Adam Abrams}
\address{Department of Mathematics, The Pennsylvania State University, University Park, PA 16802} 
\email{ajz5041@psu.edu}
\author{Svetlana Katok}
\address{Department of Mathematics, The Pennsylvania State University, University Park, PA 16802} 
\email{sxk37@psu.edu}
\date{}
\subjclass[2010]{37D40 (Primary), 20H10, 37B10}
\keywords{Fuchsian groups, geodesic flow, symbolic dynamics, reduction theory, boundary maps, attractor, cross-section}
\begin{document}

\begin{abstract}
	We describe a family of arithmetic cross-sections for geodesic flow on compact surfaces of constant negative curvature based on the study of generalized Bowen-Series boundary maps associated to cocompact torsion-free Fuchsian groups and their natural extensions, introduced in \cite{KU17}. If the boundary map satisfies the short cycle property, i.e., the forward orbits at each discontinuity point coincide after one step, the natural extension map has a global attractor with finite rectangular structure, and the associated arithmetic cross-section is parametrized by the attractor. This construction allows us to represent the geodesic flow as a special flow over a symbolic system of coding sequences. In special cases where the ``cycle ends'' are discontinuity points of the boundary maps, the resulting symbolic system is sofic. Thus we extend and in some ways simplify several results of Adler-Flatto's 1991 paper \cite{AF91}. We also compute the measure-theoretic entropy of the boundary maps.
\end{abstract}

\maketitle

\section{Introduction}\label{sec introduction}

Adler and Flatto have written three papers~\cite{AF82, AF84, AF91} devoted to representation of the geodesic flow on surfaces of constant negative curvature as special flow over a topological Markov chain. The first two papers are devoted to the modular surface and the third to the compact surface $M=\G\backslash\D$ of genus $g\ge 2$, where $\D=\{\, z \in \C : \abs z < 1 \,\}$ is the unit disk endowed with hyperbolic metric
\begin{equation}
    \label{hypmetric} \frac{2 \abs{dz} }{1-{\abs z}^2}
\end{equation}
and $\G$ is a finitely generated Fuchsian group of the first kind acting freely on $\D$.

Recall that geodesics in this model are half-circles or diameters orthogonal to $\Sb=\partial\D$, the circle at infinity. The geodesic flow $\{\tilde\varphi^t\}$ on $\D$ is defined as an $\R$-action on the unit tangent bundle $S\D$ that moves a tangent vector along the geodesic defined by this vector with unit speed. The geodesic flow $\{\tilde\varphi^t\}$ on $\D$ descends to the geodesic flow $\{\varphi^t\}$ on the factor $M=\Gamma\backslash\D$ via the canonical projection
\begin{equation}
    \label{projection} \pi: S\D \to SM
\end{equation}
of the unit tangent bundles.  The orbits of the geodesic flow $\{\varphi^t\}$ are oriented geodesics on $M$.

A classical (Ford) fundamental domain for $\G$ is a $4g$-sided regular polygon centered at the origin. In~\cite{AF91}, Adler and Flatto used another fundamental domain---an $(8g-4)$-sided polygon $\F$---that was much more convenient for their purposes. Its sides are geodesic segments which satisfy the {\em extension condition}: the geodesic extensions of these segments never intersect the interior of the tiling sets $\gamma\mathcal F$, $\gamma\in \G$.

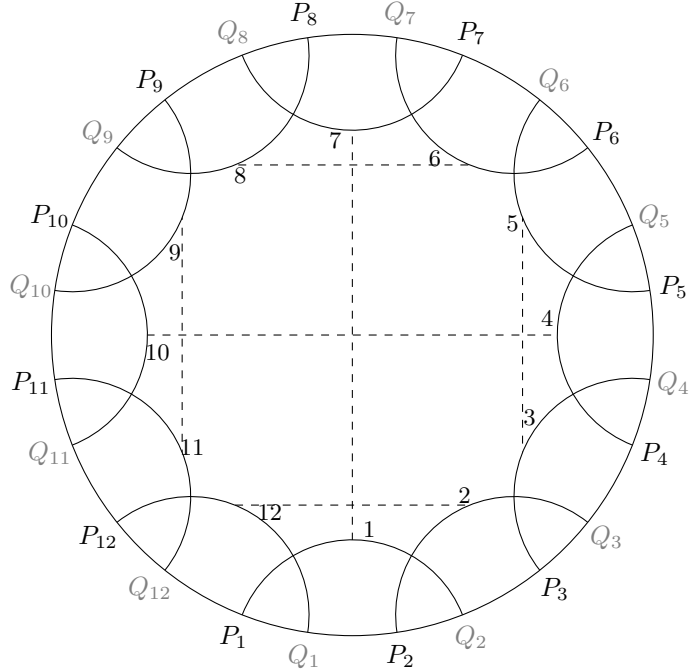
\begin{figure}
\begin{tikzpicture}[scale=4]
    \foreach \k in {1,2,...,12} {
        \draw [rotate=\k*30] (0:1.07457)+(111.471:0.39332) arc (111.471:248.529:0.39332);
        \draw (\k*30-120+5:0.65) node {\scriptsize$\k$};
        
        \draw (-141.471+\k*30:1.08) node [black] {\footnotesize$P_{\k}$};
        \draw (-128.529+\k*30:1.08) node [gray] {\footnotesize$Q_{\k}$};
    }
    \draw (0,0) circle (1);
    
  	\clip (0.733945,-0.19666) arc (-150:-210:0.39332) arc (-120:-180:0.39332) arc (-90:-150:0.39332) arc (-60:-120:0.39332) arc (-30:-90:0.39332) arc (0:-60:0.39332) arc (30:-30:0.39332) arc (60:0:0.39332) arc (90:30:0.39332) arc (120:60:0.39332) arc (150:90:0.39332) arc (180:120:0.39332);
	\draw [dashed] (0,-1) -- (0,1);
	\draw [dashed] (-1,0) -- (1,0);
	\draw [dashed] (225:0.8) rectangle (45:0.8);
    
\end{tikzpicture}\vspace*{-1em}
\caption{Side-pairing of the fundamental domain $\F$ for genus $g=2$.}
\label{fig disk}
\end{figure}

The fundamental polygon $\F$ will play an important role in this paper, so let us describe it in detail. We label the sides of $\F$ in a counterclockwise order by numbers $1 \le i \le 8g-4$ and label the vertices of $\F$ by $V_i$ so that side $i$ connects $V_i$ to $V_{i+1} \pmod {8g-4}$. 

We denote by $P_i$ and $Q_{i+1}$ the endpoints of the oriented infinite geodesic that extends side $i$ to the circle at infinity~$\Sb$.\footnote{\,The points $P_i$ and $Q_i$ in this paper and~\cite{BS79,KU17} are denoted by $a_i$ and $b_{i-1}$, respectively, in \cite{AF91}.} The order of endpoints on $\Sb$ is the following:
\[ P_1, Q_1, P_2, Q_2, \ldots, Q_{8g-4}. \]
The identification of the sides of $\F$ is given by the side pairing rule
\[ \sigma(i) := \left\{ \begin{array}{ll}
    4g-i \bmod (8g-4) & \text{ if $i$ is odd} \\
    2-i \bmod (8g-4) & \text{ if $i$ is even}
\end{array} \right. \]
The generators $T_i$ of $\G$ associated to this fundamental domain are M\"obius transformations satisfying the following properties:
\begin{align}
    T_{\sigma(i)}T_i&=\Id\label{r11}\\
    T_i(V_i)&=V_{\rho(i)}, \text{ where } \rho(i)=\sigma(i)+1\label{r12}\\
    T_{\rho^3(i)}T_{\rho^2(i)}T_{\rho(i)}T_i&=\Id\label{r13}
\end{align}

According to B. Weiss \cite{W92}, the existence of such a fundamental polygon is an old result of Dehn, Fenchel, and Nielsen, while J.~Birman and C.~Series \cite{BiS87} attribute it to Koebe~\cite{Ko29}. Adler and Flatto~\cite[Appendix A]{AF91} give a careful proof of existence and properties of the fundamental $(8g-4)$-sided polygon for any surface group $\G$ such that $\G\backslash\D$ is a compact surface of genus $g$. Notice that in general the polygon $\F$ need not be regular. If $\F$ is regular, it is the Ford fundamental domain, i.e., ${P_iQ_{i+1}}$ is the isometric circle for $T_i$, and $T_i(P_iQ_{i+1})=Q_{\sigma(i)+1}P_{\sigma(i)}$ is the isometric circle for $T_{\sigma(i)}$ so that the inside of the former isometric circle is mapped to the outside of the latter, and all internal angles of $\F$ are equal to $\frac{\pi}2$. Figure~\ref{fig disk} shows such a construction for $g=2$.
\begin{remark}
    Although the results in \cite{KU17} were obtained using the regular fundamental polygon, the general case is reduced to this specific situation by the Fenchel-Nielsen theorem  without affecting the results of \cite{KU17}, as explained in \cite[Appendix A]{AF91} and in \cite{KU17e}. More precisely, given $\G'\backslash \D$  a compact surface of genus $g>1$, there exists a Fuchsian group $\Gamma$ and (by the Fenchel-Nielsen theorem \cite{T72}) an orientation-preserving homeomorphism $h$ from $\bar\D$ onto $\bar\D$ such that
\begin{enumerate}
    \item $\G\backslash \D$ is a compact surface of the same genus $g$;
    \item $\Gamma$ has a fundamental domain $\mathcal F$ given by a regular $(8g-4)$-sided polygon;
    \item $\G'=h\circ \G\circ h^{-1}$;
    \item $\G'$ has a fundamental domain $\mathcal F'$ given by an $(8g-4)$-sided polygon whose sides are produced by the geodesics $h(P_i)h(Q_{i+1})$ and identified by $T'_i=h\circ T_i\circ h^{-1}$, where $\{T_i\}$ are generators of $\G$ identifying the sides of $\mathcal F$. 
\end{enumerate}
In this paper we use the regular polygon in some proofs for convenience. 
\end{remark}

A \emph{cross-section} $C$ for the geodesic flow $\{\varphi^t\}$ is a subset of the unit tangent bundle $SM$ visited by (almost) every geodesic infinitely often both in the future and in the past. It is well-known that the geodesic flow can be represented as a {\em special flow} on the space
\[ {C}^{h}=\{\, (v,s) : v\in C, 0\leq s\leq h(v) \,\}. \]
It is given by the formula $\varphi^t(v,s)=(v, s+t)$ with the identification $(v,h(v))=(R(v),0)$, where the ceiling function $h:C\to\R$ is the {\em time of the first return} of the geodesic defined by $v$ to $C$, and $R:C\to C$ given by $R(v)=\varphi^{h(v)}(v)$ is the {\em first return map} of the geodesic to $C$.

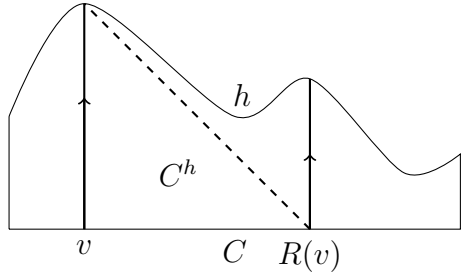
\begin{figure}[thb]
\begin{tikzpicture}
	\draw (0,0) -- (0,1.5) plot [smooth] coordinates {(0,1.5) (1,3) (3,1.5) (4,2) (5.25,0.75) (6,1)} -- (6,0) -- (0,0);
	\draw [thick] (1,0) node [below] {$v$} edge [->] (1,1.75) edge [-] (1,3);
	\draw [thick] (4,0) node [below] {$R(v)$} edge [->] (4,1) edge [-] (4,2);
	\draw [thick,dashed] (4,0) -- (1,3);
	\draw (3,0) node [below] {$C$};
	\draw (2.25,0.75) node {$C^h$};
	\draw (3.1,1.5) node [above] {$h$};
\end{tikzpicture}\vspace*{-1em}
\caption{Geodesic flow is a special flow}
\end{figure}

Let $\mathcal N$ be a finite or countable alphabet, ${\mathcal N}^{\Z}=\{\, s = \{n_i\}_{i\in \Z} : n_i \in \mathcal N \,\}$ be the space of all bi-infinite sequences endowed with the Tikhonov (product) topology. Let
\[ \sigma:{\mathcal N}^\Z\to {\mathcal N}^\Z\text{ defined by }(\sigma s)_i=n_{i+1} \]
be the left shift map, and $\Lambda\subset {\mathcal N}^\Z$ be a closed $\sigma$-invariant subset. Then $ (\Lambda,\sigma)$ is called a {\em symbolic dynamical system}. There are some important classes of such dynamical systems. The space $({\mathcal N}^\Z, \sigma)$ is called the {\em full shift} (or the {\em topological Bernoulli shift}). If the space $\Lambda$ is given by a set of simple transition rules which can be described with the help of a matrix consisting of zeros and ones, we say that $ (\Lambda,\sigma)$ is a {\em one-step topological Markov chain} or simply a {\em topological Markov chain} (sometimes $(\Lambda,\sigma)$ is also called a {\em subshift of finite type}). A factor of a topological Markov chain is called a {\em sofic shift}. For the definitions, see~\cite[Sec.~1.9]{KH}.

\smallskip

In order to represent the geodesic flow as a special flow over a symbolic dynamical system, one needs to choose an appropriate cross-section $C$ and code it, i.e., find an appropriate symbolic dynamical system $ (\Lambda,\sigma)$ and a continuous surjective map $\Cod: \Lambda\to C$ (in some cases the actual domain of $\Cod$ is $\Lambda$ except a finite or countable set of excluded sequences) defined such that the diagram
\[ \commutativeDiagram{\Lambda}{\sigma}{\Lambda}{\Cod}{\Cod}{C}{R}{C} \]
is commutative. We can then talk about {\em coding sequences} for geodesics defined up to a shift that corresponds to a return of the geodesic to the cross-section $C$. Notice that usually the coding map is not injective but only finite-to-one (see, e.g.,~\cite[\S 3.2 and \S 5]{A98} and Examples \renewcommand{\thesubsection}{\arabic{subsection}}\ref{mult geom} and~\ref{mult arith} in Section~\ref{sec examples} of this paper).

Slightly paraphrased, Adler and Flatto's method of representing the geodesic flow on $M$ as a special flow is the following. Consider the set $C_G$ of unit tangent vectors $(z,\zeta)\in S\D$ based on the boundary of $\F$ and pointed inside $\F$.\footnote{\,Adler and Flatto \cite[p.~240]{AF91} used unit tangent vectors pointed out of $\F$, but the results are easily transferable.} Every geodesic $\gamma$ in $\D$ is equivalent to one intersecting the fundamental domain $\F$ and thus is comprised of countably many segments in $ \F$. More precisely, if we start with a segment $\g\cap\F$ which enters $\F$ through side $j$ and exits $\F$ through side $i$, then $T_i\g\cap\F$ is the next segment in $\F$, and $T_{j}\g\cap\F$ is the previous segment.
It is clear that the canonical projection of $\g$ to $SM$, $\pi(\g)$, visits $C_G$ infinitely often, hence $C_G$ is a cross-section which we call {\em the geometric cross-section}. The geodesic $\pi(\g)$ can be coded by a bi-infinite sequence of generators of $\G$ identifying the sides of $\F$, as explained below. This method of coding goes back to Morse; for more details see~\cite{K96}.

The set $\Omega_G$ of oriented  geodesics in $\D$ tangent to the vectors in $C_G$ coincides with the set of geodesics in $\D$ intersecting $\F$; this is depicted in Fig.~\ref{fig rainbow CL} in coordinates $(u,w)\in \Sb\times\Sb, u\neq w$, where $u$ is the beginning  and $w$ is the end of a geodesic. The coordinates of the ``vertices'' of $\Omega_G$ are $(P_j,Q_{j+1})$ (the upper part) and $(Q_j,P_j)$ (the lower part). Let $\Gs_i$ be the ``curvilinear horizontal slice'' comprised of points $(u,w)$ such that geodesics $uw$ exit $\F$ through side $i$; this slice is in the horizontal strip between $P_i$ and $Q_{i+1}$. The map $F_G(u,w)$ piecewise transforms variables by the same M\"obius transformations that identify the sides of $\F$, that is,
\begin{equation}
    F_G(u,w)=(T_iu, T_iw) \quad\text{if } (u,w)\in \Gs_i,
\end{equation}
and $\Gs_i$ is mapped to the ``curvilineal vertical slice'' belonging to the vertical strip between $P_{\sigma(i)}$ and $Q_{\sigma(i)+1}$. $F_G$ is a bijection of $\Omega_G$. Due to the symmetry of the fundamental polygon $\F$, the ``curvilinear horizontal (vertical) slices'' are congruent to each other by a Euclidean translation. 

For a geodesic $\g$, the {\em geometric coding sequence}
\[ 
	[\gamma]_G = \sequence[G]{\ldots,n_{-2},n_{-1},n_0,n_1,n_2,\ldots}
\] 
is such that $\sigma(n_k)$ is the side of $\F$ through which the geodesic $F_G^k\g$ exits $\F$. By construction, the left shift in the space of geometric coding sequences corresponds to the map $F_G$, and the geodesic flow $\{\varphi^t\}$ becomes a special flow over a symbolic dynamical system.

\begin{figure}[hbt]
\begin{tikzpicture}
	\draw (0,0) node [left=1em] {\includegraphics[width=0.45\textwidth]{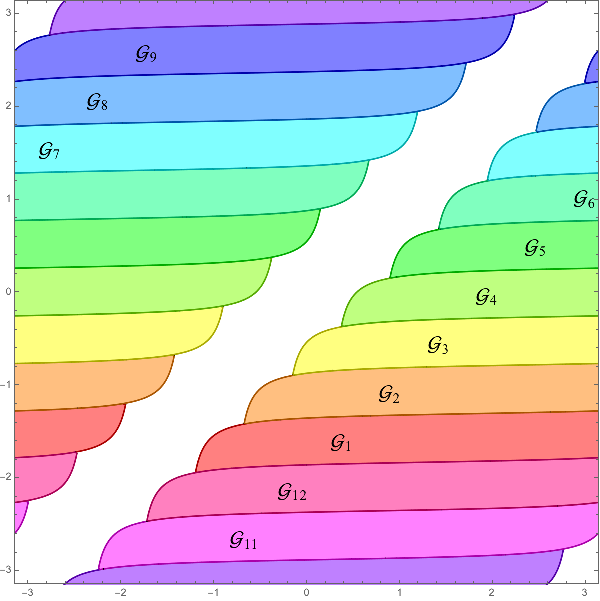}};
	\draw (0,0) node [above] {\;$\stackrel{F_G}{\longrightarrow}$};
	\draw (0,0) node [right=1em] {\includegraphics[width=0.45\textwidth]{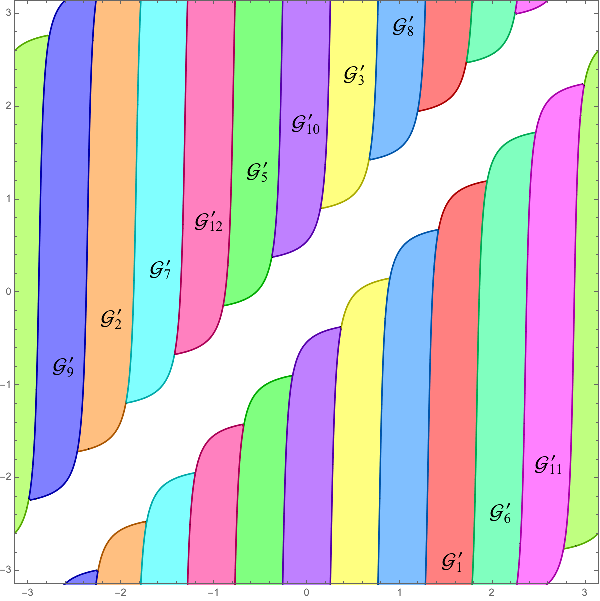}};
\end{tikzpicture}\vspace*{-1em}
\caption{Curvilinear set $\Omega_G$ and its image for $g=2$. Here $\Gs'_i = F_G(\Gs_i)$.}
\label{fig rainbow CL}
\end{figure}

The set of geometric coding sequences is natural to consider, but it is never Markov. In order to obtain a special flow over a Markov chain, Adler and Flatto replaced the curvilinear boundary of the set $\Omega_G$ by polygonal lines, obtaining what they call a ``rectilinear'' set $\Omega_\P$ and defining an auxiliary map $F_\P$ mapping any horizontal line into a part of a horizontal line. We denote by $F_G:\Omega_G \to \Omega_G$ and $F_\P:\Omega_\P \to \Omega_\P$, respectively, their \emph{curvilinear} and \emph{rectilinear} transformations.\footnote{Starting in Section~\ref{sec reduction}, we deal with a family of generalized rectilinear maps $F_\A$, and the notations $F_\P$ and $\Omega_\P$ are in conformance with those.}
They prove that these maps are conjugate and that the rectilinear map is sofic.

We should mention the relation between \cite{AF91} and the series of papers by C.~Series \cite{S81, S81', S86} where she also studies the problem of representing the geodesic flow by symbolic systems following the fundamental paper \cite{BS79}. Adler and Flatto acknowledge that they reach essentially the same mathematical conclusions. However, the use of the $(8g-4)$-sided fundamental polygon $\F$ in \cite{AF91} makes their exposition much more accessible and allows for extensions of their results.

For the modular surface, the rectilinear map is related to a Gauss-like $1$-dimensional map and continued fractions, and for the compact surface case, to the boundary map considered previously by Bowen and Series~\cite{BS79}. In fact, it is exactly the natural extension of the corresponding boundary map. The authors of~\cite{KU12} studied so-called $(a,b)$-continued fractions and corresponding boundary maps for coding of geodesics on the modular surface. Their method is based on  the results of ~\cite{KU10} establishing finite rectangular structure of the attractors of the associated natural extension maps and the corresponding ``reduction theory.''

In this paper we use similar approach for the compact surface case based on the results of~\cite {KU17}. Here again we consider a family of boundary maps which give rise to a family of arithmetic cross-sections parametrized by attractors of the corresponding natural extension maps---\emph{rectilinear} maps which we study on their own,  thus representing the geodesic flow as a special flow over a symbolic system of coding sequences.
The finite rectangular structure of these attractors and the corresponding ``reduction theory'' plays an essential role in our approach. We show that for some of the arithmetic cross-sections the resulting symbolic system turns out to be sofic, thus extending and in some ways simplifying the Adler-Flatto results.

The paper is organized as follows. Section~\ref{sec reduction} recalls the definitions of the relevant maps as well as previous results on attractors. Section~\ref{sec conjugacy} describes conjugacy between curvilinear and rectilinear maps, extending Adler-Flatto's phenomenon of ``bulges mapping to corners" to a broader class of boundary maps. In Section~\ref{sec cross-section}, we define a class of arithmetic cross-section for the geodesic flow on $M = \G\backslash\D$, and Section~\ref{sec coding} describes the arithmetic coding of geodesics using these cross-sections. Although the arithmetic cross-sections we consider actually coincide with the geometric one, the symbolic dynamical systems are different (some examples are discussed in Section~\ref{sec examples}). While the set of geometric coding sequences is never Markov, for some choice of parameters the arithmetic ones are. This is discussed in Sections~\ref{sec Markov} and~\ref{sec examples of Markov}. In Section~\ref{dual codes} we discuss dual codes.  In Section~\ref{sec entropy}, we apply results of~\cite{KU17} to obtain explicit formulas for invariant measures and calculate the measure-theoretic entropy.

 \section{The reduction procedure} \label{sec reduction}

Let $f_\A $ be a generalized Bowen-Series boundary map studied in~\cite{KU17} and defined by the formula 
\begin{equation}
    f_\A (x)=T_i(x) \quad\text{if } x \in [A_i,A_{i+1}),
\end{equation}
where
\[ \A=\{A_1,\dots,A_{8g-4}\},\quad A_i\in (P_i,Q_i), \]
and let $F_\A $ be the corresponding two-dimensional map: 
\begin{equation} \label{FA}
    F_\A (x,y)=(T_i(x),T_i(y)) \quad\text{if } y \in [A_i,A_{i+1}).
\end{equation}
The map $F_\A $ is a \emph{natural extension} of the boundary map $f_\A $. Setting $\Delta = \{\, (x,y) \in \Sb \times \Sb : x = y \,\}$, we have $F_\A$ as a map on $\Sb \times \Sb \setminus \Delta$. If we identify a geodesic in $\D$ from $u$ to $w$ with a point in $\Sb\times\Sb \setminus \Delta$, $F_\A$ may also be considered as a map on geodesics.

Adler and Flatto~\cite{AF91} studied the partition 
\[ \P = \{ P_1, \ldots, P_{8g-4} \}, \]
which was previously considered by Bowen and Series~\cite{BS79}, and their ``rectilinear map'' ${T}_{R}$ is exactly our $F_\P$.

A key ingredient in analyzing the map $F_\A $ is what we call the \emph{cycle property} of the partition points $\{A_1,\dots,A_{8g-4}\}$. Such a property refers to the structure of the orbits of each $A_i$ that one can construct by tracking the two images $T_iA_i$ and $T_{i-1}A_i$ of these points of discontinuity of the map $f_\A $. It happens that some forward iterates of these two images $T_iA_i$ and $T_{i-1}A_i$ under $f_\A $ coincide~\cite [Theorem 1.2] {KU17}.

If a cycle closes up after one iteration, that is,
\begin{equation}
    \label{eq:sc} f_\A (T_iA_i)=f_\A (T_{i-1}A_i),
\end{equation}
we say that the point $A_i$ satisfies the \emph{short cycle property}. We say a partition $\A$ has the \emph{short cycle property} if each $A_i \in \A$ has the short cycle property.

As proved in~\cite[Theorems 1.3 and 2.1]{KU17}, if the partition $\A$ has a short cycle property or if $\A=\P$, then $F_\A$ has a {\em global attractor}  
\[ \Omega_\A = \bigcap_{n=0}^\infty F_\A^n( \Sb \times \Sb \setminus \Delta ), \]
which is shown to have {\em finite rectangular structure}, i.e., it is bounded by non-decreasing step-functions with a finite number of steps (see Figure~\ref{fig attractor}) on which $F_\A$ is essentially bijective, and almost every point $(u,w) \in \Sb \times \Sb \setminus \Delta$ is mapped to $\Omega_\A$ after finitely many iterations of $F_\A$.

Explicitly, the $y$-levels of the upper and lower connected component of $\Omega_{\A}$ are, respectively,
\begin{equation}
    \label{B and C} B_i := T_{\sigma(i-1)}A_{\sigma(i-1)} \qquad\text{and}\qquad C_i := T_{\sigma(i+1)}A_{\sigma(i+1)+1}.
\end{equation}
The $x$-levels in this case are the same as for the Bowen-Series map $F_{\P}$, and the set $\Omega_{\A}$ is determined by the corner points located in the strip
$\{\, (x, y) \in \Sb \times \Sb : y \in [A_i, A_{i+1}) \,\}$
with coordinates 
\[ (P_i,B_i) \quad\text{(upper part)} \qquad\text{and}\qquad (Q_{i+1},C_i) \quad\text{(lower part)}. \]
See Figure~\ref{fig attractor} for an example of such an attractor.

\begin{figure}
	\includegraphics{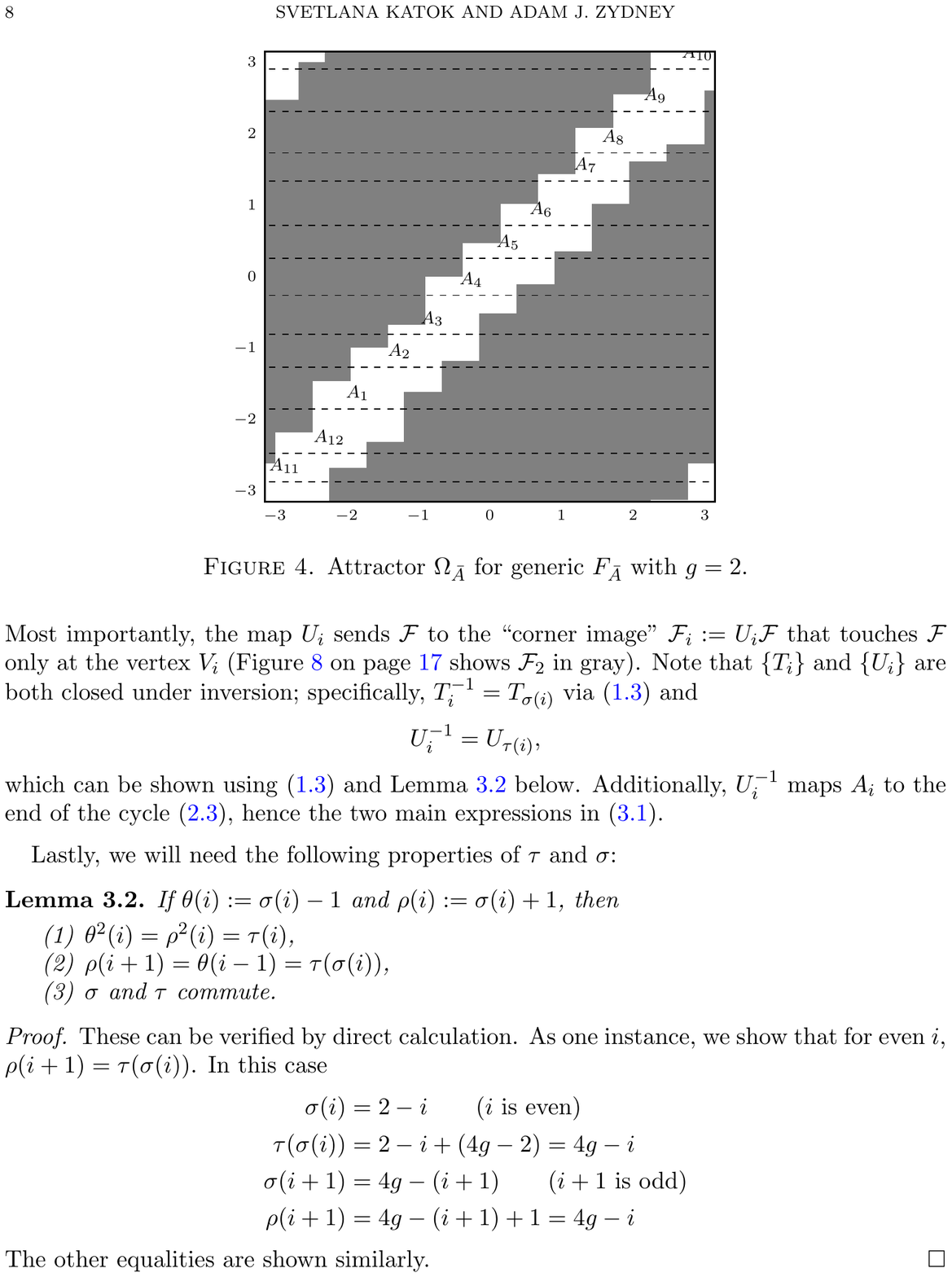}
	\caption{Attractor $\Omega_\A$ for generic $F_\A$ with $g=2$.}
	\label{fig attractor}
\end{figure}

\section{Conjugacy between \texorpdfstring{$F_\A$ and $F_G$}{arithmetic and geometric maps}} \label{sec conjugacy}

The following useful property of $T_i$ will be referenced throughout the paper.
\begin{prop}
    [Proposition~{2}.{2} in \cite{KU17} and Theorem~3.4 in \cite{AF91}] \label{T images} $T_i$ maps the points $P_{i-1}$, $P_i$, $Q_i$, $P_{i+1}$, $Q_{i+1}$, $Q_{i+2}$ respectively to $P_{\sigma(i)+1}$, $Q_{\sigma(i)+1}$, $Q_{\sigma(i)+2}$, $P_{\sigma(i)-1}$, $P_{\sigma(i)}$, $Q_{\sigma(i)}$.
\end{prop}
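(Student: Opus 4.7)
The plan is to derive all six images from the side-pairing structure of $\F$ together with the fact that $T_i$, being a hyperbolic isometry of $\D$, extends to an orientation-preserving homeomorphism of $\Sb$ and therefore preserves the cyclic order on the circle at infinity. The six listed points are precisely the ideal endpoints of the three geodesic extensions of sides $i-1$, $i$, $i+1$; two of these geodesics pass through $V_i$ (the extensions of sides $i-1$ and $i$) and two pass through $V_{i+1}$ (the extensions of sides $i$ and $i+1$).

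First I locate the images of the two endpoints of side $i$ itself. From \eqref{r12}, $T_i(V_i) = V_{\rho(i)} = V_{\sigma(i)+1}$. Since $T_i$ identifies side $i$ with side $\sigma(i)$, the pair $\{V_i, V_{i+1}\}$ is carried bijectively onto $\{V_{\sigma(i)}, V_{\sigma(i)+1}\}$, forcing $T_i(V_{i+1}) = V_{\sigma(i)}$. The extended geodesic of side $i$ has ideal endpoints $P_i$ adjacent to $V_i$ and $Q_{i+1}$ adjacent to $V_{i+1}$; it is mapped to the extended geodesic of side $\sigma(i)$, whose endpoints are $Q_{\sigma(i)+1}$ adjacent to $V_{\sigma(i)+1}$ and $P_{\sigma(i)}$ adjacent to $V_{\sigma(i)}$. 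Reading off these adjacencies gives $T_i(P_i) = Q_{\sigma(i)+1}$ and $T_i(Q_{i+1}) = P_{\sigma(i)}$.

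For the remaining four points I use that the second geodesic through $V_i$ must map to the second geodesic through $T_i(V_i) = V_{\sigma(i)+1}$. Concretely, the extension of side $i-1$, with ideal endpoints $P_{i-1}$ and $Q_i$, is carried to the extension of side $\sigma(i)+1$, with endpoints $P_{\sigma(i)+1}$ and $Q_{\sigma(i)+2}$; analogously, the extension of side $i+1$, with endpoints $P_{i+1}$ and $Q_{i+2}$, is sent to the extension of side $\sigma(i)-1$ through $V_{\sigma(i)}$, with endpoints $P_{\sigma(i)-1}$ and $Q_{\sigma(i)}$. What remains is only to match each preimage to its correct image within each pair.

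This matching is pinned down by orientation preservation on $\Sb$. Comparing the cyclic order $P_{i-1}, P_i, Q_i, Q_{i+1}$ with that of its image, using the values of $T_i(P_i)$ and $T_i(Q_{i+1})$ just obtained, forces $T_i(P_{i-1}) = P_{\sigma(i)+1}$ and $T_i(Q_i) = Q_{\sigma(i)+2}$; the symmetric cyclic-order check at $V_{i+1}$ yields $T_i(P_{i+1}) = P_{\sigma(i)-1}$ and $T_i(Q_{i+2}) = Q_{\sigma(i)}$. The main obstacle is simply keeping the cyclic-order bookkeeping straight, but once the vertex images from the first step are fixed, the labeling convention $P_1, Q_1, P_2, Q_2, \ldots, Q_{8g-4}$ on $\Sb$ together with orientation preservation determines each assignment uniquely.
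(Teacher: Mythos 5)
The paper does not actually prove this proposition; it imports it wholesale from \cite{KU17} and \cite{AF91}, so there is no in-paper argument to compare against. Your proposal is a sensible self-contained reconstruction, and its overall architecture is sound: the vertex images $T_i(V_i)=V_{\sigma(i)+1}$, $T_i(V_{i+1})=V_{\sigma(i)}$ correctly give $T_i(P_i)=Q_{\sigma(i)+1}$ and $T_i(Q_{i+1})=P_{\sigma(i)}$ (this much is even stated explicitly in the introduction, where $T_i(P_iQ_{i+1})=Q_{\sigma(i)+1}P_{\sigma(i)}$ is identified as the isometric circle of $T_{\sigma(i)}$), and your cyclic-order bookkeeping for the remaining four points is correct: orientation preservation does force $T_i(P_{i-1})=P_{\sigma(i)+1}$, $T_i(Q_i)=Q_{\sigma(i)+2}$, $T_i(P_{i+1})=P_{\sigma(i)-1}$, $T_i(Q_{i+2})=Q_{\sigma(i)}$ once the two image geodesics are identified.

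The one step you assert rather than prove is the pivotal one: that ``the second geodesic through $V_i$ must map to the second geodesic through $V_{\sigma(i)+1}$,'' i.e.\ that $T_i$ carries the complete geodesic $P_{i-1}Q_i$ onto the complete geodesic $P_{\sigma(i)+1}Q_{\sigma(i)+2}$. A priori $T_i$ sends the extension of side $i-1$ of $\F$ to \emph{some} geodesic through $V_{\sigma(i)+1}$ containing the side $T_i(\text{side } i-1)$ of the tile $T_i\F$; nothing in the side-pairing combinatorics alone says this geodesic is an extension of a side of $\F$ itself. This is exactly where the extension condition (or, for the regular polygon, the right-angle structure) must be invoked: since all interior angles of $\F$ are $\pi/2$ and $T_i$ is conformal, $T_i(P_{i-1}Q_i)$ is the unique geodesic through $V_{\sigma(i)+1}$ perpendicular to $P_{\sigma(i)}Q_{\sigma(i)+1}$, which is $P_{\sigma(i)+1}Q_{\sigma(i)+2}$; equivalently, the extension condition forces the continuation of side $\sigma(i)+1$ beyond $V_{\sigma(i)+1}$ to run along the boundary of $T_i\F$, hence to coincide with $T_i(\text{side } i-1)$. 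With that one sentence added (and its mirror image at $V_{i+1}$), your argument is complete; without it, the identification of the two image geodesics, and hence the conclusions for $P_{i-1}$, $Q_i$, $P_{i+1}$, $Q_{i+2}$, is unsupported.
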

We will also make use of the map that we call here $U_i$, which is used in both \cite{AF91} and \cite{KU17} without its own notation. The following are all equivalent definitions:
\begin{equation}
    \label{eq:Ui}
    \begin{array}{l@{\;}l}
        U_i &:= (T_{\sigma(i)+1}T_i)^{-1} = T_i^{-1} T_{\sigma(i)+1}^{-1} = T_{\sigma(i)}T_{\tau(i-1)} \\
        &\;= (T_{\sigma(i-1)-1}T_{i-1})^{-1} = T_{i-1}^{-1} T_{\sigma(i-1)-1}^{-1} = T_{\sigma(i-1)} T_{\tau(i)},
    \end{array}
\end{equation}
where
\begin{equation}
    \tau(i) := i+(4g-2) \quad\bmod (8g-4).
\end{equation}
Most importantly, the map $U_i$ sends $\F$ to the ``corner image'' $\F_i := U_i \F$ that touches $\F$ only at the vertex $V_i$ (e.g., Figure~\ref{fig first return} on page \pageref{fig first return} shows $\F_2$ in gray). Note that $\{T_i\}$ and $\{U_i\}$ are both closed under inversion; specifically, $T_i^{-1} = T_{\sigma(i)}$ via \eqref{r11} and
\[ U_i^{-1} = U_{\tau(i)}, \]
which can be shown using \eqref{r11} and Lemma~\ref{tau properties} below. Additionally, $U_i^{-1}$ maps $A_i$ to the end of the cycle (\ref{eq:sc}), hence the two main expressions in (\ref{eq:Ui}).

\medskip Lastly, we will need the following properties of $\tau$ and $\sigma$:
\begin{samepage}
    \begin{lem}
        \label{tau properties} If $\theta(i) := \sigma(i)-1$ and $\rho(i) := \sigma(i)+1$, then
        \begin{enumerate}
            \item $\theta^2(i) = \rho^2(i) = \tau(i)$, \item $\rho(i+1) = \theta(i-1) = \tau(\sigma(i))$, \item $\sigma$ and $\tau$ commute.
        \end{enumerate}
    \end{lem}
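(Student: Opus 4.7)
The plan is to verify each of the three items by direct computation modulo $N := 8g-4$, with a case split on the parity of $i$. The single observation that makes the case analysis clean is that $\sigma$ preserves parity: since $4g$ and $2$ are both even, $\sigma(i) = 4g-i$ is odd whenever $i$ is odd and $\sigma(i) = 2-i$ is even whenever $i$ is even. Consequently $\rho = \sigma+1$ and $\theta = \sigma-1$ each flip parity, so $\rho^2$ and $\theta^2$ preserve parity, in agreement with the fact that $\tau(i) = i + (4g-2)$ also preserves parity (because $4g-2$ is even).

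For (1), I would compute $\rho^2$ and $\theta^2$ separately in each parity case. For instance, when $i$ is odd, $\rho(i) = 4g - i + 1$ is even, hence $\rho^2(i) = \sigma(4g-i+1) + 1 = (2 - (4g-i+1)) + 1 = i - (4g-2) \equiv i + (4g-2) = \tau(i) \pmod N$. The remaining three sub-cases (even $i$ for $\rho^2$, both parities for $\theta^2$) reduce to analogous one-line calculations.

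For (2), I would evaluate each of $\rho(i+1)$, $\theta(i-1)$, and $\tau(\sigma(i))$ in both parity cases; when $i$ is odd all three simplify to $2-i$, and when $i$ is even all three simplify to $4g-i$. For (3), it suffices to compare $\sigma(\tau(i)) = \sigma(i + 4g - 2)$ with $\tau(\sigma(i)) = \sigma(i) + 4g - 2$; since $\tau$ preserves parity, the same branch of the definition of $\sigma$ applies at $i$ and at $\tau(i)$, so the identity reduces to one arithmetic check per parity case.

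There is no real obstacle — the lemma is a routine exercise in parity-based case analysis and modular arithmetic. The only care needed is to reduce intermediate negative quantities consistently modulo $N = 8g-4$ (noting in particular that $-(4g-2) \equiv 4g-2 \pmod N$, which is what converts the minus sign appearing in the $\rho^2$ computation above into the desired $+ (4g-2)$).
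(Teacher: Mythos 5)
Your proposal is correct and follows essentially the same route as the paper: a direct computation modulo $8g-4$ with a case split on the parity of $i$, using the fact that $\sigma$ preserves parity (the paper's proof writes out one representative instance, $\rho(i+1)=\tau(\sigma(i))$ for even $i$, and declares the rest similar). Your explicit remark that $\sigma$ preserves parity, hence $\rho^2$, $\theta^2$, and $\tau$ all preserve parity, is a clean way to organize the sub-cases, but it is the same calculation.
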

\end{samepage}

\begin{proof}
    These can be verified by direct calculation. As one instance, we show that for even~$i$, $\rho(i+1) = \tau(\sigma(i))$. In this case
    \begin{align*}
        \sigma(i) &= 2-i \qquad\text{($i$ is even)} \\
        \tau(\sigma(i)) &= 2-i+(4g-2) = 4g-i \\
        \sigma(i+1) &= 4g-(i+1) \qquad\text{($i+1$ is odd)} \\
        \rho(i+1) &= 4g-(i+1)+1 = 4g-i
    \end{align*}
    The other equalities are shown similarly.
\end{proof}
Note that $\tau(i-1) = \tau(i)-1$ because $\tau$ is a shift; these two forms are used interchangeably in several equations. In general, $\sigma(i-1) \ne \sigma(i)-1$.

\medskip For the case $\A = \P$, Adler and Flatto \cite[Sec.~5]{AF91} explicitly describe a~conjugacy between $F_G$ and $F_\P$. They define the function $\Phi : \Omega_G \to \Omega_\P$ given by
\[ \Phi = \left\{ \begin{array}{ll}
    \Id & \text{on } \Os \\
    U_{\tau(i)+1} & \text{on } \Bs_i,
\end{array} \right. \]
where $\Os = \Omega_G \cap \Omega_\P$ and the set $\Bs_i$ is the ``bulge'' that is the part of $\Omega_G \setminus \Omega_\P$ with $w \in [P_i,P_{i+1}]$. They then show that the diagram
\[ \commutativeDiagram{\Omega_G}{F_G}{\Omega_G}{\Phi}{\Phi}{\Omega_\P}{F_\P}{\Omega_\P} \]
commutes. Note that for $\A = \P$, the bulges (comprising all points in \mbox{$\Omega_G \setminus \Omega_\P$}) are affixed only to the lower part of the rectilinear set $\Omega_\P$.

For the case of a generic $\A$ with the short cycle property, the set \mbox{$\Omega_G \setminus \Omega_\A$} has pieces affixed to both the upper and lower parts of $\Omega_\A$. Thus we must define both upper and lower bulges: 
\begin{equation} \label{def of bulges}
    \begin{split}
        \text{\em lower bulge } \Bs_i &= \{(u,w) \in \Omega_G \setminus \Omega_\A : u \in [Q_{i+1},Q_{i+2}] \}; \\
        \text{\em upper bulge } \Bs^i &= \{(u,w) \in \Omega_G \setminus \Omega_\A : u \in [P_{i-1},P_i]\}.
    \end{split}
\end{equation}
We also define corners, which comprise $\Omega_\A \setminus \Omega_G$:
\begin{equation} \label{def of corners}
    \begin{split}
        \text{\em lower corner } \Cs_i &= \{(u,w) \in \Omega_\A \setminus \Omega_G : u \in [Q_{i+1},Q_{i+2}] \}; \hspace*{2.55em} \\
        \text{\em upper corner } \Cs^i &= \{(u,w) \in \Omega_\A \setminus \Omega_G : u \in [P_{i-1},P_i]\}.
    \end{split}
\end{equation}
Figure~\ref{fig bulges and corners} shows all four of these sets for a single~$i$.
\begin{remark}
    The curvilinear and rectilinear sets of Adler and Flatto include one boundary but not the other (see \cite[Fig.\;4.7, 5.1]{AF91}). Our convention is that $\Omega_G$ and $\Omega_\A$ are closed, so for us each bulge includes its curved boundary but not its straight boundaries and each corner includes its straight boundaries but not its curved boundary. This does not affect the overall dynamics in any significant way.
\end{remark}

\begin{figure}[ht]
	\includegraphics{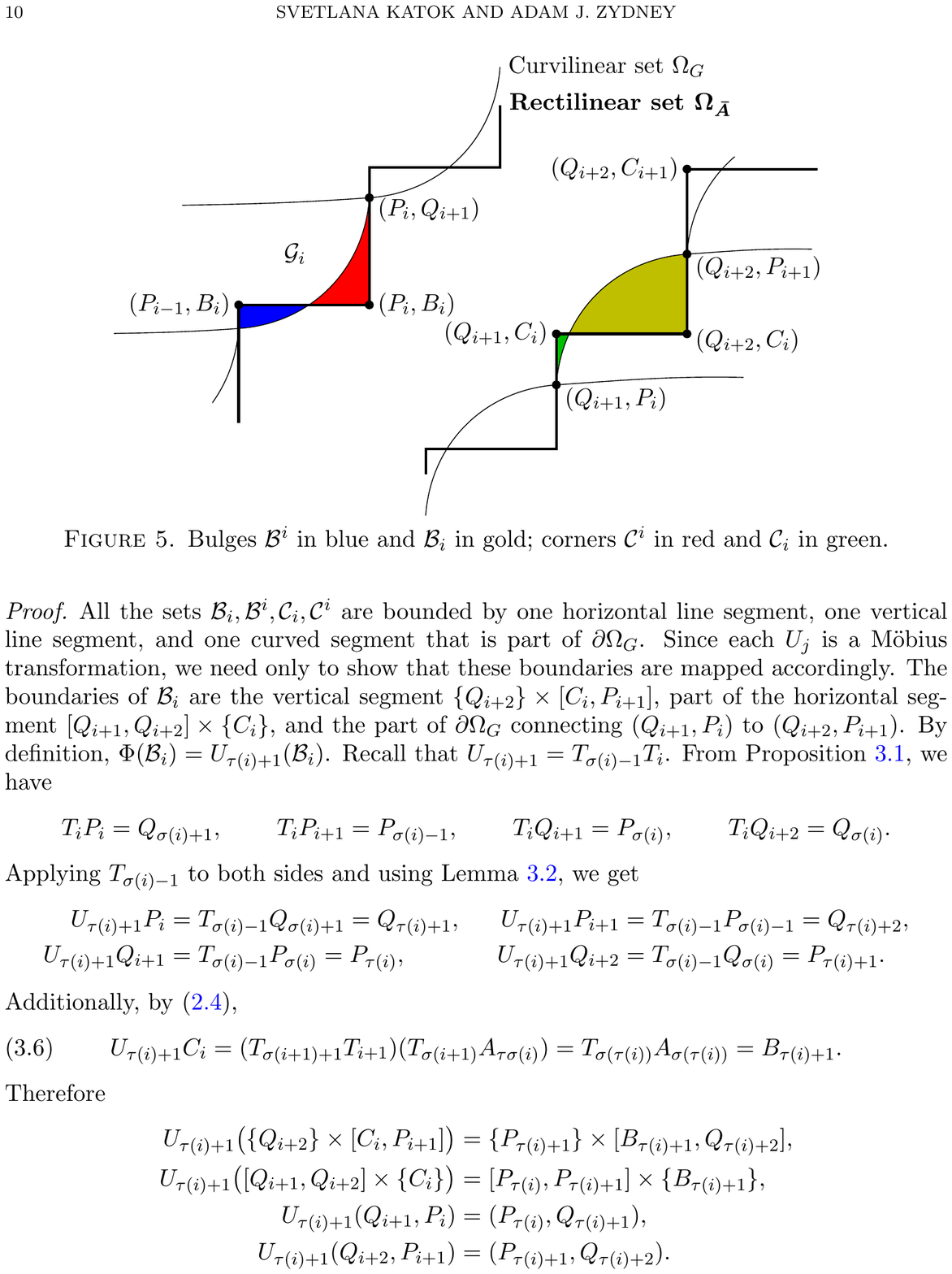}
	\caption{Bulges $\Bs^i$ in blue and $\Bs_i$ in gold; corners $\Cs^i$ in red and $\Cs_i$ in green.}
	\label{fig bulges and corners}
\end{figure}

Now let $\Os := \Omega_G \cap \Omega_\A$ and define the map $\Phi$ with domain $\Omega_G$ as
\begin{equation} \label{Phi}
    \Phi = \left\{
    \begin{array}{ll}
        \Id & \text{on } \Os \\
        U_{\tau(i)+1} & \text{on } \Bs_i \\
        U_{\tau(i)} & \text{on } \Bs^i.
    \end{array}
    \right.
\end{equation}

\begin{prop}
    \label{bulges map to corners} Let $\A$ have the short cycle property. Then the map $\Phi$ is a bijection from $\Omega_G$ to $\Omega_\A$. Specifically, $\Phi(\Bs_i) = \Cs^{\tau(i)+1}$ and $\Phi(\Bs^i) = \Cs_{\tau(i)-1}$.
\end{prop}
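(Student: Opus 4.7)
My plan is to reduce bijectivity to the two stated index identifications $\Phi(\Bs_i)=\Cs^{\tau(i)+1}$ and $\Phi(\Bs^i)=\Cs_{\tau(i)-1}$; granted these, bijectivity of $\Phi:\Omega_G\to\Omega_\A$ is immediate. Indeed, $\Phi$ is the identity on $\Os=\Omega_G\cap\Omega_\A$, the definitions \eqref{def of bulges}--\eqref{def of corners} give essentially-disjoint decompositions $\Omega_G=\Os\cup\bigcup_i(\Bs_i\cup\Bs^i)$ and $\Omega_\A=\Os\cup\bigcup_i(\Cs_i\cup\Cs^i)$, and both $i\mapsto\tau(i)+1$ and $i\mapsto\tau(i)-1$ are permutations of $\{1,\dots,8g-4\}$ by Lemma~\ref{tau properties} (in particular since $\tau^2=\Id$). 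So only the displayed identities require real work.

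To prove $\Phi(\Bs_i)=\Cs^{\tau(i)+1}$, I would describe each set as a region bounded by one curvilinear arc inherited from $\partial\Omega_G$ together with two straight segments inherited from $\partial\Omega_\A$ and identify its three distinguished vertices (the two endpoints of the arc and the inner corner where the two straight segments meet). The inner corner of $\Cs^{\tau(i)+1}$ is $(P_{\tau(i)+1},B_{\tau(i)+1})$ by \eqref{B and C}. I would then apply $U_{\tau(i)+1}$ to each vertex of $\Bs_i$, using the various factorizations in \eqref{eq:Ui} together with Proposition~\ref{T images} to show it lands on the corresponding vertex of $\Cs^{\tau(i)+1}$. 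Since $U_{\tau(i)+1}$ acts diagonally on $\Sb\times\Sb$, a M\"obius element of $\G$, it carries geodesic-tangency arcs of $\partial\Omega_G$ to geodesic-tangency arcs of $\partial\Omega_G$ and vertical/horizontal segments to vertical/horizontal segments; combined with the vertex matching and bijectivity of $U_{\tau(i)+1}$ on $\Sb\times\Sb\setminus\Delta$, this forces the entire region $\Bs_i$ to map onto $\Cs^{\tau(i)+1}$. The argument for upper bulges, using $U_{\tau(i)}$ and its own factorizations, is completely parallel.

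The main technical obstacle, and the precise point where the short cycle hypothesis on $\A$ is indispensable, is aligning the rectilinear boundaries: the flat (horizontal) top of $\Bs_i$ sits at the $\A$-level $y=A_{i+1}$, and one must verify that $U_{\tau(i)+1}$ sends it exactly to the $\A$-level $y=B_{\tau(i)+1}=T_{\sigma(\tau(i))}A_{\sigma(\tau(i))}$ bounding $\Cs^{\tau(i)+1}$. Unwinding the definition of $U_{\tau(i)+1}$ via \eqref{eq:Ui} reduces this equality of levels to the one-step coincidence of the forward orbits of $T_{i+1}A_{i+1}$ and $T_iA_{i+1}$ under $f_\A$, which is exactly the short cycle property \eqref{eq:sc} at $A_{i+1}$. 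The analogous level identity at the bottom/side of the bulge comes from applying \eqref{eq:sc} at the adjacent partition point, and the same mechanism handles the upper-bulge case. Without the short cycle property, these orbits would coincide only after several iterations and no single M\"obius element could transport the rectilinear boundary in one step, so the conjugacy $\Phi$ of \eqref{Phi} would not exist in this simple piecewise form.
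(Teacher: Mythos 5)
Your overall strategy is the same as the paper's: decompose $\Omega_G$ and $\Omega_\A$ into $\Os$ plus bulges and corners, and verify $\Phi(\Bs_i)=\Cs^{\tau(i)+1}$ and $\Phi(\Bs^i)=\Cs_{\tau(i)-1}$ by mapping the three boundary pieces of each bulge (one arc of $\partial\Omega_G$, one horizontal and one vertical segment of $\partial\Omega_\A$) and matching distinguished points via Proposition~\ref{T images} and \eqref{eq:Ui}. However, the step you single out as ``the main technical obstacle'' is set up incorrectly. The horizontal boundary of the lower bulge $\Bs_i$ is \emph{not} at the level $y=A_{i+1}$: the bulges are pieces of $\Omega_G\setminus\Omega_\A$, so their straight sides lie on $\partial\Omega_\A$, whose horizontal segments sit at the levels $B_j$ and $C_j$ of \eqref{B and C}, never at partition points $A_j$. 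Concretely, $\Bs_i$ is bounded by part of $[Q_{i+1},Q_{i+2}]\times\{C_i\}$, by $\{Q_{i+2}\}\times[C_i,P_{i+1}]$, and by the arc of $\partial\Omega_G$ from $(Q_{i+1},P_i)$ to $(Q_{i+2},P_{i+1})$. The identity actually needed is $U_{\tau(i)+1}C_i=B_{\tau(i)+1}$, which is the paper's \eqref{UBC} and follows purely from $T_{i+1}T_{\sigma(i+1)}=\Id$, Lemma~\ref{tau properties}, and the definitions \eqref{B and C}. The identity you propose to verify instead, $U_{\tau(i)+1}A_{i+1}=B_{\tau(i)+1}$, unwinds to $T_{i+1}A_{i+1}=A_{\sigma(i+1)+1}$, which is false in general; carrying out your plan literally would therefore fail at exactly the point you flag as crucial.

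Relatedly, your localization of where the short cycle hypothesis enters is off. Once the levels of $\Omega_\A$ are known to be given by \eqref{B and C}, the boundary-matching is a formal computation with the group relations; the short cycle property is used upstream, in the theorem of \cite{KU17} guaranteeing that $\Omega_\A$ has finite rectangular structure with precisely those levels (and in the fact that the two factorizations in \eqref{eq:Ui} send $A_j$ to the common cycle end). The remainder of your argument --- the essentially disjoint decompositions, the observation that $i\mapsto\tau(i)\pm1$ are permutations, and the fact that a diagonally acting M\"obius element carries arcs of $\partial\Omega_G$ to arcs of $\partial\Omega_G$ and horizontal/vertical segments to horizontal/vertical segments --- is sound and matches the paper's final assembly of the bijection.
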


\begin{proof}
    All the sets $\Bs_i, \Bs^i, \Cs_i, \Cs^i$ are bounded by one horizontal line segment, one vertical line segment, and one curved segment that is part of $\partial \Omega_G$. Since each $U_j$ is a M\"obius transformation, we need only to show that these boundaries are mapped accordingly. The boundaries of $\Bs_i$ are the vertical segment $\{Q_{i+2}\} \times [C_i,P_{i+1}]$, part of the horizontal segment $[Q_{i+1},Q_{i+2}] \times \{C_i\}$, and the part of $\partial \Omega_G$ connecting $(Q_{i+1},P_i)$ to $(Q_{i+2},P_{i+1})$. By definition, $\Phi(\Bs_i) = U_{\tau(i)+1}(\Bs_i)$. Recall that $U_{\tau(i)+1} = T_{\sigma(i)-1}T_i$. From Proposition~\ref{T images}, we have
    \begin{align*}
        T_i P_i &= Q_{\sigma(i)+1}, & T_i P_{i+1} &= P_{\sigma(i)-1}, & T_i Q_{i+1} &= P_{\sigma(i)}, & T_i Q_{i+2} &= Q_{\sigma(i)}.
    \end{align*}
    Applying $T_{\sigma(i)-1}$ to both sides and using Lemma~\ref{tau properties}, we get
    \begin{align*}
        U_{\tau(i)+1} P_i &= T_{\sigma(i)-1} Q_{\sigma(i)+1} = Q_{\tau(i)+1}, \\
        U_{\tau(i)+1} P_{i+1} &= T_{\sigma(i)-1} P_{\sigma(i)-1} = Q_{\tau(i)+2},\\
        U_{\tau(i)+1} Q_{i+1} &= T_{\sigma(i)-1} P_{\sigma(i)} = P_{\tau(i)}, \\
        U_{\tau(i)+1} Q_{i+2} &= T_{\sigma(i)-1} Q_{\sigma(i)} = P_{\tau(i)+1}.
    \end{align*}
    Additionally, by \eqref{B and C},
    \begin{equation}
        \label{UBC} U_{\tau(i)+1} C_i = (T_{\sigma(i+1)+1}T_{i+1}) (T_{\sigma(i+1)}A_{\tau\sigma(i)}) = T_{\sigma(\tau(i))} A_{\sigma(\tau(i))} = B_{\tau(i)+1}.
    \end{equation}
    Therefore
    \begin{align*}
        U_{\tau(i)+1}\big( \{Q_{i+2}\} \times [C_i,P_{i+1}] \big) &= \{P_{\tau(i)+1}\} \times [B_{\tau(i)+1}, Q_{\tau(i)+2}], \\
        U_{\tau(i)+1}\big( [Q_{i+1},Q_{i+2}] \times \{C_i\} \big) &= [P_{\tau(i)},P_{\tau(i)+1}] \times \{B_{\tau(i)+1}\}, \\
        U_{\tau(i)+1} (Q_{i+1},P_i) &= (P_{\tau(i)}, Q_{\tau(i)+1}), \\
        U_{\tau(i)+1} (Q_{i+2},P_{i+1}) &= (P_{\tau(i)+1}, Q_{\tau(i)+2}).
    \end{align*}
    The corner $\Cs^{\tau(i)+1}$ is exactly the set bounded by the vertical segment $\{P_{\tau(i)+1}\} \times [B_{\tau(i)+1}$, $Q_{\tau(i)+2}]$, part of the horizontal segment $[P_{\tau(i)},P_{\tau(i)+1}] \times \{B_{\tau(i)+1}\}$, and part of segment of $\partial \Omega_G$ connecting $(P_{\tau(i)}, Q_{\tau(i)+1})$ to $(P_{\tau(i)+1}, Q_{\tau(i)+2})$. Thus $U_{\tau(i)+1}(\Bs_i) = \Cs^{\tau(i)+1}$. A similar argument shows $U_{\tau(i)}(\Bs^i) = \Cs_{\tau(i)-1}$. Taking $\Phi(\Os) = \Id(\Os) = \Os$ together with $\Phi(\Bs_i) = \Cs^{\tau(i)+1}$ and $\Phi(\Bs^i) = \Cs_{\tau(i)-1}$, we have that $\Phi(\Omega_G) = \Omega_\A$.
\end{proof}

\begin{defn}
    Let $u,w\in \Sb=\partial\D$, $u\neq w$. An oriented geodesic in $\D$ from $u$ to $w$ is called \emph{$\A$-reduced} if $(u,w)\in\Omega_\A$.
\end{defn}

\begin{cor}
    \label{anti corner} If a geodesic $\g = uw$ intersects $\F$, then either $\g$ is $\A$-reduced or $U_j \g$ is $\A$-reduced, where $j=\tau(i)$ if $(u,w) \in \Bs^i$ and $j=\tau(i)+1$ if $(u,w) \in \Bs_i$.
\end{cor}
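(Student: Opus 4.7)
The plan is to observe that this corollary is essentially a repackaging of Proposition~\ref{bulges map to corners} in the language of geodesics rather than points in $\Sb\times\Sb$. The geodesic $\gamma=uw$ intersects $\F$ precisely when $(u,w)\in\Omega_G$, since $\Omega_G$ was defined in the introduction as the set of oriented geodesics in $\D$ meeting $\F$.

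First I would decompose the hypothesis into cases according to where $(u,w)$ lies. By definition, $\Omega_G=\Os\cup\bigl(\bigcup_i \Bs_i\bigr)\cup\bigl(\bigcup_i \Bs^i\bigr)$, where $\Os=\Omega_G\cap\Omega_\A$ and the bulges are disjoint from $\Omega_\A$ by \eqref{def of bulges}. In the first case $(u,w)\in\Os\subset\Omega_\A$, so $\gamma$ is already $\A$-reduced and there is nothing to prove. In the second case $(u,w)\in\Bs_i$ for some $i$; Proposition~\ref{bulges map to corners} gives $\Phi(u,w)=U_{\tau(i)+1}(u,w)\in\Cs^{\tau(i)+1}\subset\Omega_\A$, which is exactly the statement that $U_{\tau(i)+1}\gamma$ is $\A$-reduced, matching the prescription $j=\tau(i)+1$. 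In the third case $(u,w)\in\Bs^i$; the proposition gives $U_{\tau(i)}(u,w)\in\Cs_{\tau(i)-1}\subset\Omega_\A$, so $U_{\tau(i)}\gamma$ is $\A$-reduced, matching $j=\tau(i)$.

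There is no real obstacle here, since all the hard work (verifying that the M\"obius transformations $U_{\tau(i)+1}$ and $U_{\tau(i)}$ carry the appropriate bulges onto the appropriate corners of $\Omega_\A$) has already been done in Proposition~\ref{bulges map to corners}. The only point that deserves a sentence of care is the case where $(u,w)$ lies on the shared boundary of two of the sets $\Os,\Bs_i,\Bs^i$ (for instance on the curve $\partial\Omega_G$ or on one of the vertical/horizontal segments separating them); by the closure conventions explained in the remark following \eqref{def of corners}, such a point lies in at least one of the pieces, and the conclusion of the corollary holds for any choice of piece it belongs to, since $\Phi$ agrees with $\Id$ on the relevant boundary of $\Os$ (the only ambiguity is between a reduced geodesic and its image under a $U_j$ that fixes it up to the closure convention). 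Thus the corollary follows directly.
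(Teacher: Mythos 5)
Your proposal is correct and follows essentially the same route as the paper: decompose $\Omega_G$ into $\Os$ and the bulges, then invoke Proposition~\ref{bulges map to corners} to send each bulge into the corresponding corner of $\Omega_\A$, with the same index bookkeeping for $j$. The only item the paper adds that you omit is the closing observation that $\g$ and $U_j\g$ cannot both be $\A$-reduced (i.e., the ``either/or'' is exclusive), which follows since the bulges are disjoint from $\Omega_\A$.
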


\begin{proof}
    The geodesic $\g = uw$ intersecting $\F$ is equivalent to $(u,w)$ being in the set $\Omega_G$. If $(u,w) \in \Os = \Omega_G \cap \Omega_\A$, then $\g$ is $\A$-reduced as well. If not, then $(u,w)$ is in some upper or lower bulge since the bulges comprise all of $\Omega_G \setminus \Omega_\A$. If $(u,w) \in \Bs_i$, then $U_{\tau(i)+1}(u,w) \in \Cs^{\tau(i)+1} \subset \Omega_\A$ and so $U_{\tau(i)+1}\g$ is $\A$-reduced. If $(u,w) \in \Bs^i$, then $U_{\tau(i)}(u,w) \in \Cs_{\tau(i)-1} \subset \Omega_\A$ and so $U_{\tau(i)}\g$ is $\A$-reduced.  It follows that $\g$ and $U_j\g$ cannot be $\A$-reduced simultaneously.
\end{proof}

\begin{remark}
    \label{two corner images} Note that although the geodesic $\g$ might intersect two different corner images, the point $(u,w)$ cannot be in two different bulges simultaneously. The index of the bulge $\Bs^i$ or $\Bs_i$ determines a specific $j$ for which $U_j \g$ is $\A$-reduced, as stated in Corollary~\ref{anti corner}.
\end{remark}

\begin{cor}
    \label{corner} If a geodesic $\g=uw$ is $\A$-reduced, then either $\g$ intersects $\F$, or $\g$ intersects $\F_j = U_j(\F)$, where $j=i$ if $(u,w) \in \Cs^i$ and $j=i+1$ if $(u,w) \in \Cs_i$.
\end{cor}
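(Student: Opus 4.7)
The plan is to derive this corollary directly from Proposition~\ref{bulges map to corners} by inverting the bijection $\Phi$. Since $\g = uw$ is $\A$-reduced, $(u,w) \in \Omega_\A$, and the proof splits into two cases based on whether $(u,w)$ lies in $\Os = \Omega_G \cap \Omega_\A$ or in $\Omega_\A \setminus \Omega_G$. In the first case $(u,w) \in \Omega_G$, so by the description of $\Omega_G$ as the set of geodesics intersecting $\F$, the geodesic $\g$ intersects $\F$ and there is nothing more to do.

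In the second case, by \eqref{def of corners} the point $(u,w)$ lies in a corner, either $\Cs^i$ or $\Cs_i$. Suppose first $(u,w) \in \Cs^i$. By Proposition~\ref{bulges map to corners}, $\Phi$ restricts to a bijection $U_{\tau(i')+1} \colon \Bs_{i'} \to \Cs^{\tau(i')+1}$ for each $i'$. Choosing $i'$ so that $\tau(i')+1 = i$, the preimage $(u',w') := U_i^{-1}(u,w)$ lies in $\Bs_{i'} \subset \Omega_G$, so the geodesic $\g' = u'w'$ intersects $\F$. Applying $U_i$, I conclude that $\g = U_i \g'$ intersects $U_i(\F) = \F_i$, giving $j = i$. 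The case $(u,w) \in \Cs_i$ is analogous: using $\Phi|_{\Bs^{i'}} = U_{\tau(i')}$ and choosing $i'$ so that $\tau(i')-1 = i$, the preimage $(u',w') = U_{i+1}^{-1}(u,w)$ lies in $\Bs^{i'} \subset \Omega_G$, so $\g = U_{i+1}\g'$ intersects $\F_{i+1}$, yielding $j = i+1$.

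The main obstacle is purely notational: one must verify that the index $j$ appearing in the corollary actually matches the $U$-map supplied by Proposition~\ref{bulges map to corners}. This requires a careful application of the involution $U_k^{-1} = U_{\tau(k)}$ together with the shift identity $\tau(\tau(k)+1) = k+1 \bmod (8g-4)$, both of which follow directly from Lemma~\ref{tau properties} (since $\tau$ is the shift by $4g-2$). Once this bookkeeping is in place, the bijectivity of $\Phi$ already established in Proposition~\ref{bulges map to corners} guarantees that the corner containing $(u,w)$ uniquely determines its bulge preimage, and hence uniquely determines~$j$.
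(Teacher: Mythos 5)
Your proposal is correct and takes essentially the same route as the paper: both invert the bijection of Proposition~\ref{bulges map to corners} to send each corner back to its bulge preimage in $\Omega_G$, conclude that the preimage geodesic meets $\F$, and then push forward by the appropriate $U_j$. Your index bookkeeping ($U_i^{-1}(\Cs^i)=\Bs_{\tau(i)-1}$ and $U_{i+1}^{-1}(\Cs_i)=\Bs^{\tau(i)+1}$) agrees with the statement of the corollary and is in fact written more carefully than the paper's own proof, which transposes the labels $\Cs_i$ and $\Cs^i$ in its two cases.
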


\begin{proof}
    The geodesic $\g$ is $\A$-reduced means that $(u,w)\in\Omega_{\A}$. If $(u,w) \in \Os = \Omega_G \cap \Omega_\A$, then $\g$ intersects $\F$. If not, then $(u,w)$ is in some upper or lower corner. If $(u,w)\in \Cs_i$, then $U^{-1}_i(u,w)\in \Bs_{\tau(i)-1}\subset \Omega_G$, so $U^{-1}_i(\g)$ intersects $\F$, or $\g$ intersects $U_i(\F)=\F_i$.

If $(u,w)\in \Cs^i$, then $U^{-1}_{i+1}(u,w)\in \Bs^{\tau(i)+1}$, so $U^{-1}_{i+1}(\g)$ intersects $\F$, or $\g$ intersects $U_{i+1}(\F)=\F_{j+1}$.
\end{proof}

\begin{remark}
    \label{two corner images 2} Here again, although the geodesic $\g$ might intersect two different corner images, the index of the corner $\Cs^i$ or $\Cs_i$ determines a specific $j$ for which $\g$ intersects $\F_j$, as stated in Corollary~\ref{corner}.
\end{remark}

\begin{thm}
    \label{conjugacy} Let $\A$ have the short cycle property, and let $\Phi : \Omega_G \to \Omega_\A$ be as in \eqref{Phi}. Then $\Phi$ is a conjugacy between $F_G$ and $F_\A$. That is, the following diagram commutes:
    \begin{equation}
        \label{cd:conjugacy} \commutativeDiagram{\Omega_G}{F_G}{\Omega_G}{\Phi}{\Phi}{\Omega_\A}{F_\A}{\Omega_\A}.
    \end{equation}
\end{thm}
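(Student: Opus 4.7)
The plan is to verify the commutativity $\Phi \circ F_G = F_\A \circ \Phi$ by a case analysis based on the partition of $\Omega_G$ into $\Os$, the lower bulges $\Bs_i$, and the upper bulges $\Bs^i$, coupled with a secondary partition into the strips used by $F_G$ (curvilinear slices $\Gs_j$) and by $F_\A$ (horizontal strips $\{w \in [A_k, A_{k+1})\}$). Proposition \ref{bulges map to corners} already gives bijectivity and identifies $\Phi(\Bs_i) = \Cs^{\tau(i)+1}$, $\Phi(\Bs^i) = \Cs_{\tau(i)-1}$, so what remains is to compare the two M\"obius factors on each piece.

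First I would treat the generic case $(u,w) \in \Os$, where $\Phi$ is the identity. The curvilinear partition $\{\Gs_j\}$ and the arithmetic partition $\{w \in [A_k,A_{k+1})\}$ differ only in narrow strips near each discontinuity $A_i$, and these strips are precisely the projections of the bulges. Consequently, on the portion of $\Os$ whose $F_G$-image lies again in $\Os$, the two partitions agree, $F_G$ and $F_\A$ apply the same generator $T_i$, and both sides of \eqref{cd:conjugacy} equal $T_i(u,w)$. On the portion of $\Os$ whose $F_G$-image lands in a bulge $\Bs_j$ (respectively $\Bs^j$), the indices differ by one: $F_G$ applies $T_{i}$ while $F_\A$ applies $T_{i\pm 1}$. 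Here the identity to verify collapses to $U_{\tau(j)+1} T_i = T_{i\pm 1}$ on $\Os$, which follows from the decomposition \eqref{eq:Ui} and the side-pairing relations \eqref{r11}--\eqref{r13}.

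Next I would handle $(u,w)$ in a bulge, say $(u,w) \in \Bs_i$. Then $\Phi(u,w) = U_{\tau(i)+1}(u,w) \in \Cs^{\tau(i)+1}$, and from Proposition \ref{T images}, Lemma \ref{tau properties}, and the key computation \eqref{UBC} that identifies the height $B_{\tau(i)+1}$ of the corner with $U_{\tau(i)+1}C_i$, one reads off which horizontal strip of $F_\A$ contains $\Phi(u,w)$ and hence which generator $T_m$ gets applied by $F_\A$. On the other hand, the exit side of the geodesic $uw$ through $\F$ is determined by which side of the bulge's curved boundary $(u,w)$ sits; for a typical bulge point this is a specific $T_{i}$ or $T_{i+1}$, and then $F_G(u,w)$ lands in the corresponding $\Os$-region of $\Omega_G$ (or another bulge), to which $\Phi$ is applied. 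Equality of the two sides reduces to a M\"obius identity of the form $T_m \circ U_{\tau(i)+1} = U_{j'} \circ T_{i'}$ (with the right-hand $U_{j'}$ replaced by the identity when the image lies in $\Os$), and this is where the \emph{short cycle property} $f_\A(T_iA_i) = f_\A(T_{i-1}A_i)$ is used in an essential way: it guarantees precisely the one-step closing of the cycle that makes these compositions collapse, as already exploited in \eqref{UBC}. The case $(u,w) \in \Bs^i$ is fully symmetric, using $U_{\tau(i)}$ and the dual relations in Lemma \ref{tau properties}.

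I expect the main obstacle to be not any single hard step but the bookkeeping: correctly enumerating the sub-regions of each $\Bs_i$, $\Bs^i$, and $\Os$ according to which $T_i$ is applied by $F_G$ and which $T_k$ by $F_\A$, and tracking the indices $\sigma(i), \tau(i), \rho(i)$ through each identity. Fortunately, the geometric content has already been extracted in Proposition \ref{bulges map to corners}, which pairs each bulge with a unique corner via a specific $U_j$; once this pairing is fixed, every case reduces to a short algebraic manipulation using \eqref{r11}--\eqref{r13}, Proposition \ref{T images}, Lemma \ref{tau properties}, and the one-step identity \eqref{UBC}.
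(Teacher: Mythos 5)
Your overall strategy is the same as the paper's: partition the domain into $\Os$ and the bulges (the paper equivalently works backward from $\Omega_\A$, verifying $F_\A^{-1}\circ\Phi\circ F_G\circ\Phi^{-1}=\Id$ on $\Os$ and on the corners), and on each piece reconcile the generator applied by $F_G$ with the one applied by $F_\A$ using \eqref{eq:Ui}, Proposition~\ref{T images}, Lemma~\ref{tau properties}, and \eqref{UBC}. However, your plan has a genuine gap: you treat every mismatch as ``a short algebraic manipulation,'' but the algebra is the easy half. When $F_G$ applies $T_i$ and $F_\A$ applies $T_{i\pm1}$, writing down the identity $U_j T_i = T_{i\pm1}$ only tells you which $U_j$ you \emph{need} $\Phi$ to apply at the image point; you must separately prove that the image actually lies in the specific bulge (or corner) whose index makes \eqref{Phi} produce that particular $U_j$, since $\Phi$ is defined piecewise by bulge index, not by what would make the diagram commute. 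The paper does this with genuinely geometric arguments: it shows the geodesic $F_\A\g$ intersects the corner image $\F_{\sigma(i-1)}$ (resp.\ $\F_{\sigma\tau(i)}$), and in its Case~3 it needs a hyperbolicity argument --- the image of the upper tip of the corner under the expanding/contracting map $T_{i+1}$ lands strictly inside the corresponding striped region near the vertex $(P_{\tau\sigma(i)},Q_{\tau\sigma(i)+1})$. Your proposal never supplies this step.

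The second missing ingredient is that your case enumeration is not consistent for free: on an upper corner $\Cs^i$ the pulled-back geodesic $\g'=U_i^{-1}\g$ may exit $\F$ through side $\tau(i)-1$ or $\tau(i)-2$, while $F_\A$ may act by $T_i$ or $T_{i+1}$, and one of the four combinations ($F_\A\g=T_{i+1}\g$ together with exit through side $\tau(i)-2$) admits no reconciling M\"obius identity at all. The paper must prove this combination is \emph{impossible}, by locating $w$ in the interval $I=[B_i,\,U_iQ_{\tau(i)-1}]\subset(A_i,A_{i+1})$ via Proposition~\ref{T images} and deriving a contradiction with $F_\A\g=T_{i+1}\g$. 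Without an argument of this kind your case analysis does not close, because an unexcluded bad case would be a counterexample to commutativity rather than a bookkeeping nuisance. So the skeleton of your plan is right, but the two places where the short cycle property and the geometry of $\Omega_G$ enter essentially --- corner images being hit by the correct image geodesics, and the exclusion of the incompatible case --- are exactly the places your outline waves at rather than proves.
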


\begin{proof}
    The diagram commuting is equivalent to
    \begin{equation}
        \label{commutes} F_\A^{-1} \circ \Phi \circ F_G \circ \Phi^{-1} = \Id.
    \end{equation}
    Let $(u,w)$ be any point in $\Omega_\A$; we aim to prove that \eqref{commutes} is true for $(u,w)$ by building out the diagram
    \[ \commutativeDiagram{\quad}{F_G}{\quad}{\Phi}{\Phi}{(u,w)}{F_\A}{\quad}. \]
    If $(u,w) \in \Os = \Omega_G \cap \Omega_\A$, then $\Phi^{-1}(u,w) = (u,w)$. Let $i$ be the side through which the geodesic $\g = uw$ exits $\F$; then $F_G(u,w) = T_i(u,w)$. At this point \eqref{commutes} is
    \[ F_\A^{-1} \circ \Phi \circ (T_i) \circ (\Id)^{-1} = \Id, \]
    corresponding to
    \[ \commutativeDiagram{(u,w)}{F_G = T_i}{\quad}{\Phi = \Id}{\Phi}{(u,w)}{F_\A}{\quad}. \]
    Since $\g = uw$ exits $\F$ through side $i$, the point $w$ must be in $[P_i,Q_{i+1}]$, which means that $F_\A$ must act by $T_{i-1}$, $T_i$, or $T_{i+1}$ (see Figure~\ref{fig side exit}).
    \begin{figure}[ht]
        \includegraphics{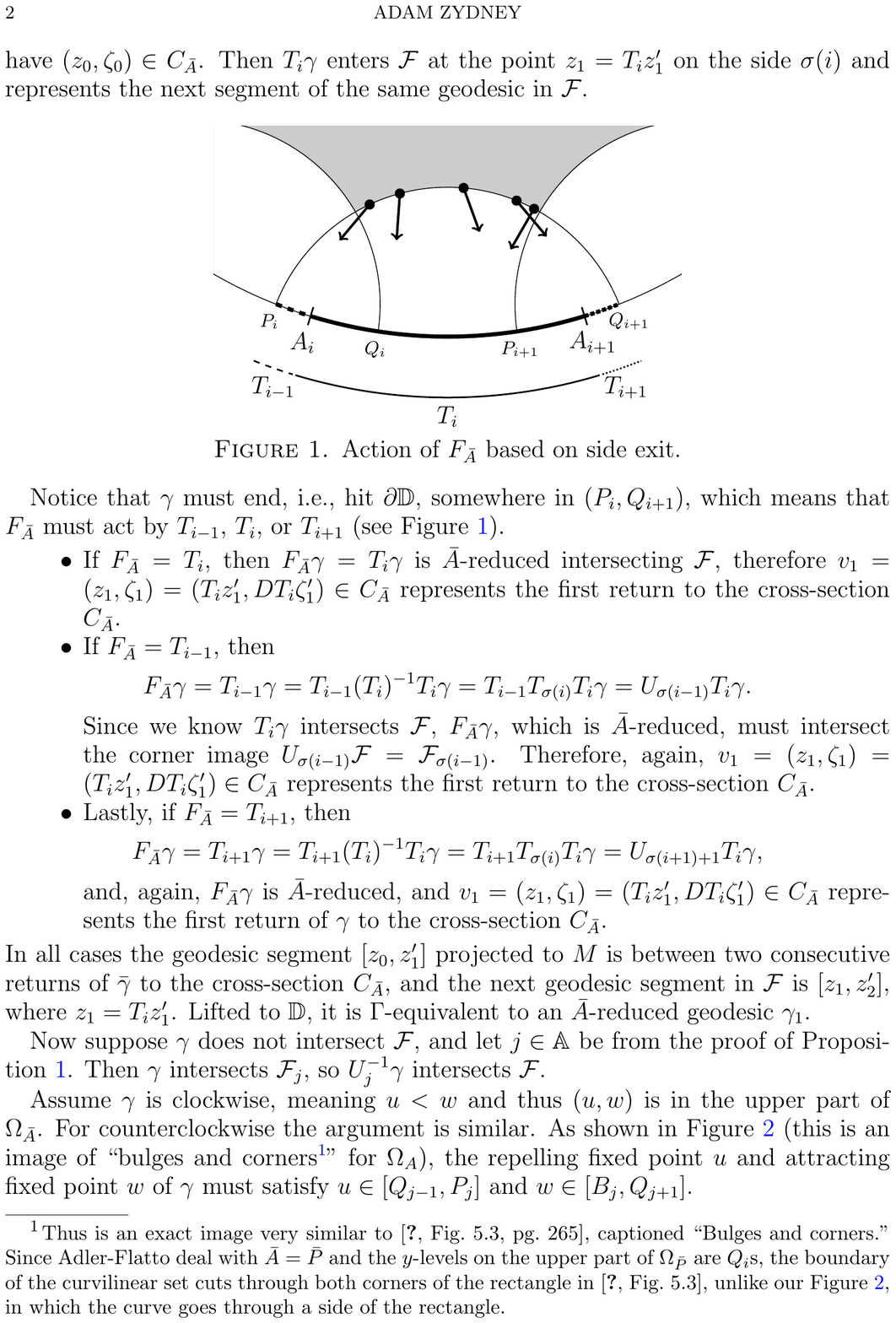}
    	\caption{Action of $F_\A$ based on side exit.}
    	\label{fig side exit}
    \end{figure}
    
    \begin{itemize}
        \item If $F_\A = T_i$, then $F_\A \g = T_i \g$ is both $\A$-reduced and intersects $\F$. Thus $\Phi$ acts on $T_i \g$ as the identity, and \eqref{commutes} becomes
        \[ (T_i)^{-1} \circ (\Id) \circ (T_i) \circ (\Id)^{-1} = \Id. \]
        \item If $F_\A = T_{i-1}$, then \eqref{commutes} becomes
        \[ (T_{i-1})^{-1} \circ \Phi \circ (T_i) \circ (\Id)^{-1} = \Id, \]
        which is true if and only if $\Phi = T_{i-1}T_i^{-1}$, or equivalently, by (\ref{eq:Ui}), $\Phi = U_{\sigma(i-1)}$. Since we know $T_i\g$ intersects $\F$, the geodesic
        \[ F_\A\g = T_{i-1}\g = T_{i-1}T_i^{-1} (T_i\g) = U_{\sigma(i-1)}(T_i \g) \]
        must intersect the corner image $\F_{\sigma(i-1)}$. Therefore $\Phi$ does act by $U_{\sigma(i-1)}$, and \eqref{commutes} is true in this case. \item If $F_\A = T_{i+1}$, then \eqref{commutes} becomes
        \[ (T_{i+1})^{-1} \circ \Phi \circ (T_i) \circ (\Id)^{-1} = \Id, \]
        which is true if and only if $\Phi = T_{i+1}T_i^{-1}$. Similar to the previous case, the geodesic
        \[ F_\A\g = T_{i+1}\g = T_{i+1}T_i^{-1}(T_i\g) = U_{\sigma\tau(i)}(T_i\g) \]
        must intersect $F_{\sigma\tau(i)}$, so $\Phi$ acts by $U_{\sigma\tau(i)} = T_{i+1}T_i^{-1}$ and \eqref{commutes} is true.
    \end{itemize}
    If $(u,w) = \Omega_\A \setminus \Os$, then it is in an upper or lower corner. Assume $(u,w)$ is in an upper corner, and let $i$ be such that $(u,w) \in \Cs^i$. Since, by Lemma~\ref{bulges map to corners}, $U_i(\Bs_{\tau(i)-1}) = \Cs^i$, we know that $\Phi^{-1}$ acts on $\Cs^i$ by $U_i^{-1}$. Defining $\g' = u'w' = U_i^{-1}\g$, we have the partial diagram
    \[ \commutativeDiagram{(u',w')}{F_G}{\quad}{\Phi=U_i}{\Phi}{(u,w)}{F_\A}{\quad} \]
    corresponding to
    \[ F_\A^{-1} \circ \Phi \circ F_G \circ (U_i)^{-1} = \Id \]
    at this point.

	Since $\Cs^i \subset [P_{i-1},P_i] \times [B_i,Q_{i+1}]$, we have 
    \begin{equation}
        \label{Uinv i endpoints}
        \begin{split}
            u' &\in U_i^{-1}[P_{i-1},P_i] = [Q_{\tau(i)}, Q_{\tau(i)+1}], \\
            w' &\in U_i^{-1}[B_i,Q_{i+1}] = [C_{\tau(i)-1}, P_{\tau(i)}]
        \end{split}
    \end{equation}
    based on Proposition~\ref{T images} and equation~\eqref{UBC}. In most instances $\g'$ will exit $\F$ through side $\tau(i)-1$, but if $C_{\tau(i)-1}$ is very close to $P_{\tau(i)-1}$ it is possible for $\g'$ to exit through side $\tau(i)-2$ instead. Thus $F_G$ might act on $(u',w')$ by $T_{\tau(i)-1}$ or $T_{\tau(i)-2}$.

	\providecommand\Case[1]{\medskip\textbf{Case #1.}}
	The remainder of the proof is broken down into four cases:
    \begin{samepage}
        \begin{enumerate}[\qquad\bf{Case }1:]
            \item $F_\A \g = T_i \g$ and $F_G \g' = T_{\tau(i)-1} \g'$.
            \item $F_\A \g = T_i \g$ and $F_G \g' = T_{\tau(i)-2} \g'$.
            \item $F_\A \g = T_{i+1} \g$ and $F_G \g' = T_{\tau(i)-1} \g'$.
            \item $F_\A \g = T_{i+1} \g$ and $F_G \g' = T_{\tau(i)-2} \g'$.
        \end{enumerate}
    \end{samepage}
    For cases 1 and 2, $w \in [B_i,A_{i+1})$, while for 3 and 4 we have $w \in [A_{i+1},Q_{i+1}]$. For cases 1 and 3, $\g'$ exits through side $\tau(i)-1$, so the geometrically next segment of $\bar\gamma$ in $\F$ starts at $z_1 := T_{\tau(i)-1}z_1'$ and is part of the geodesic
    \begin{equation}
        \label{side exit simplifies} T_{\tau(i)-1} \g' = T_{\tau(i)-1} U_i^{-1} \g = T_{\tau(i)-1} T_{\sigma(i)+1} T_i \g = T_{\tau(i)-1} T_{\sigma(\tau(i)-1)} T_i \g = T_i \g.
    \end{equation}
    
    \Case{1} In this case, $F_\A \g = T_i \g$ and, as shown in \eqref{side exit simplifies} above, $F_G \g' = T_i \g$ as well.

	\Case{2} Here $\g'$ exits $\F$ through side $\tau(i)-2$, so the geometrically next segment of $\bar\gamma$ in $\F$ is part of $T_{\tau(i)-2} \g'$. The expression $T_{\tau(i)-2}U_i^{-1}$ does not simplify as easily as $T_{\tau(i)-1}U_i^{-1} = T_i$ did. However, we do have
    \begin{equation}
        \label{non-simplification} T_{\tau(i)-2} U_i^{-1} = T_{\tau(i)-2} T_{\rho(i)} T_i = T_{\sigma(\rho(i))-1} T_{\rho(i)} T_i = U_{\rho(j)+1}^{-1} T_j.
    \end{equation}
    The geodesic $T_{\tau(i)-2} \g'$ must intersect $\F$. This is equivalent to $U_{\rho(i)+1} T_{\tau(i)-2} \g'$ intersecting $U_{\rho(i)+1} \F = \F_{\rho(i)+1}$. Since
    \[ U_{\rho(i)+1} T_{\tau(i)-2} \g' = U_{\rho(i)+1} (T_{\tau(i)-2} U_i^{-1}) \g = U_{\rho(i)+1} (U_{\rho(i)+1}^{-1} T_i) \g = T_i \g, \]
    the geodesic $T_i \g = F_\A \g$ must intersect $\F_{\rho(i)+1}$. Thus when we ``pull back'' $F_\A \g = T_i\g$ to $\F$, we get $U_{\rho(i)+1}^{-1} T_i \g$, which by \eqref{non-simplification} is exactly $T_{\tau(i)-2}U_i^{-1} \g = F_G \g'$.

    \Case{3} Here $\g'$ exits $\F$ through side $\tau(i)-1$, so $F_G \g' = T_i \g$ by \eqref{side exit simplifies}. However, $F_\A \g = T_{i+1}\g$ instead of $T_i\g$.
        
    \begin{figure}[ht]
    	\includegraphics{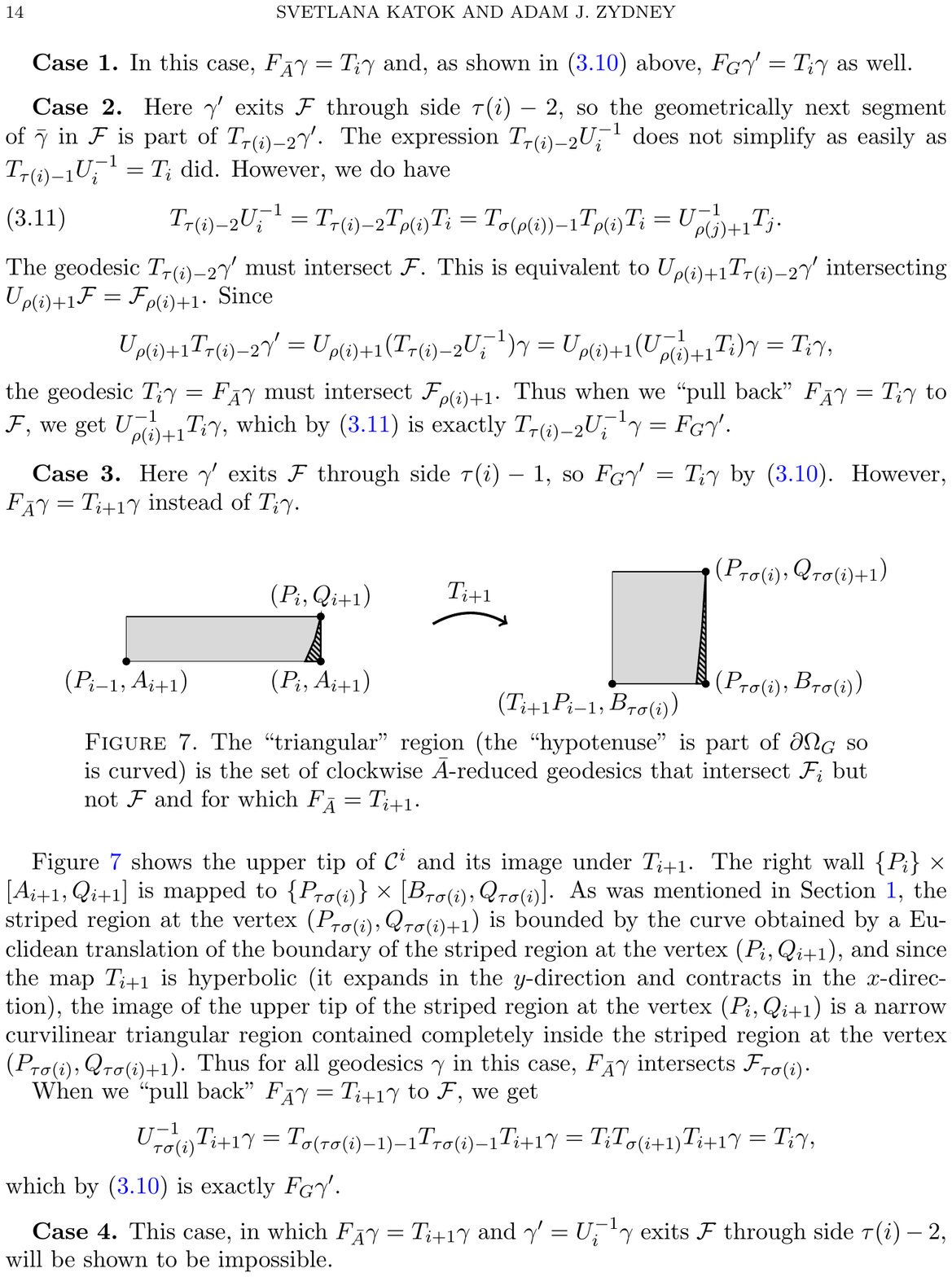}
    	\caption{The ``triangular'' region (the ``hypotenuse'' is part of $\partial\Omega_G$ so is curved) is the set of clockwise $\A$-reduced geodesics that intersect $\F_i$ but not~$\F$ and for which $F_\A = T_{i+1}$.}
    	\label{fig triangular tip}
    \end{figure}
    
    Figure~\ref{fig triangular tip} shows the upper tip of $\Cs^i$ and its image under $T_{i+1}$. The right wall $\{P_i\}\times[A_{i+1},Q_{i+1}]$ is mapped to $\{P_{\tau\sigma(i)}\} \times [B_{\tau\sigma(i)},Q_{\tau\sigma(i)}]$. As was mentioned in Section~\ref{sec introduction}, the striped region at the vertex $(P_{\tau\sigma(i)}, Q_{\tau\sigma(i)+1})$ is bounded by the curve obtained by a Euclidean translation of the boundary of the striped region at the vertex $(P_i, Q_{i+1})$, and since the map $T_{i+1}$ is hyperbolic (it expands in the $y$-direction and contracts in the $x$-direc\-tion), the image of the upper tip of the striped region at the vertex $(P_i, Q_{i+1})$ is a narrow curvilinear triangular region contained completely inside the striped region at the vertex $(P_{\tau\sigma(i)}, Q_{\tau\sigma(i)+1})$. Thus for all geodesics $\g$ in this case, $F_\A \g$ intersects $\F_{\tau\sigma(i)}$.

	When we ``pull back'' $F_\A \g = T_{i+1}\g$ to $\F$, we get
    \[ \label{non-simplification2} U_{\tau\sigma(i)}^{-1} T_{i+1} \g = T_{\sigma(\tau\sigma(i)-1)-1} T_{\tau\sigma(i)-1} T_{i+1} \g = T_i T_{\sigma(i+1)} T_{i+1} \g = T_i \g, \]
    which by \eqref{side exit simplifies} is exactly $F_G \g'$.

\Case{4} This case, in which $F_\A \g = T_{i+1}\g$ and $\g' = U_i^{-1}\g$ exits $\F$ through side $\tau(i)-2$, will be shown to be impossible.

Since the curvilinear horizontal slice $\Gs_k$, that is, the set of all geodesics exiting $\F$ through side $k$, is contained in the horizontal strip $\Sb \times [P_k,Q_{k+1}]$, the endpoint $w'$ of $\g'$ must be in $[P_{\tau(i)-2},Q_{\tau(i)-1}]$. Since $w' \in [C_{\tau(i)-1},P_{\tau(i)}]$ by \eqref{Uinv i endpoints}, we have
    \[ w' \in [P_{\tau(i)-2},Q_{\tau(i)-1}] \cap [C_{\tau(i)-1},P_{\tau(i)}] = [C_{\tau(i)-1},Q_{\tau(i)-1}] \]
    (in fact, $w'$ will be quite close to $C_{\tau(i)-1}$). This means that the endpoint $w$ of $\g = U_i \g'$ will be in the interval
    \[ I = U_i[C_{\tau(i)-1},Q_{\tau(i)-1}] = [B_i,U_iQ_{\tau(i)-1}]. \]
    We have
    \[ U_iQ_{\tau(i)-1} = T_{\sigma(i)} T_{\tau(i)-1} Q_{\tau(i)-1} = T_{\sigma(i)} Q_{\sigma(i)+3}. \]
    In order to locate $T_{\sigma(i)} Q_{\sigma(i)+3}$, we use Proposition~\ref{T images}. $T_{\sigma(i)}$ maps the geodesic $P_{\sigma(i)}Q_{\sigma(i)+1}$, which is its isometric circle, to the geodesic $Q_{i+1}P_i$. We also see that $T_{\sigma(i)} Q_{\sigma(i)+2} = Q_i$ and $T_{\sigma(i)} P_{\sigma(i)-1} = P_{i+1}$, and since $T_{\sigma(i)}$ maps the outside of $P_{\sigma(i)}Q_{\sigma(i)+1}$ to the inside of $Q_{i+1}P_i$ and preserves the order of points, we conclude that $T_{\sigma(i)}Q_{\sigma(i)+3}\in (Q_i,P_{i+1})$. Therefore $I \subset [B_i,P_{i+1}] \subset (A_i,A_{i+1})$. Thus $w \in I$ implies $F_\A \g = T_i \g$, which contradicts the assumption $F_\A\g=T_{i+1}\g$ of this case.

\medskip Cases 1--3 cover all potential ways that
    \[ \commutativeDiagram{(u',w')}{F_G}{\quad}{\Phi=U_i}{\Phi}{(u,w)}{F_\A}{\quad} \]
    can be completed, showing that \eqref{commutes} is true if $(u,w) \in \Cs^i$. Similar arguments show that \eqref{commutes} is true for $(u,w) \in \Cs_i$ as well.

Thus we have proven \eqref{commutes} for all $(u,w) \in \Omega_\A$, meaning that
    \[ \commutativeDiagram{\Omega_G}{F_G}{\Omega_G}{\Phi}{\Phi}{\Omega_\A}{F_\A}{\Omega_\A} \]
    is indeed a commutative diagram.
\end{proof}
\section{Cross-sections}\label{sec cross-section}

Based on Corollary~\ref{corner}, we introduce the notion of the {\em $\A$-cross-section point}. It is the entrance point of an $\A$-reduced geodesic $\g$ to $\F$, or, if $\g$ does not intersect $\F$, the first entrance point to $\F_j$, where $j$ is as in Corollary~\ref{corner}.

Now we define a map
\[ \varphi: \Omega_{\A}\to S\D,\quad \varphi(u,w)=(z, \zeta), \]
where $z$ is the $\A$-cross-section point on the geodesic $\g$ from $u$ to $w$ and $\zeta$ is the unit tangent vector to $\g$ at $z$. This map is clearly injective. Composed with the canonical projection $\pi : S\D \to SM$ from (\ref{projection}), we obtain a map
\begin{equation}
    \label{piphi} \pi\circ\varphi: \Omega_{\A}\to SM.
\end{equation}
The set $C_\A:=\pi\circ\varphi(\Omega_{\A})$ can be described as follows: $\pi(z,\zeta)\in C_\A$ if the geodesic~$\g$ in~$\D$ through $(z,\zeta)$ is $\A$-reduced or if $U_j \gamma$ is $\A$-reduced for $j \in \Ab$ determined in Corollary~\ref{anti corner}. It follows from Corollary~\ref{anti corner} that the map $\pi\circ\varphi$ is injective and continuous, and hence $C_\A$ is parametrized by $\Omega_{\A}$. Since $\Omega_{\A}$ is an attractor for $F_{\A}$, $C_\A$ is a cross-section for the geodesic flow $\{\varphi^t\}$; we call $C_\A$ an \emph{arithmetic cross-section}.

The \emph{geometric cross-section} $C_G$ can be described in similar terms. We define a (clearly injective) map
\[ \psi:\Omega_G\to S\D,\quad \varphi(u,w)=(z,\zeta), \]
where $z$ is the entrance point on the geodesic $\g$ from $u$ to $w$ to $\F$, and $\zeta$ is the unit tangent vector to $\g$ at $z$. Then
\[ \pi\circ\psi: \Omega_G\to SM \]
is injective, and $C_G:=\pi\circ\psi(\Omega_G)$ consists of \emph{all} $\pi(z,\zeta)$ for which $z \in \partial \F$ and~$\zeta$ points inward.

A priori, we only know that $C_\A \subset C_G$; the first return to $C_G$ is not necessarily the first return to $C_\A$. If $\G=PSL(2,\Z)$, a geodesic can return to the geometric cross-section multiple times before reaching the arithmetic one \cite{KU12}, but here the situation is simpler.

\begin{cor}
    \label{equalcross} $C_\A = C_G$.
\end{cor}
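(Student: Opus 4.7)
The plan is to establish the reverse inclusion $C_G \subseteq C_\A$, since $C_\A \subseteq C_G$ is observed just before the statement. Take any $\pi(z,\zeta) \in C_G$; by the description of $C_G$, the lift $(z,\zeta) \in S\D$ has $z \in \partial \F$, $\zeta$ points into $\F$, and the geodesic $\g$ through $(z,\zeta)$ has endpoints $(u,w) \in \Omega_G$. I would split into two cases depending on whether $(u,w)$ also lies in $\Omega_\A$.

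If $(u,w) \in \Os := \Omega_G \cap \Omega_\A$, then $\g$ is $\A$-reduced and intersects $\F$ at $z$, so by the definition of the $\A$-cross-section point directly $\varphi(u,w) = (z,\zeta)$, and hence $\pi(z,\zeta) = \pi \circ \varphi(u,w) \in C_\A$.

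If instead $(u,w) \in \Omega_G \setminus \Omega_\A$, then $(u,w)$ lies in one of the bulges of \eqref{def of bulges}. Proposition \ref{bulges map to corners} produces a specific $U_j$ (with $j = \tau(i)+1$ if $(u,w) \in \Bs_i$, or $j = \tau(i)$ if $(u,w) \in \Bs^i$) such that $(u',w') := U_j(u,w)$ lies in the matching corner $\Cs^{\tau(i)+1}$ or $\Cs_{\tau(i)-1}$, hence in $\Omega_\A \setminus \Omega_G$. Set $\g' := U_j \g$. Because $\F$ is geodesically convex, $\g \cap \F$ is a single arc whose entrance point is $z$, and therefore $\g' \cap \F_j = U_j(\g \cap \F)$ is a single arc whose entrance point is $U_j z$. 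Since $(u',w') \notin \Omega_G$, $\g'$ does not intersect $\F$, so the $\A$-cross-section point of $\g'$ is precisely this first entrance $U_j z$ into $\F_j$, and $\varphi(u',w') = (U_j z, U_j \zeta)$. Because $U_j \in \G$ (as a product of generators, via \eqref{eq:Ui}), we obtain
\[
  \pi(z,\zeta) \;=\; \pi(U_j z, U_j \zeta) \;=\; \pi \circ \varphi(u',w') \;\in\; C_\A.
\]

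The main bookkeeping point to verify is that the index $j$ used by Proposition \ref{bulges map to corners} to send a bulge to its matching corner is the same $j$ that Corollary \ref{corner} assigns to that corner when locating the ``first entrance to $\F_j$.'' This amounts to the identity $(\tau(i)-1)+1 = \tau(i)$ in the $\Bs^i \to \Cs_{\tau(i)-1}$ case and is immediate in the $\Bs_i \to \Cs^{\tau(i)+1}$ case, so the translate $U_j z$ really is the $\A$-cross-section point on $\g'$ and no ambiguity of the ``first entrance'' in Corollary \ref{corner} (cf.\ Remark \ref{two corner images 2}) can arise.
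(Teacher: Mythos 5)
Your proposal is correct and follows essentially the same route as the paper: the paper establishes the identity $\pi\circ\varphi\circ\Phi=\pi\circ\psi$ on $\Omega_G$ by the same two cases (identity on $\Os$, and $\Phi=U_j$ on the bulges with $j$ as in Corollary~\ref{anti corner}), and concludes $C_G=C_\A$ from the bijectivity of $\Phi$. Your version merely phrases this as the inclusion $C_G\subseteq C_\A$ and makes explicit the index-matching between Proposition~\ref{bulges map to corners} and Corollary~\ref{corner}, which the paper leaves implicit.
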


\begin{proof}
    Since $\pi\circ\varphi$, $\pi\circ\psi$, and $\Phi$ are bijections and $\Phi$ acts by elements of $\G$, the diagram
    \begin{equation}
        \label{cd:equalcross} \commutativeDiagram{\Omega_G}{\pi\circ\psi}{C_G}{\Phi}{\Id}{\Omega_\A}{\pi\circ\varphi}{C_\A}
    \end{equation}
    commutes.
    Indeed, let $\g=uw$ with $(u,w) \in \Omega_G$. If $\g$ is $\A$-reduced, then $\Phi=\Id$, $\varphi=\psi$, and we are done. If not, then for $j \in \Ab$ determined in Corollary~\ref{anti corner}, $\g' := U_j \g$ is $\A$-reduced. In this case $\Phi=U_j$, and again we have $\varphi\circ\Phi=\psi$.
\end{proof}

\begin{cor}
    \label{firstreturn} Given any tangent vector in the geometric cross-section~$C_G$, its first return to $C_G$ is also its first return to the arithmetic cross-section~$C_\A$.
\end{cor}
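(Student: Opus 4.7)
The plan is to deduce this corollary essentially for free from Corollary~\ref{equalcross}. Recall that the first return of a tangent vector $v \in SM$ to a subset $S \subset SM$ under the geodesic flow is by definition the real number $h(v) = \inf\{\, t > 0 : \varphi^t(v) \in S \,\}$, together with the point $\varphi^{h(v)}(v) \in S$. This quantity depends only on $S$ as a subset of $SM$, not on any parametrization or description of $S$.

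Having established in Corollary~\ref{equalcross} that the two cross-sections coincide \emph{as subsets of $SM$}, namely $C_\A = C_G$, the result is immediate: for any $v \in C_G = C_\A$, the set $\{\, t > 0 : \varphi^t(v) \in C_G \,\}$ is identical to the set $\{\, t > 0 : \varphi^t(v) \in C_\A \,\}$, so the infima agree and the first return points agree.

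I expect no real obstacle here; the only thing worth emphasizing in the write-up is that although $C_G$ and $C_\A$ are \emph{parametrized} differently (by $\Omega_G$ via $\pi\circ\psi$ and by $\Omega_\A$ via $\pi\circ\varphi$, respectively, with the parametrizations intertwined by the nontrivial map $\Phi$ in diagram \eqref{cd:equalcross}), the parametrizations do not affect the set of points where a geodesic crosses. Thus the substance of the corollary is entirely carried by Corollary~\ref{equalcross}, which in turn rests on the ``bulges map to corners'' phenomenon of Proposition~\ref{bulges map to corners}.
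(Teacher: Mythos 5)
Your argument establishes only a tautology and misses the actual content of the corollary. You are right that once Corollary~\ref{equalcross} gives $C_\A = C_G$ as subsets of $SM$, the sets $\{\, t>0 : \varphi^t(v)\in C_\A \,\}$ and $\{\, t>0 : \varphi^t(v)\in C_G \,\}$ coincide, so the intrinsic first-return time and point agree. But in this paper ``the first return to $C_\A$'' is not used in that intrinsic sense: as the discussion around Figure~\ref{fig first return} makes explicit, the first return of $v=\pi\circ\varphi(u,w)$ to $C_\A$ means the vector $\pi\circ\varphi\bigl(F_\A(u,w)\bigr)$, i.e.\ the next point of the \emph{arithmetic dynamics} under the parametrization of $C_\A$ by $\Omega_\A$, while the first return to $C_G$ is $\pi\circ\psi\bigl(F_G(u',w')\bigr)$. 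The corollary asserts that these two vectors coincide --- equivalently, that one application of $F_\A$ on $\Omega_\A$ realizes exactly one crossing of the cross-section by the flow, neither skipping a crossing nor landing on a later one. This is precisely what Proposition~\ref{coding}(2) needs in order to identify the first return with the left shift of the coding sequence, and it is the compact-surface counterpart of the phenomenon for $\G=PSL(2,\Z)$ mentioned just before Corollary~\ref{equalcross}, where a geodesic can hit the geometric cross-section several times before hitting the arithmetic one.

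Set equality alone does not give this; the missing ingredient is Theorem~\ref{conjugacy}. The identity $\Phi\circ F_G = F_\A\circ\Phi$, combined with the intertwining $(\pi\circ\varphi)\circ\Phi=\pi\circ\psi$ from diagram \eqref{cd:equalcross}, yields
\[ \pi\circ\varphi\circ F_\A\circ(\pi\circ\varphi)^{-1} \;=\; \pi\circ\psi\circ F_G\circ(\pi\circ\psi)^{-1}, \]
and the right-hand side is the genuine first return map to $C_G$ because $F_G$ by construction sends each geodesic segment in $\F$ to the next one. That concatenation of diagrams \eqref{cd:conjugacy} and \eqref{cd:equalcross} is exactly the paper's proof. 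Without invoking the conjugacy, your argument says nothing about how the $F_\A$-orbit on $\Omega_\A$ sits inside the sequence of successive crossings of the cross-section, so you cannot rule out, for instance, that one step of $F_\A$ corresponds to the second or third return of the flow. Your closing claim that ``the substance of the corollary is entirely carried by Corollary~\ref{equalcross}'' is therefore not correct: the substance is carried by Theorem~\ref{conjugacy} together with Corollary~\ref{equalcross}.
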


\begin{samepage}
\begin{proof}
    This is essentially equivalent to Theorem~\ref{conjugacy} via Corollary~\ref{equalcross}. 

Combining commutative diagrams (\ref{cd:conjugacy}) and (\ref{cd:equalcross}), we obtain a diagram
    \[ \begin{tikzpicture}[node distance=2cm]
        \node (top1) {$C_G$};
        \node [right of=top1, node distance=2.5cm] (top2) {$\Omega_G$};
        \node [right of=top2] (top3) {$\Omega_G$};
        \node [right of=top3, node distance=2.5cm] (top4) {$C_G$};
        \node [below of=top1] (bot1) {$C_\A$};
        \node [below of=top2] (bot2) {$\Omega_\A$};
        \node [below of=top3] (bot3) {$\Omega_\A$};
        \node [below of=top4] (bot4) {$C_\A$};
    	\path[->,font=\scriptsize,>=angle 90]
            (top1) edge node [above] {$(\pi\circ\psi)^{-1}$} (top2)
            	(top2) edge node [above] {$F_G$} (top3) 
    			(top3) edge node [above] {$\pi\circ\psi$} (top4)
            (top1) edge node [left]  {$\Id$} (bot1)
            (top2) edge node [left]  {$\Phi$} (bot2)
            (top3) edge node [right]  {$\Phi$} (bot3)
            (top4) edge node [right]  {$\Id$\normalsize\,.} (bot4)
            (bot1) edge node [below] {$(\pi\circ\varphi)^{-1}$} (bot2) 
            	(bot2) edge node [below] {$F_\A$} (bot3) 
    			(bot3) edge node [below] {$\pi\circ\varphi$} (bot4);
    \end{tikzpicture} \]
    The composition of the maps in the upper and lower rows are the first return maps to $C_G$ and $C_\A$, respectively. The result follows from commutativity of the diagram.
\end{proof}
\end{samepage}

Figure~\ref{fig first return} depicts the first return to the cross-section $C_\A$. Here $\g$ is $\A$-reduced but does not intersect $\F$, while  $\g' := \Phi^{-1}\g = U_2^{-1}\g$ intersects $\F$ but is not $\A$-reduced. The unit tangent vector at the point of entrance of $\g'$ to $\F$ belongs to both $C_\A$ and $C_G$. Its first return to $C_\A$ is the unit tangent vector at the point of entrance of $F_\A\g$ to $\F$, and its first return to $C_G$ is the unit tangent vector at the point of entrance of $F_G\g'$ to $\F$. Since $F_G\g'=F_\A\g$, the first returns coincide.

\begin{figure}[ht]
	\includegraphics{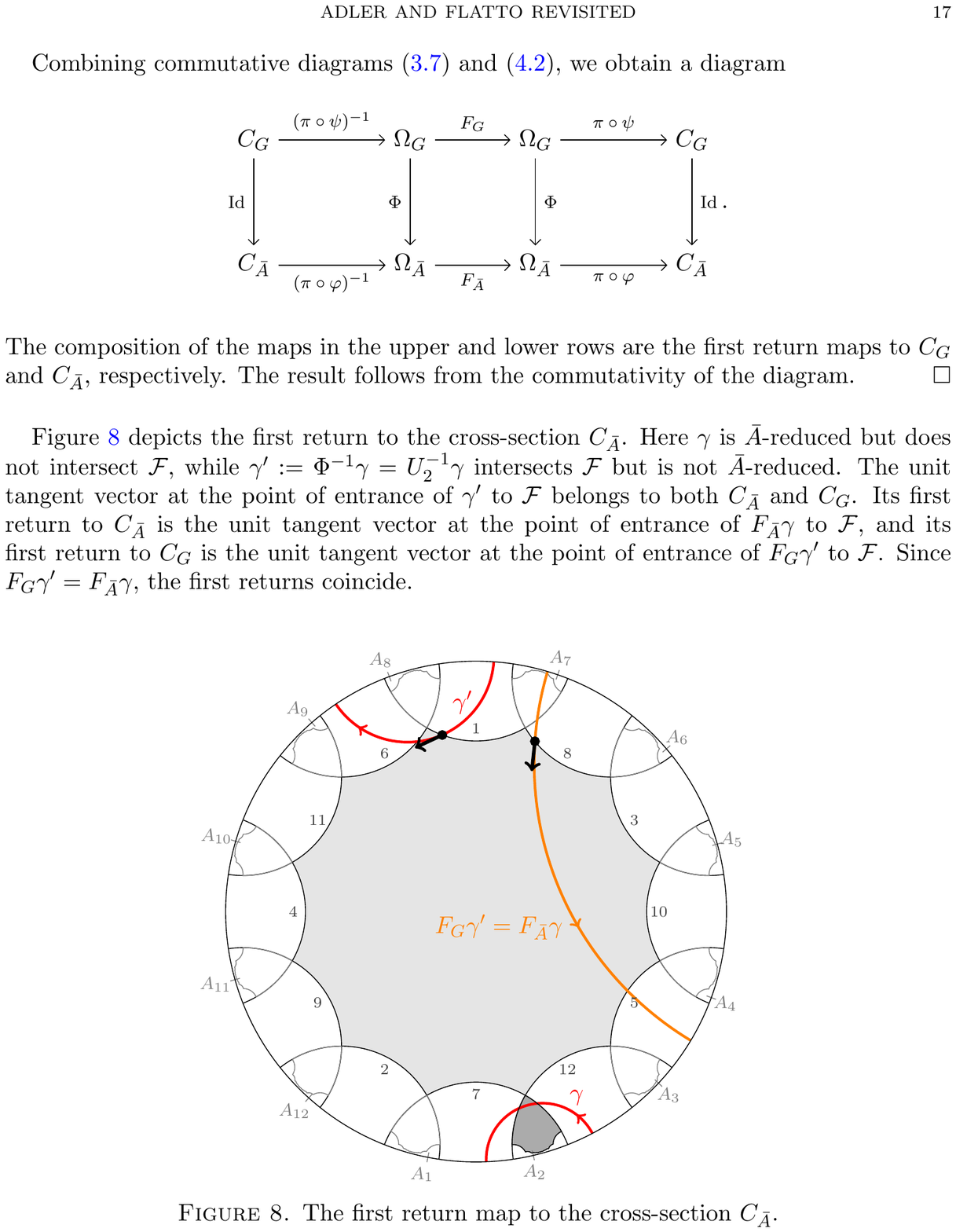}
	\caption{The first return map to the cross-section $C_\A$.}
	\label{fig first return}
\end{figure}

\section{Symbolic coding of geodesics}\label{sec coding}

In this section we describe how to code geodesics for a partition $\A$ for which $F_{\A}$ possesses an attractor $\Omega_\A$ with finite rectangular structure. A large class of examples was given in ~\cite[Theorems 1.3 and 2.1]{KU17}. A reduction algorithm described there for (almost) every geodesic $\g$ from $u$ to $w$ in $\D$ produces in finitely many steps an $\A$-reduced geodesic $\Gamma$-equivalent to $\g$, and an application of this algorithm to an $\A$-reduced geodesic produces another $\A$-reduced geodesic.

We associate to any $w_0 \in \Sb$ a sequence of symbols from the alphabet $\Ab$
\begin{equation}
    \label{future} [w_0]_\A = \sequence{ n_0, n_1, n_2,\dots },
\end{equation}
where $n_k=\sigma(i)$ if $f_\A^k(w_0)\in [A_i,A_{i+1})$ for $k\geq 0$. We call this the {\em (forward) $\A$-expansion} of $w_0$. If $\g=u_0w_0$ is an $\A$-reduced geodesic, we also call this sequence the {\em future} of $\g$.

By successive application of the map $F_\A$, we obtain a sequence of pairs $(u_k,w_k)=F_\A^k(u_0,w_0)$, $k\ge 0$, such that each geodesic $\g_k$ from $u_k$ to $w_k$ is $\A$-reduced and
\begin{equation}
    \label{wk} [w_k]_\A = \sequence{ n_k, n_{k+1}, \dots }.
\end{equation}
Using the bijectivity of the map $F_\A$, we extend the sequence (\ref{future}) to the past to obtain a bi-infinite sequence
\begin{equation}
    \label{codingseq} [\g]_\A = \sequence{ \dots,n_{-2},n_{-1},n_0,n_1,n_2,\dots },\quad n_i\in\Ab,
\end{equation}
called the \emph{arithmetic coding sequence} or \emph{arithmetic code} of $\g$ (or the \emph{coding sequence} or \emph{code} when the context is clear) as follows: from the bijectivity of the map $F_\A$ on $\Omega_\A$, there exists a pair $(u_{-1},w_{-1})\in\Omega_\A$ such that $F_\A(u_{-1},w_{-1})=(u_0,w_0)$, i.e., $f_\A w_{-1}=w_0$. Then $w_{-1}\in [A_i,A_{i+1})$ for some $i\in \Ab$, and for $n_{-1}=\sigma(i)$ we have
\[ [w_{-1}]_\A = \sequence{ n_{-1},n_0,n_1,\dots }. \]
Continuing inductively, we define the sequence $n_{-k}\in\Ab$ and the pairs $(u_{-k},w_{-k})\in\Omega_\A$ ($k\geq 2$), where
\[ [w_{-k}]_\A = \sequence{ n_{-k},n_{-k+1}, n_{-k+2},\dots }, \]
by $F_\A(u_{-k},w_{-k})=(u_{-(k-1)},w_{-(k-1)})$. We call the sequence
\begin{equation}
    \label{seq-past} \sequence[]{n_{-1}, n_{-2},\dots,n_{-k}\dots}
\end{equation}
the {\em past} of $\g$.

Notice that the future of $\g$ depends only on $w_0$ while, in general, the past of $\g$ depends on both, $w_0$ and $u_0$. In some rare cases the past only depends on $u_0$, and, in fact, the sequence (\ref{seq-past}) is an expansion of $u_0$ with respect to a different (dual) partition. This is a motivation for study dual codes in Section~\ref{dual codes}.

We also associate to $\g=\g_0$ a bi-infinite sequence $\{\g_k = u_kw_k \}_{k \in \Z}$ of $\A$-reduced geodesics $\Gamma$-equivalent to $\g$. The left shift of the this sequence corresponds to an application of the map $F_\A$ to the corresponding geodesic: $F_\A\g_k=\g_{k+1}$.

Combining results of Sections~\ref{sec conjugacy} and~\ref{sec cross-section}, we obtain the following result:
\begin{samepage}
\begin{prop}
    \label{coding} Let $\g$ be an $\A$-reduced geodesic on $\D$ and $\bar\g$ its projection to $M$. Then
    \begin{enumerate}
        \item each geodesic segment of $\bar\g$ between successive returns to the cross-section $C_\A$ produces an $\A$-reduced geodesic on $\D$, and each reduced geodesic $\Gamma$-equivalent to $\g$ is obtained this way;
		\item the first return of $\bar\g$ to the cross-section $C_\A$ corresponds to a left shift of the coding sequence of $\gamma$.
    \end{enumerate}
    Additionally, (1) and (2) hold for the case $\A = \P$.
\end{prop}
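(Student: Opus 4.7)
The plan is to package the earlier machinery: the conjugacy $\Phi:\Omega_G\to\Omega_\A$ between $F_G$ and $F_\A$ (Theorem~\ref{conjugacy}), the identification $C_\A=C_G$ (Corollary~\ref{equalcross}), and the coincidence of first-return maps (Corollary~\ref{firstreturn}). Combined, they say that successive returns of $\bar\g$ to $C_\A$ correspond, via $\pi\circ\varphi$, to successive iterates of $F_\A$ on $\Omega_\A$. Starting from the $\A$-reduced $\g=\g_0$, the bijectivity of $F_\A$ on $\Omega_\A$ lets us define $\g_k := F_\A^k\g_0$ for all $k\in\Z$; each $\g_k$ lies in $\Omega_\A$, hence is $\A$-reduced, and is $\G$-equivalent to $\g$ by the construction of $F_\A$.

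For part (1), the forward direction (``each segment produces an $\A$-reduced geodesic'') follows by combining Corollary~\ref{firstreturn} with the diagrams \eqref{cd:conjugacy} and \eqref{cd:equalcross}: the $k$-th return of $\bar\g$ to $C_\A$ is $\pi\circ\varphi(\g_k)$, so extending the segment after the $k$-th return gives the $\A$-reduced geodesic $\g_k$. For the converse, if $\g'$ is an $\A$-reduced geodesic with $\bar{\g'}=\bar\g$, then the tangent vector at its $\A$-cross-section point projects into $C_\A$ and lies on $\bar\g$, hence coincides with some $k$-th return; injectivity of $\pi\circ\varphi$ then forces $\g'=\g_k$.

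For part (2), the definition \eqref{wk} is arranged precisely so that $[w_{k+1}]_\A$ is the left shift of $[w_k]_\A$, and the past-extension in \eqref{codingseq} via bijectivity of $F_\A$ makes the full bi-infinite code shift left by one whenever $F_\A$ is applied. Together with part (1), this gives the shift correspondence. The case $\A=\P$ goes through verbatim, with the Adler-Flatto conjugacy recalled in Section~\ref{sec conjugacy} playing the role of Theorem~\ref{conjugacy}; Corollaries~\ref{equalcross} and~\ref{firstreturn} already include this case. I expect no serious obstacle: the only point requiring vigilance is verifying that the first return to $C_\A$ is exactly one step of $F_\A$ rather than several, which is precisely what Corollary~\ref{firstreturn} provides, reducing the statement to the fact that one step of $F_G$ corresponds to one segment of $\bar\g$ inside $\F$.
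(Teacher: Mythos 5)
Your proposal is correct and follows essentially the same route as the paper: both reduce the statement to the bulge/corner correspondence of Section~\ref{sec conjugacy} together with Corollary~\ref{firstreturn}, the only cosmetic difference being that you invoke the packaged statements (Theorem~\ref{conjugacy}, Corollary~\ref{equalcross}) where the paper cites Corollaries~\ref{anti corner} and~\ref{corner} directly for part (1). Your converse argument via injectivity of $\pi\circ\varphi$ is a clean way to phrase what the paper does with Corollary~\ref{corner}.
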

\end{samepage}

\begin{proof}
    Let $\g$ be an $\A$-reduced geodesic on $\D$. Then its projection $\bar\g$ to $M$ can be represented as a (countable) sequence of geodesic segments in $\F$. By Corollary~\ref{anti corner}, each such segment either extends to an $\A$-reduced geodesic on $\D$ or its image under $U_j$ is $\A$-reduced ($j$ is specified in Corollary~\ref{anti corner}). If $\g'$ is an $\A$-reduced geodesic $\Gamma$-equivalent to $\g$, then both project to the same geodesic in $M$. By Corollary~\ref{corner}, either $\g'$ intersects $\F$ or its image under $U_j^{-1}$ intersects $\F$ ($j$ is specified in Corollary~\ref{corner}). In either case the intersection of the corresponding geodesic on $\D$ with $\F$ is another segment of the same geodesic in $\F$. This completes the proof of (1).

Since $F_\A\g=\g_1$, $f_\A w_1=\sequence{n_1, n_2,\dots}$, the first digit of the past of $\g_1$ is $n_0$, and the remaining digits are the same as in the past of $\g$. Now (2) follows from Corollary~\ref{firstreturn}.

Since Corollaries~\ref{anti corner} and~\ref{corner} are true for $\A = \P$, the arguments in this proof hold for $\A = \P$ as well.
\end{proof}
The following corollary is immediate.
\begin{cor}
    \label{equivalentgeod} If $\g'$ is $\G$-equivalent to $\g$, and both geodesics can be reduced in finitely many steps, then the coding sequences of $\g$ and $\g'$ differ by a shift.
\end{cor}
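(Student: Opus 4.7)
The plan is to reduce the corollary to Proposition~\ref{coding} by replacing each of $\g$ and $\g'$ by an $\A$-reduced representative in its $\G$-orbit. By hypothesis, applying the reduction algorithm to $\g$ terminates after finitely many steps in an $\A$-reduced geodesic $\tilde\g$, and likewise $\g'$ reduces to some $\A$-reduced $\tilde{\g'}$. Since reduction only replaces a geodesic by a $\G$-translate, $\tilde\g$ and $\tilde{\g'}$ are $\G$-equivalent to each other, and the coding sequences of $\g$ and $\g'$ are, by definition, those of $\tilde\g$ and $\tilde{\g'}$. It therefore suffices to show that two $\A$-reduced $\G$-equivalent geodesics have coding sequences that differ by a shift.

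Next I would use the fact that $\G$-equivalent geodesics project to the same oriented geodesic $\bar\g$ on $M=\G\backslash \D$. By Proposition~\ref{coding}(1), every $\A$-reduced geodesic in the $\G$-orbit of $\g$ arises from a segment of $\bar\g$ between two successive returns to the cross-section $C_\A$; conversely, each such segment produces one $\A$-reduced representative. Hence $\tilde\g$ and $\tilde{\g'}$ correspond to two (possibly different) segments in the bi-infinite sequence of $C_\A$-returns of $\bar\g$, say the $k$-th and $k'$-th segments.

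Finally, by Proposition~\ref{coding}(2), advancing from one return to $C_\A$ to the next corresponds to one application of $F_\A$, which is exactly the left shift on coding sequences. Iterating $|k-k'|$ times (and using bijectivity of $F_\A$ on $\Omega_\A$ to go backwards if necessary), we see that the coding sequence of $\tilde{\g'}$ is obtained from that of $\tilde\g$ by a shift of $k'-k$. Unwinding the identification with $\g$ and $\g'$, this is precisely the claim.

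The only subtle point, which I expect to be the one to state carefully rather than technically hard, is that the coding sequence of a (non-reduced) geodesic is well-defined up to the choice of $\A$-reduced representative produced by the algorithm; but any two such representatives of a single geodesic already differ by a shift, again by Proposition~\ref{coding}, so the ambiguity is consistent with the statement of the corollary and causes no trouble.
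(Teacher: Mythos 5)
Your argument is correct and is exactly the reasoning the paper has in mind: the paper simply declares the corollary ``immediate'' from Proposition~\ref{coding}, and your write-up spells out the intended deduction (reduce both geodesics, note the reduced representatives correspond to two return segments of the common projection to $M$, and apply parts (1) and (2) together with the bijectivity of $F_\A$ to shift between them). No gaps.
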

Thus we can talk about coding sequences of geodesics on $M$. To any geodesic $\g$ that can be reduced in finitely many steps we associate the coding sequence (\ref{codingseq}) of a reduced geodesic $\G$-equivalent to it. Corollary~\ref{equivalentgeod} implies that this definition does not depend on the choice of a particular representative.

\medskip An \emph{admissible} sequence $s \in \Ab^\Z$ is one obtained by the coding procedure (\ref{codingseq}) from some reduced geodesic $uw$ with $(u,w)\in\Omega_\A$. Given an admissible sequence, then, we can associate to it the vector $v \in C_\A$ such that $v = \pi \circ \phi(u,w)$, where $\pi\circ\varphi:\Omega_{\A}\to C_{\A}$ is defined in (\ref{piphi}). We denote this vector $v$ by $\Cod(s)$. The map $\Cod$ is essentially bijective (finite-to one: see \renewcommand{\thesubsection}{\arabic{subsection}}Example~\ref{mult arith} in Section~\ref{sec examples}). By Proposition~\ref{prop:cont} below, $\Cod$ is uniformly continuous on the set of admissible coding sequences, and thus we can extend it to the closure $X_\A \subset \Ab^\Z$ of all admissible sequences.

The symbolic system $(X_{\A},\sigma)\subset (\Ab^\Z,\sigma)$ is defined on the alphabet $\Ab=\{1$, $2$, \ldots, $8g-4\}$. The product topology on $\Ab^\Z$ is induced by the distance function
$ d(s,s')=\frac 1{m}, $
where $m=\max\{\, k : s_i= s'_i\text{ for } \abs i \leq k \,\}$.
\begin{prop}
    \label{prop:cont} The map $\Cod$ is uniformly continuous.
\end{prop}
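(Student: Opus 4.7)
The plan is to factor $\Cod = (\pi\circ\varphi) \circ \iota$, where $\iota : X_\A \to \Omega_\A$ sends an admissible sequence to the endpoint pair $(u_0, w_0)$ produced by the coding construction. Since $\pi\circ\varphi$ is continuous on $\Omega_\A$---it is piecewise a M\"obius map followed by the unit-tangent section and the covering projection $\pi$, with pieces (determined by whether $(u,w)\in\Os$, in an upper corner, or in a lower corner) matching on common boundaries once one passes to $SM$, because the vertices of $\F$ and of the touching images $\F_j$ project to common points of $M$---it suffices to establish uniform continuity of $\iota$.

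Suppose $s, s' \in X_\A$ satisfy $d(s,s') < 1/m$, so $n_k = n_k'$ for all $|k|\le m$. For the \emph{future} part, both $w_0(s)$ and $w_0(s')$ lie in the cylinder
\[ \bigcap_{k=0}^m f_\A^{-k}\bigl([A_{\sigma(n_k)}, A_{\sigma(n_k)+1})\bigr). \]
The main analytic input is that the piecewise M\"obius map $f_\A$ is uniformly expanding on $\Sb$: there exist constants $C_1 > 0$ and $\lambda > 1$, independent of the cylinder, for which this set has diameter at most $C_1 \lambda^{-m}$. For $\A = \P$ this is the classical Bowen--Series expansion \cite{BS79}, and for $\A$ with the short cycle property it is inherent in the analysis underpinning the finite rectangular structure of $\Omega_\A$ in \cite{KU17}. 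Hence $|w_0 - w_0'| \le C_1 \lambda^{-m}$.

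For the \emph{past}, bijectivity of $F_\A$ on $\Omega_\A$ produces points $(u_{-m}, w_{-m}), (u_{-m}', w_{-m}') \in \Omega_\A$ whose $F_\A^m$-images are $(u_0, w_0)$ and $(u_0', w_0')$ respectively, and both iterations apply the same ordered composition of generators $T_{\sigma(n_{-m})} \cdots T_{\sigma(n_{-1})}$. Since $\Omega_\A$ is compactly contained in $(\Sb\times\Sb)\setminus\Delta$ and each $T_i$ restricted to the relevant arc of $u$-values contracts dually to its $w$-expansion, the iterated composition contracts $u$-diameters by at least $\lambda^m$. This yields $|u_0 - u_0'| \le C_2 \lambda^{-m}$, so combining with the future estimate, $\iota(s)$ and $\iota(s')$ are at Euclidean distance $O(\lambda^{-m})$ in $\Omega_\A$, uniformly in $s, s'$. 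Uniform continuity of $\Cod$ on the dense set of admissible sequences then propagates to the closure $X_\A$ with the same modulus.

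The principal obstacle is quantifying the uniform expansion rate $\lambda > 1$, but this is precisely the hyperbolic mechanism responsible for the finite rectangular attractor in \cite{KU17}; once that input is taken for granted, the rest of the argument is a routine propagation of errors through the two coordinates of $\Omega_\A$.
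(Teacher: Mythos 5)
Your proof is correct and follows essentially the same route as the paper's: the forward expansion of $f_\A$ controls the attracting endpoints $w_0,w_0'$, the backward contraction (via bijectivity of $F_\A$ on $\Omega_\A$ and the fact that the $u$-coordinates sit outside the isometric circles of the transformations being applied) controls $u_0,u_0'$, and the cross-section vector depends continuously on $(u,w)$. The only real difference is that the paper derives the uniform expansion constant in one line from $[A_i,A_{i+1}]$ being compactly contained inside the isometric circle of $T_i$ (so $T_i'>\mu_i>1$ there), whereas you defer that input to \cite{BS79} and \cite{KU17}.
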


\begin{proof}
    Let $s$ and $s'$ be two admissible coding sequences obtained from reduced geodesics $uw$ and $u'w'$, respectively. If $d(s,s')<\frac 1{m}$, then the $\A$-expansions of the attracting end points $w$ and $w'$ of the corresponding geodesics given by (\ref{future}) have the same first $m$ symbols in their (forward) $\A$-expansions, and the same is true for any $y\in [w,w']$, hence $f^k_\A(y)=y_k\in[A_{\sigma(n_k)}, A_{\sigma(n_k)+1})$ for $0\leq k\leq m$, and $y=T_{n_0}T_{n_1}\cdots T_{n_m}(y_m)$.

Since $[A_i, A_{i+1})\subset (P_i, Q_{i+1})$, any point $y\in [A_i, A_{i+1})$ is inside the isometric circle for $T_i$, and thus $T'_i(y)>\mu_i>1$ for some $\mu_i > 1$. Using this for $i=\sigma(n_0), \sigma(n_1),\dots, \sigma(n_m)$ and the Chain Rule, we conclude that the derivative of the composite function
    \[ f_\A^{m+1}(y)=T_{\sigma(n_m)}\cdots T_{\sigma(n_1)}T_{\sigma(n_0)}(y) \]
    is $>\mu^{m+1}$, and hence the derivative of the inverse function
    \[ f_\A^{-(m+1)}(y_m)=T_{n_0}T_{n_1}\cdots T_{n_m}(y_m) \]
    is $<\lambda^{m+1}$ for some $\lambda<1$. Then, by the Mean Value Theorem, the arc length distance
    \[ \ell(w,w') = \big(f_\A^{-(m+1)}\big)'(c) \cdot \ell(f_\A^{-(m+1)}w, f_\A^{-(m+1)}w') \]
    for some $c \in [w,w']$. Since $(f_\A^{-(m+1)})'(c) < \lambda^{m+1}$ and the arclength distance between any two points is at most $2\pi$, we have that
    \[ \ell(w,w') \leq 2\pi \lambda^{m+1}. \]
    Now we choose $m$ large enough so that $w$ and $w'$ are so close that for any $x\in[u,u']$ $(x,y)\in \Omega_\A$ (here we use the rectangular structure of $\Omega_\A$ and are avoiding the corners). We recall that we used bijectivity of $F_\A$ to define the sequence of pairs $(x_{-i}, y_{-i})\in\Omega_\A$ to the past, $1\leq i\leq m$. Since the first $m$ symbols with negative indices in the coding sequences $s$ and $s'$ are the same, for any $y\in [w,w']$ we have $y_{-i}\in [A_{\sigma(n_{-i})}, A_{\sigma(n_{-i}+1)})$ and hence
    \[ f^m_\A(y_{-m})=T_{\sigma(n_{-1})}T_{\sigma(n_{-2})}\cdots T_{\sigma(n_{-m})}(y_{-m})=y, \]
    and at the same time we have
    \[ T_{\sigma(n_{-1})}T_{\sigma(n_{-2})}\cdots T_{\sigma(n_{-m})}(x_{-m})=x \]
    for $x\in[u,u']$.

But $T_{\sigma(n_{-i})}$ were determined by the second coordinate $y_{-i}$, and since $(x_{-i},y_{-i})\in\Omega_\A$, and $y_{-i}$ is inside the isometric circle for $T_{\sigma(n_{-i})}$, $x_{-i}$ is outside that isometric circle, so the derivative of the composite function $T_{\sigma(n_{-1})}T_{\sigma(n_{-2})}\cdots T_{\sigma(n_{-m})}(x_{-m})$ is $<\lambda^m$.
Therefore
    \begin{align*}
    	\ell(u, u') &= \ell\big(T_{\sigma(n_{-1})}T_{\sigma(n_{-2})}\cdots T_{\sigma(n_{-m})}(u_{-m}),T_{\sigma(n_{-1})}\cdots T_{\sigma(n_{-k})}(u'_{-m})\big) \\ &\leq 2\pi\lambda^{m}
	\end{align*}
    for some $\lambda<1$.

Therefore the geodesics are uniformly $2\pi\lambda^{m}$-close. But the tangent vectors $v = \Cod(s)$ and $v' = \Cod(s')$ in $C_{\A}$ are determined by the first intersection of the corresponding geodesic with the boundary of $\F$ or $\F_j$ for a particular $j$ determined in Corollary~\ref{anti corner}. Hence, by making $m$ large enough we can make $v'$ as close to $v$ as we wish. Thus we proved that the map $\Cod$ is uniformly continuous on the set of admissible coding sequences and therefore extends to the closure $X_\A$ as a uniformly continuous map.
\end{proof}

In conclusion, the geodesic flow becomes a special flow over a symbolic dynamical system $(X_{\A},\sigma)$ on the finite alphabet $\Ab$. The ceiling function $g_{\A}(s)$ on $X_{\A}$ is the time of the first return to the cross-section $C_\A$ of the geodesic associated to $s$.

\begin{figure}[htb]
\begin{tikzpicture}[scale=3.15]
    \fill [black!10] (0,0) circle (1);
    \draw [black!50] (0,0) node {$\F$};
    \begin{scope}
        \clip (0,0) circle (1);
        \foreach \k in {1,2,...,12}
            \fill [white] (-120+30*\k : 1.07457) circle (0.39332);
        \foreach \k in {1,2,...,12}
            \draw (-120+30*\k : 1.07457) circle (0.39332);
    \end{scope}
	\foreach \k/\s in {1/7,2/12,3/5,4/10,5/3,6/8,7/1,8/6,9/11,10/4,11/9,12/2} {
    	\draw (-120+\k*30:0.63) node [black] {\scriptsize$\k$};
		\draw (-120+\k*30:0.74) node [black] {\small$\boldsymbol{\s}$};
	}
    \draw (0,0) circle (1);
    \draw (0,-1.05) node [below] {(a)};
    
\begin{scope}[xshift=2.2cm]
    \begin{scope}
        \clip (0,0) circle (1);
        \foreach \k in {1,2,...,12} 
            \draw [rotate=-120+30*\k] (0:1.07457)+(-120:0.39332) arc (-120:120:0.39332);
    \end{scope}
	
	\foreach \k/\s in {1/7,2/12,3/5,4/10,5/3,6/8,7/1,8/6,9/11,10/4,11/9,12/2} {
		\draw [fill=black!10,rotate=150+30*\k] (-0.19666, 0.733945)--(-0.143549, 0.708381)--(-0.0872148, 0.691041)--(-0.0289217, 0.682315)--(0.0300209, 0.682397)--(0.0882893, 0.691287)--(0.144575, 0.708785)--(0.19666, 0.733945)--(0.173971, 0.779929)--(0.160673, 0.817184)--(0.152781, 0.847794)--(0.148146, 0.873406)--(0.145558, 0.895239)--(0.144308, 0.914184)--(0.143965, 0.930605)--(0.129178, 0.932429)--(0.116792, 0.937009)--(0.106917, 0.943184)--(0.099322, 0.950108)--(0.0936507, 0.957235)--(0.0895394, 0.964246)--(0.0867127, 0.970848)--(0.0800721, 0.969332)--(0.073605, 0.969404)--(0.0677094, 0.970805)--(0.0626219, 0.973204)--(0.0584347, 0.976273)--(0.0551365, 0.979732)--(0.0526949, 0.9833)--(0.0488263, 0.981624)--(0.0447405, 0.980878)--(0.0407093, 0.98105)--(0.0369651, 0.982038)--(0.033674, 0.983684)--(0.0309286, 0.985808)--(0.028793, 0.988191)--(0.0260533, 0.986569)--(0.0229979, 0.985571)--(0.0198088, 0.985263)--(0.0166785, 0.985645)--(0.0137804, 0.986651)--(0.0112466, 0.98817)--(0.00919302, 0.990029)--(0.00694077, 0.988362)--(0.00431751, 0.987165)--(0.00144915, 0.98654)--(-0.00150405, 0.986546)--(-0.00436923, 0.987182)--(-0.00698658, 0.988389)--(-0.00919302, 0.990029)--(-0.0112901, 0.988138)--(-0.0138316, 0.986627)--(-0.0167354, 0.985632)--(-0.0198684, 0.985262)--(-0.0230568, 0.985583)--(-0.0261079, 0.986594)--(-0.028793, 0.988191)--(-0.0309746, 0.985765)--(-0.0337307, 0.983648)--(-0.0370314, 0.982013)--(-0.0407826, 0.981039)--(-0.0448172, 0.980884)--(-0.0489014, 0.981647)--(-0.0526949, 0.9833)--(-0.0551903, 0.979666)--(-0.0585046, 0.976211)--(-0.0627088, 0.973152)--(-0.0678126, 0.970769)--(-0.0737216, 0.969389)--(-0.0801961, 0.969345)--(-0.0867127, 0.970848)--(-0.0896039, 0.964117)--(-0.0937413, 0.957102)--(-0.0994451, 0.949975)--(-0.107079, 0.94306)--(-0.117, 0.936906)--(-0.129433, 0.932366)--(-0.143965, 0.930605)--(-0.144322, 0.913852)--(-0.145593, 0.89486)--(-0.148211, 0.872966)--(-0.152894, 0.847273)--(-0.160864, 0.816555)--(-0.174292, 0.779157) -- cycle;
		\draw [black] (-120+\k*30:0.73) node {\scriptsize$\boldsymbol{\s}$};
		\draw [black!50] (-120+\k*30:0.87) node {\scriptsize$T_{\s}\F$};
	}
    \draw (0,0) circle (1);
    \draw (0,-1.05) node [below] {(b)};
\end{scope}
\end{tikzpicture}\vspace*{-1em}
\caption{(a) The ``outside'' labels in bold, corresponding to $T_k^{-1} = T_{\sigma(k)}$. (b)~These are the ``inside'' numbers for images of $\F$.}
\label{fig inside outside}
\end{figure}
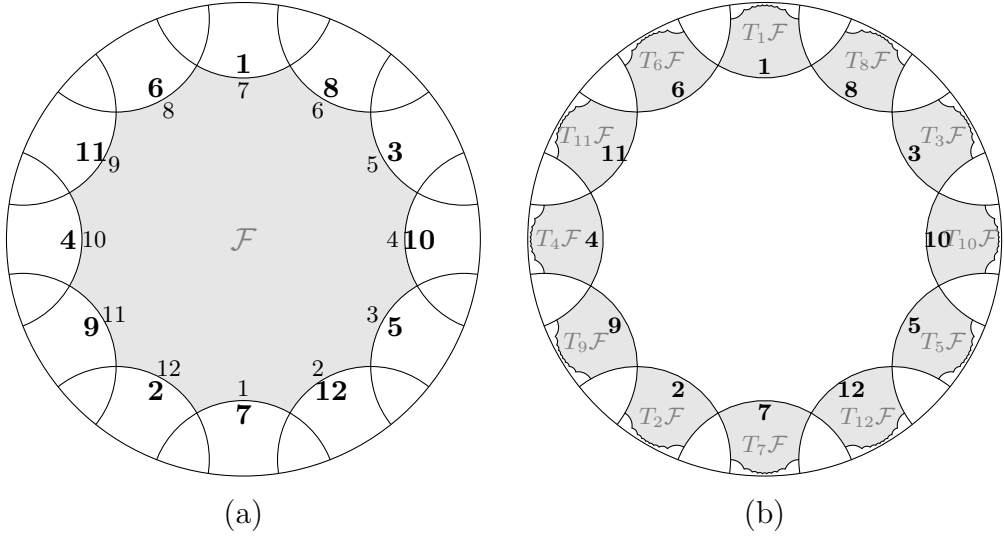

\section{Examples of coding} \label{sec examples}

Note that when a geodesic segment exits $\F$ through side $i$ it is $\sigma(i)$ that is used in the geometric code, as described in the Introduction. Similarly, if the endpoint of a geodesic lies in $[A_i,A_{i+1})$, it is $\sigma(i)$ that is used in the arithmetic code, as described in Section~\ref{sec coding}. We could call $i$ the ``inside'' numbering of the sides of $\F$ (this is the numbering used in the Introduction), while $\sigma(i)$ is the ``outside'' numbering, as shown in Figure~\ref{fig inside outside}. See \mbox{\cite[Introduction]{K96}} for more detail on geometric codes and numbering conventions.


\medskip
As described in \cite[Section~2]{KU07}, an axis of a transformation in $\G$ (that is, a geodesic whose beginning and end points are the repelling and attracting fixed points of the transformation) becomes a closed geodesic in $\G\backslash\D$ and has a periodic geometric code. For both arithmetic and geometric codes, we denote repeating codes by only showing the repetend, that is,
\[ \sequence[]{n_0,\ldots,n_k} \quad\text{means}\quad \sequence[]{\ldots,n_k,n_0,n_1,n_2,\ldots,n_k,n_0,\ldots}. \]

\begin{figure}
	\includegraphics[height=1.0\textheight]{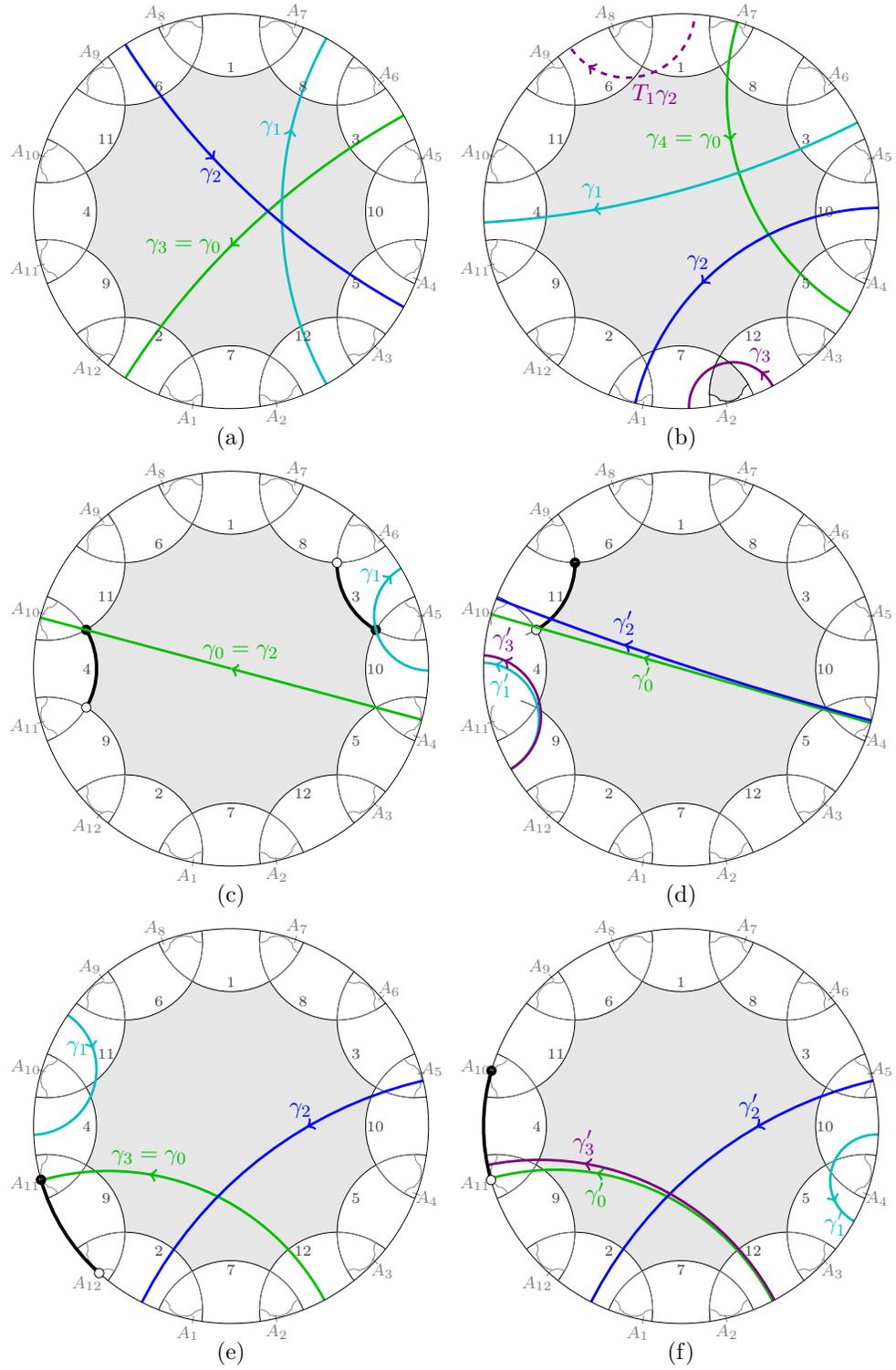}
	\caption{(a)~Example~1. (b)~Example~2. (c-d)~Ex.~3. (e-f)~Ex.~4.}
	\label{fig coding examples}
\end{figure}

\subsection{Example 1: codes agree.}

Let $\g_0$ be the axis of $T_2T_8T_5$. Then
\[
\begin{array}{r@{\;=\;}r@{\quad\text{because }}l}
    \gamma_1 & T_{12} \gamma_0  & w_0 \in [A_{12},A_1) \\
    \gamma_2 & T_6 T_{12} \gamma_0 & w_1 \in [A_6,A_7) \\
    \gamma_3 & T_3 T_6 T_{12} \gamma_0 & w_2 \in [A_3,A_4)
\end{array}
\] and $\gamma_3 = \gamma_0$. Thus the arithmetic code is
\[ [\gamma]_\A = \sequence{\sigma(12),\sigma(6),\sigma(3)} = \sequence{2,8,5}. \]
The geometric code for this example is the same because for each geodesic $\gamma_k$ the index~$i$ for which $w_k \in [A_i,A_{i+1})$ is the same $i$ as the side through which $\gamma_k$ exits $\F$; see Figure~\ref{fig coding examples}(a).
\subsection{Example 2: codes disagree.} 
Let $\g_0$ be the axis of $T_5T_4T_7T_6$. Then
\[
\begin{array}{r@{\;=\;}r@{\quad\text{because }}l}
    \gamma_1 & T_3 \gamma_0  & w_0 \in [A_3,A_4) \\
    \gamma_2 & T_{10} T_3 \gamma_0 & w_1 \in [A_{10},A_{11}) \\
    \gamma_3 & T_{12} T_{10} T_3 \gamma_0 & w_2 \in [A_{12},A_1) \\
    \gamma_4 & T_1 T_{12} T_{10} T_3 \gamma_0 & w_3 \in [A_1,A_2)
\end{array}
\] and $\gamma_4 = \gamma_0$. The arithmetic code is thus
\[ [\gamma]_\A = \sequence{\sigma(3),\sigma(10),\sigma(12),\sigma(1)} = \sequence{4,5,2,7}. \]
To find the geometric code of $\gamma$, we look at the sides through which segments exit $\F$. $\gamma_0$ exits through side $3$ (``inside'' numbering), so $T_3 \gamma = \gamma_1$ is the next segment. Then $\gamma_1$ exits through side $10$ and $T_{10} T_3 \gamma = \gamma_2$ is the next segment. At this point, we see that $\gamma_2$ exits $\F$ through side $1$, so the geometrically next segment is $T_1 T_{10} T_3 \gamma = T_1 \gamma_2$ (this is \textit{not}~$\gamma_3$). The geodesic $T_1 \gamma_2$ is shown as a dashed curve at the top of Figure~\ref{fig coding examples}(b). It exits $\F$ through side $8$, and then $T_6(T_1 \gamma_2) = \gamma_0$, so the geometric code for this example is
\[ [\gamma]_G = \sequence[G]{\sigma(3),\sigma(10),\sigma(1),\sigma(8)} = \sequence[G]{5,4,7,6}. \]
The geometric code $\sequence[G]{5,4,7,6}$ differs from the arithmetic code $\sequence{4,5,2,7}$ for this example precisely because $\gamma_2$ exits through side $1 = \sigma(7)$ but ends at $w_2 \in [A_{12},A_1)$, not in the interval $[A_1,A_2)$. Note that $\g_3$ intersects the corner image $\F_2$ and that $T_1 \gamma_2$ is exactly $U_2^{-1} \gamma_3$.

\subsection{Example 3: multiple geometric codes.}\label{mult geom} 

The axis of $U_{10}$ goes from $M_4$ to $M_{10}$ and passes directly through the vertex $V_{10}$ as it exits $\F$.

By convention, each vertex $V_i$ is considered to be part of side $i$. Using this convention, the geometric code of $\g$ should start with $\sigma(10) = 4$ (see Figure~\ref{fig coding examples}(c), in which side $10$ is shown with a thick black arc). Then $\g_1 = T_{10} \g_0$ is seen to pass through vertex $V_5$, so we code with $\sigma(5) = 3$ (side $5$ also has a thick black arc in Figure~\ref{fig coding examples}(c)). The next geodesic is $\g_2 = T_5 \g_1 = T_5 T_{10} \g_0 = U_4 \g_0$, which is exactly $\g_0$ because $U_4$ fixes $M_4$ and $M_{10}$ by Lemma~\ref{U fixes M}. The conventional geometric code of $\g$ is therefore $\sequence[G]{4, 3}$.

Now consider a geodesic $\g'$ very close to $\g = M_4M_{10}$ that exits $\F$ through side $9$ instead of side $10$, see Figure~\ref{fig coding examples}(d). Its geometric code starts with $\sigma(9) = 11$, and then $\g_1' = T_9 \g'$ will exit through side $10$, so the code continues with $\sigma(10) = 4$. The geodesic $\g_2' = T_{10} T_9 \g'$ will be somewhat close to $\g$, but later iterates $\g_{2k}'$ will eventually stop exiting through side~$9$. However, taking $\g'$ sufficiently close to $\g$ will give geodesics whose forward geometric codes begin with arbitrarily many repetitions of $11$ and $4$. For example, there might be a geodesic with code 
\[ \sequence[G]{\ldots,5,7,\;\;11,4,11,4,11,4,11,4,\;\;8,3,1,\ldots}. \]
The repeating code $\sequence[G]{11,4}$ will not be the admissible geometric code of this geodesic (or in fact any geodesic), but it is in the closure of the space of admissible geometric codes. Note that the transformation $T_{11}T_4$ is exactly equal to $T_4 T_3$ by \eqref{eq:Ui}, as both represent $U_{10}$.

The codes $\sequence[G]{4,3}$ and $\sequence[G]{11,4}$ both code the geodesic $M_4M_{10}$. Using one of $\sequence[G]{4,3}$ or $\sequence[G]{11,4}$ in the future and the other in the past gives two non-periodic codes as well.

\subsection{Example 4: multiple arithmetic codes.}\label{mult arith}

The repelling and attracting fixed points of $T_4T_5T_2(z)$ are, respectively, $e^{-1.07822 \,i}$ and $e^{-2.86313 \,i}$.
 The repelling point $w_0$ is in $(P_{11},Q_{11})$, and for our particular choice of $\A$ in these examples it is exactly~$A_{11}$.

In Figure~\ref{fig coding examples}(e), the interval $[A_{11},A_{12})$ is shown with a thick black arc. Using the convention that $T_i$ be applied to $A_i$, we have
\[
\begin{array}{r@{\;=\;}r@{\quad\text{because }}l}
    \gamma_1 & T_{11} \gamma_0  & w_0 \in [A_{11},A_{12}) \\
    \gamma_2 & T_{10} T_{11} \gamma_0 & w_1 \in [A_{10},A_{11}) \\
    \gamma_3 & T_{12} T_{10} T_{11} \gamma_0 & w_2 \in [A_{12},A_1)
\end{array}
\] and $\gamma_3 = \gamma_0$.  The arithmetic code is thus
\[ [\gamma]_\A = \sequence{\sigma(11),\sigma(10),\sigma(12)} = \sequence{9,4,2}. \]

Now consider a geodesic $\g'$ very close to $\g$ but with attracting endpoint slightly clockwise of $A_{11}$, see Figure~\ref{fig coding examples}(f). Let us denote the sequence of its iterates under $F_\A$ by $\g_1'$, $\g_2'$, etc.
\[
\begin{array}{r@{\;=\;}r@{\quad\text{because }}l}
    \gamma_1' & T_{10} \gamma_0'  & w_0' \in [A_{10},A_{11}) \\
    \gamma_2' & T_{3} T_{10} \gamma_0' & w_1' \in [A_{3},A_{4}) \\
    \gamma_3' & T_{12} T_{3} T_{10} \gamma_0' & w_2' \in [A_{12},A_1)
\end{array}
\]
The geodesic $\g_3'$ will be somewhat close to $\g$, but later iterates $\g_{3k}'$ will eventually not end in $[A_{10},A_{11})$. However, taking $\g'$ sufficiently close to $\g$ will give geodesics whose forward arithmetic codes have arbitrarily many repetitions of \[ \sequence[]{\sigma(10),\sigma(3),\sigma(12)} = \sequence[]{4,5,2}. \]
The repeating code $\sequence{4,5,2}$ will not be the admissible arithmetic code of this geodesic (or in fact any geodesic), but it is in the closure $X_\A$ of the space of admissible arithmetic codes.

The codes $\sequence{9,4,2}$ and $\sequence{4,5,2}$ both code the geodesic $\g$. Using one of $\sequence{9,4,2}$ or $\sequence{4,5,2}$ in the future and the other in the past gives two non-periodic codes as well.

It is worth noting that the transformations $T_4T_5T_2$ and $T_9T_4T_2$ are identical: restating~\eqref{r13} as $T_{\rho(i)} T_i = (T_{\rho^2(i)})^{-1} (T_{\rho^3(i)})^{-1} = T_{\sigma\rho^2(i)} T_{\sigma\rho^3(i)}$, we have that
\[ (T_4 T_5)T_2 = (T_{\rho(5)} T_5)T_2 = (T_{\sigma\rho^2(5)} T_{\sigma\rho^3(5)})T_2 = (T_{\sigma(11)} T_{\sigma(10)})T_2 = (T_9 T_4)T_2. \]

\section{Parameters admitting Markov/sofic partitions}\label{sec Markov}

Adler and Flatto~\cite{AF91} show that the symbolic system associated to the map we call $F_\P : \Omega_\P \to \Omega_\P$ is sofic with respect to the alphabet $\Ab = \{1,2,\ldots,8g-4\}$. This is not always possible for $F_\A : \Omega_\A \to \Omega_\A$, but there are several examples of $\A$ for which the shift is sofic. In this section and the next, we first give a sufficient condition for sofic shifts to occur, and then we give some examples.

As described in~\cite[Appendix C]{AF91}, sofic systems are obtained from Markov ones by amalgamation of the alphabet. Conversely, Markov is obtained from sofic by refinement of the alphabet, which in this case is realized by splitting each symbol $i \in \Ab$ into three symbols $i_1, i_2, i_3$. A partition of $\Omega_\A$ whose shift is Markov with respect to the \emph{extended alphabet} $\{\, i_k : i \in \Ab, k=1,2,3 \,\}$ will be sofic with respect to the alphabet $\Ab$.
\begin{remark}
    The value $U^{-1}_i A_i$ is a ``cycle end'' (see~\cite[Sec.~3]{KU17}) because
    \[ T_k B_k = T_k T_{\sigma(k-1)} A_{\sigma(k-1)} = U^{-1}_{\sigma(k-1)} A_{\sigma(k-1)}. \]
\end{remark}

\begin{prop}
    \label{Markov if} Let $\A$ have the short cycle property. A Markov partition (with respect to the extended alphabet) exists for $F_\A : \Omega_\A \to \Omega_\A$ if for all~$i \in \A$ there exists a~$j \in \A$ such that $U_i^{-1}A_i \in \{ A_j, B_j, C_j \}$.
\end{prop}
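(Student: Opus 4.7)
The plan is to refine the partition of $\Omega_\A$ into $8g-4$ horizontal strips by adding two extra $y$-cuts in each strip, and to verify that the resulting refinement is a Markov partition relative to the extended alphabet. Concretely, for each $i\in\Ab$ the strip $S_i:=\Omega_\A\cap(\Sb\times[A_i,A_{i+1}))$ is divided by the $y$-levels $B_i$ and $C_i$ into at most three horizontal sub-rectangles, which I label $R_{i_1},R_{i_2},R_{i_3}$. No further cuts in the $x$-direction are needed: the rectangular structure of $\Omega_\A$ already supplies vertical walls at the $x$-levels $P_k$ and $Q_{k+1}$.

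Since $F_\A|_{S_i}=T_i$ is expanding in $y$ and contracting in $x$, the Markov condition reduces to two checks. First, the horizontal boundaries of each $R_{i_k}$ must map to $y$-levels of the refined partition. The four relevant $y$-values are $A_i$, $A_{i+1}$, $B_i$, $C_i$. From the formulas $B_k=T_{\sigma(k-1)}A_{\sigma(k-1)}$ and $C_k=T_{\sigma(k+1)}A_{\sigma(k+1)+1}$ one reads off at once that $T_iA_i=B_{\sigma(i)+1}$ and $T_iA_{i+1}=C_{\sigma(i)-1}$, so the outer boundaries of $S_i$ go to partition $y$-levels automatically. For the internal cut at $B_i$, the remark preceding the proposition gives $T_iB_i=U^{-1}_{\sigma(i-1)}A_{\sigma(i-1)}$. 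A parallel computation, using the two equivalent expressions for $U_j$ in \eqref{eq:Ui} together with $\rho(i+1)=\tau\sigma(i)$ and $\sigma\tau=\tau\sigma$ from Lemma~\ref{tau properties}, shows that $T_iC_i=U^{-1}_{\sigma(i+1)+1}A_{\sigma(i+1)+1}$. The hypothesis of the proposition then forces both $T_iB_i$ and $T_iC_i$ into $\{A_k,B_k,C_k\}$, as required.

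For the second check, the vertical walls of each $R_{i_k}$ lie on the $x$-boundary of $\Omega_\A$, which consists of the corner $x$-values $P_k$ and $Q_{k+1}$. Since $F_\A(\Omega_\A)=\Omega_\A$ and Proposition~\ref{T images} asserts that $T_i$ permutes the relevant collection of $P,Q$-values, each vertical wall of $R_{i_k}$ is mapped to a vertical wall of $\Omega_\A$. Combining the two checks, the image $T_i(R_{i_k})$ is cut by the refined partition into full-height vertical strips, which is precisely the Markov condition for the hyperbolic map $F_\A$ on the extended alphabet. Amalgamating the subscripts $1,2,3$ into the single symbol $i$ then presents the original coding as a factor of a Markov shift, i.e., a sofic shift.

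I expect the main obstacle to be the algebraic identification $T_iC_i=U^{-1}_{j}A_{j}$ for $j=\sigma(i+1)+1$. The companion identity for $T_iB_i$ is essentially furnished by the remark above, but the one for $T_iC_i$ requires carefully matching $T_iT_{\sigma(i+1)}$ against one of the factorizations of $U_j$ in \eqref{eq:Ui} and repeatedly invoking Lemma~\ref{tau properties}. Once both identities are in hand, what remains is a direct boundary-tracking verification.
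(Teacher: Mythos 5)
Your proposal is correct and follows essentially the same route as the paper: split each strip $\Sb\times[A_i,A_{i+1})$ at the levels $B_i$ and $C_i$ into three rectangles, observe that $T_iA_i=B_{\sigma(i)+1}$ and $T_iA_{i+1}=C_{\sigma(i)-1}$ automatically, identify the images of the interior cuts as the cycle ends $U_j^{-1}A_j$ (which the hypothesis places at partition $y$-levels), and dispose of the vertical sides via the $\P\cup\Q$ structure and Proposition~\ref{T images}. The identity $T_iC_i=U^{-1}_{\sigma(i+1)+1}A_{\sigma(i+1)+1}$ that you flag as the main obstacle does go through exactly as you anticipate, since $\sigma(i+1)+1=\rho(i+1)=\tau(\sigma(i))$ by Lemma~\ref{tau properties} and $U_j^{-1}=T_{\sigma(j-1)-1}T_{j-1}$ with $j-1=\sigma(i+1)$ gives $T_iT_{\sigma(i+1)}$.
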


\begin{proof}
    Following the notion of the ``fine partition'' in~\cite{AF91}, we split each strip of $\Omega_\A $ with $w \in [A_i,A_{i+1})$ into three rectangles $R_{i_1}$, $R_{i_2}$, and $R_{i_3}$. Assuming that $C_i \in (A_i,B_i]$, define
    \begin{align*}
        R_{i_1} &:= [Q_{i+1},P_{i-1}] \times [A_i,C_i] \\
        R_{i_2} &:= [Q_{i+2},P_{i-1}] \times [C_i,B_i] \\
        R_{i_3} &:= [Q_{i+2},P_i] \times [B_i,A_{i+1}]
    \end{align*}
    (if $B_i \in (A_i,C_i)$ instead, the process is very similar except that $[A_i,A_{i+1}]$ is partitioned into $[A_i,B_i]$, $[B_i,C_i]$, and $[C_i,A_{i+1}]$). The left side of Figure~\ref{fig fine} shows the partition \mbox{$\{\, R_{i_k} : i \in \Ab, k=1,2,3 \,\}$} when each $A_i$ is the midpoint of $[P_i,Q_i]$. Note that each $R_{i_2}$ is extremely thin.

In accordance with~\cite[Theorem 7.9]{A98}, it is sufficient to prove that for any pair of distinct symbols $i_k$ and $j_\ell$, the rectangles $F_\A(R_{i_k})$ and $R_{j_\ell}$ are either disjoint (that is, their interiors are disjoint) or intersect ``transversally,'' i.e., their intersection is a rectangle with two horizontal sides belonging to the horizontal boundary of $R_{j_\ell}$ and two vertical sides belonging to the vertical boundary of $F_\A(R_{i_k})$. The images $F_\A(R_{i_k})$ are
    \begin{align*}
        F_\A(R_{i_1}) &= [T_i Q_{i+1},T_i P_{i-1}] \times [T_i A_i,T_i C_i] \\
        &= [P_{\sigma(i)}, P_{\sigma(i)+1}] \times [B_{\sigma(i)+1}, U^{-1}_{\tau(\sigma(i))}A_{\tau(\sigma(i))}] \\
        F_\A(R_{i_2}) &= [T_i Q_{i+2},T_i P_{i-1}] \times [T_i C_i,T_i B_i] \\
        &= [Q_{\sigma(i)}, P_{\sigma(i)+1}] \times [U^{-1}_{\tau(\sigma(i))}A_{\tau(\sigma(i))}, U^{-1}_{\tau(\sigma(i))+1}A_{\tau(\sigma(i))+1}] \\
        F_\A(R_{i_3}) &= [T_i Q_{i+2},T_i P_i] \times [T_i B_i,T_i M_{i+1}] \\
        &= [Q_{\sigma(i)}, Q_{\sigma(i)+1}] \times [U^{-1}_{\tau(\sigma(i))+1}A_{\tau(\sigma(i))+1}, C_{\sigma(i)-1}].
    \end{align*}
    From this, it can be seen that $F_\A(R_{i_k}) \cap R_{j_\ell}$, if it is non-empty, will be a rectangle with vertical sides from $\P \cup \Q$ as required since the rectangles $R_{i_1},R_{i_2},R_{i_3}$ also use $\P \cup \Q$ for the values of their vertical sides. Let $m = \tau(\sigma(i))$. The horizontal sides may be $B_{\sigma(i)+1}$ or $C_{\sigma(i)-1}$ or may be one of the values $U^{-1}_{m}A_{m}$ or $U^{-1}_{m+1}A_{m+1}$, which in general are not values of the horizontal sides of the original partition elements. Thus the partition \mbox{$\{\, R_{i_k} : i \in \Ab, k=1,2,3 \,\}$} is Markov exactly when the values $U^{-1}_m A_m$ and $U^{-1}_{m+1}A_{m+1}$ are already horizontal sides of $R_{i_1},R_{i_2},R_{i_3}$. This happens exactly when $U^{-1}_m A_m$ and $U^{-1}_{m+1} A_{m+1}$ are equal to some $A_j$, $B_j$, or $C_j$.
\end{proof}

\begin{figure}[ht]
    \begin{center}
        \includegraphics[width=0.4\textwidth]{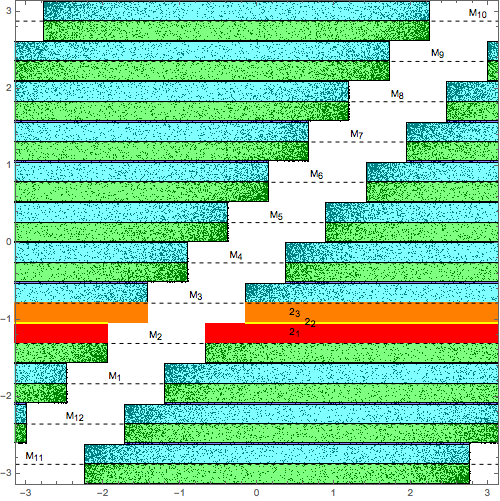} \;\;\raisebox{7em}{$\stackrel{F_{\! \bar M}\,}{\longrightarrow}$}\quad \includegraphics[width=0.4\textwidth]{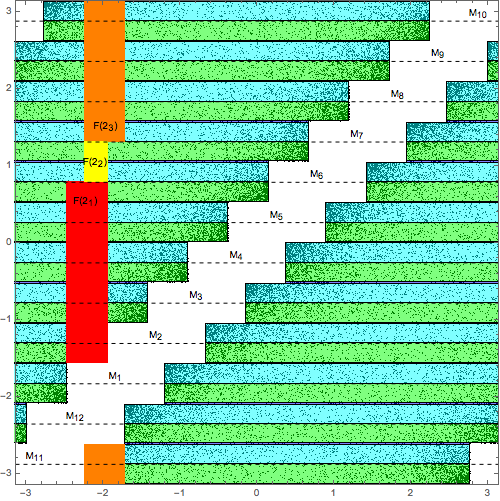}
    \end{center}\vspace*{-0.5em}
    \caption{Markov partition of $\Omega_{\bar M}$ for genus $g=2$.} \label{fig fine}
\end{figure}

The most notable example of a partition $\A$ satisfying the condition of Proposition~\ref{Markov if} is the ``midpoint'' setup, in which each $A_i$ is exactly halfway between $P_i$ and $Q_i$. We label these midpoints $M_i$ and write $F_{\bar M}$ for this specific case of $F_\A$. Note that each $M_i$ does have the short cycle property, so all the previous results on short cycles hold for $\A = \bar M$.

To show that $\A = \bar M$ satisfies the condition of Proposition~\ref{Markov if}, we need that $U^{-1}_i M_i$ is equal to some $M_j$, $B_j$, or $C_j$. In fact, $U^{-1}_i M_i = M_i$ by the following lemma.
\begin{lem}
    \label{U fixes M} The fixed points of $U_i$ are $M_i$ and $M_{\tau(i)}$.
\end{lem}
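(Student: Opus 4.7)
The plan is to show that $U_i$ is a hyperbolic element whose axis is the diameter of $\D$ through the antipodal vertices $V_i$ and $V_{\tau(i)}$, so that its two boundary fixed points are the endpoints of that diameter on $\Sb$, namely $M_i$ and $M_{\tau(i)}$.

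First I would verify that $U_i(V_{\tau(i)}) = V_i$. Using the formula $U_i^{-1} = T_{\rho(i)} T_i$ together with the vertex identity \eqref{r12} and Lemma~\ref{tau properties}(1), one computes
\[ U_i^{-1}(V_i) = T_{\rho(i)} T_i(V_i) = T_{\rho(i)}(V_{\rho(i)}) = V_{\rho^2(i)} = V_{\tau(i)}. \]
Since $\tau(i) = i + \tfrac{1}{2}(8g-4)$ and the vertices of the regular polygon $\F$ are equally spaced around the origin, $V_i$ and $V_{\tau(i)}$ are diametrically opposite, so the geodesic $L_i$ through them is a Euclidean diameter of $\D$ passing through $0$.

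Next I would show that the axis of $U_i$ passes through the origin by a symmetry argument. Let $R$ be the Euclidean rotation of $\bar\D$ by angle $\pi$ around the origin. Since $\F$ is a regular $(8g-4)$-gon centered at $0$ and $8g-4$ is even, $R$ preserves $\F$ and permutes its sides by the shift $j \mapsto \tau(j)$, which means $R$ normalizes $\G$ with $R T_j R^{-1} = T_{\tau(j)}$. Combining this with $\rho\tau = \tau\rho$ (from Lemma~\ref{tau properties}) and the formula $U_j = T_j^{-1}T_{\rho(j)}^{-1}$, we get
\[ R\, U_i\, R^{-1} = T_{\tau(i)}^{-1} T_{\rho\tau(i)}^{-1} = U_{\tau(i)} = U_i^{-1}. \]
Hyperbolic elements $U_i$ and $U_i^{-1}$ share the same axis, so $R$ sends the axis of $U_i$ to itself as a set; but $R$ is a nontrivial half-turn whose unique fixed point is the origin, and a half-turn preserves a geodesic setwise if and only if the geodesic passes through its fixed point. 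Hence the axis of $U_i$ is a diameter.

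To identify this diameter with $L_i$, I would work in the disk after conjugating so that the axis of $U_i$ is the real diameter. A hyperbolic translation along the real axis has the form $z \mapsto (cz+s)/(sz+c)$ with $c=\cosh(d/2),\, s=\sinh(d/2)$; setting the image of a point $V$ equal to $-V$ forces $sV^2 + 2cV + s = 0$, whose only solutions are real. Thus a translation along a diameter can send a point to its antipode in $\D$ only if that point already lies on the diameter. Applying this with the pair $V_{\tau(i)}, V_i$ forces $V_{\tau(i)}$ to lie on the axis of $U_i$, so the axis is exactly $L_i$. Finally, reflection of $\bar\D$ in $L_i$ is a symmetry of the regular polygon fixing $V_i$ and swapping the two sides meeting at $V_i$; it therefore swaps $P_i$ and $Q_i$, so $L_i$ meets $\Sb$ at the midpoint $M_i$ of the arc $[P_i,Q_i]$ and, by the $R$-symmetry, at $M_{\tau(i)}$ on the opposite arc. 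The main obstacle is the clean verification that $R$ normalizes $\G$ with $RT_jR^{-1} = T_{\tau(j)}$ and that this yields $RU_iR^{-1}=U_i^{-1}$; once this is in hand, the axis argument and the reflection identification are short.
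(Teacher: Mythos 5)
Your proposal is correct, but it reaches the conclusion by a genuinely different route from the paper. You both begin by establishing $U_i(V_{\tau(i)})=V_i$ — your derivation via $U_i^{-1}(V_i)=T_{\rho(i)}T_i(V_i)=V_{\rho^2(i)}=V_{\tau(i)}$ using \eqref{r12} and Lemma~\ref{tau properties}(1) is a clean, purely algebraic alternative to the paper's isometric-circle bookkeeping. The divergence is in how the diameter $L$ through $V_i$ and $V_{\tau(i)}$ is shown to be $U_i$-invariant. The paper argues locally: $L$ bisects the right angle of $\F$ at $V_{\tau(i)}$, so $U_iL$ bisects the right angle of $U_i\F$ at $V_i$, which forces $U_iL$ to be the prolongation of $L$ into the corner image; invariance of the full geodesic is then immediate. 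You instead argue globally in two stages: the half-turn $R$ about the origin satisfies $RU_iR^{-1}=U_i^{-1}$, so the axis of $U_i$ is $R$-invariant and hence a diameter, and then the explicit computation with $z\mapsto(cz+s)/(sz+c)$ rules out an interior point off the axis being sent to its antipode, pinning the axis to $L$. This works — the conjugation identity $RT_jR^{-1}=T_{\tau(j)}$ that you flag does hold for the regular polygon, since $R$ shifts the labeled configuration $\{P_j,Q_j\}$ by $\tau$ and $\sigma$ commutes with $\tau$ (Lemma~\ref{tau properties}(3)), so both sides agree on three boundary points by Proposition~\ref{T images} — but it costs you an extra verification and an explicit matrix computation that the paper's angle-bisector observation avoids. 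Your closing identification of the endpoints of $L$ with $M_i$ and $M_{\tau(i)}$ via the reflection symmetry is the same fact the paper uses implicitly, and both arguments rely on the regularity of $\F$, which the paper's Remark 1.1 licenses.
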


\begin{proof}
    Consider the image under $U_i = T_{\sigma(i)} T_{\tau(i)-1}$ of the vertices $V_i$ and $V_{\tau(i)}$. To shorten notation, let $C_j$ be the isometric circle $P_jQ_{j+1}$, which is the extension of side $j$ of $\F$. Then each vertex $V_j$ is the intersection of $C_j$ and $C_{j-1}$. The map $T_{\tau(i)-1}$ sends $V_{\tau(i)}$ to $V_{\sigma(i)+1}$ because it maps the isometric circles $C_{\tau(i)-1}$ and $C_{\tau(i)}$ to $C_{\sigma(i)+1}$ and $C_{\sigma(i)}$, respectively. Then $T_{\sigma(i)}$ sends $V_{\sigma(i)+1}$ to $V_i$ because it maps $C_{\sigma(i)+1}$ and $C_{\sigma(i)}$ to $C_{i-1}$ and $C_i$, respectively. Thus $U_i(V_{\tau(i)}) = V_i$.

The geodesic connecting $V_i$ and $V_{\tau(i)}$ in the disk model is a Euclidean line segment through the origin. Call this segment $L$. The isometric circles $C_{\tau(i)-1}$ and $C_{\tau(i)}$ intersect perpendicularly at $V_{\tau(i)}$, and $L$ bisects the right angle $V_{\tau(i)-1}V_{\tau(i)}V_{\tau(i)+1}$. Thus the image $U_i L$ is a geodesic in $U_i \F$ that bisects the right angle $(U_i V_{\tau(i)-1})V_i(U_i V_{\tau(i)+1})$. This means that $U_i L$ is the extension of the line $L$ into $U_i\F$. 

The full geodesic containing $L$ is precisely the line connecting $M_i$ and $M_{\tau(i)}$, and this line must (as a set) be fixed by $U_i:\D \to \D$. Since $U_i$ maps $\partial \D = \mathbb S$ to itself, the fixed points of $U_i$ are precisely the two points $M_i$ and $M_{\tau(i)}$.
\end{proof}

\section{Examples of Markov/sofic setups} \label{sec examples of Markov}

We now give a few examples of choices for~$\A$ that satisfy the condition of Proposition~\ref{Markov if} and therefore have a symbolic system that is sofic with respect to the alphabet $\Ab = \{1,2,\ldots,8g-4\}$.

\subsection*{Example 1: midpoints.} Suppose $U^{-1}_i A_i = A_i$ for all $i$. Then by Lemma~\ref{U fixes M}, $A_i = M_i$ (since $M_{\tau(i)}$ is not in $(P_i,Q_i)$, it cannot be $A_i$). See Figure~\ref{fig fine} for the actual attractor and Markov partition when $g=2$.

\subsection*{Example 2.} Suppose $U^{-1}_i A_i = A_{i+1}$ for all $i$. Then we can build a sequence of equations
\begin{align*}
    A_{2} &= U_1^{-1} A_1 \\
    A_{3} &= U_2^{-1} A_2 = U_2^{-1} U_1^{-1} A_1 \\
    A_{4} &= U_3^{-1} A_3 = U_3^{-1} U_2^{-1} U_1^{-1} A_1 \\
    &\;\;\vdots \\
    A_1 &= U_{8g-4}^{-1} A_{8g-4} = U_{8g-4}^{-1} U_{8g-5}^{-1} \cdots U_2^{-1} U_1^{-1} A_1
\end{align*}
giving that $A_1$ has to be a fixed point of $U_1 U_2 \cdots U_{8g-4}$. By Lemma~\ref{product fp} below, this product does have an attracting fixed point in $[b_1,a_1] \subset (P_1,Q_1)$, so we can choose $A_1$ to be this fixed point. The remaining $A_i$, $i\ne1$, will be the attracting fixed points of the cyclic product $U_i U_{i+1} \cdots U_{i-2} U_{i-1}$, where the indices are mod $8g-4$. By the cyclic permutation of indices in Lemma~\ref{product fp}, $A_i\in[b_i,a_i]$. Notice that $A_i$, $i\ne1$, will be shifts of $A_1$ by $(i-1)\smash{\raisebox{0.2em}{$\frac{\pi}{4g-2}$}}$ since each $U_i$ differs from $U_1$ by conjugation with a rotation.

\begin{lem}
    \label{product fp} The product $U_1 U_2 \cdots U_{8g-4}$
    has an attracting fixed point in $[b_1,a_1]$.
\end{lem}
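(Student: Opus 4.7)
The plan is to exploit the rotational symmetry of the regular polygon $\F$. Let $R$ denote the rotation of $\D$ by angle $\frac{2\pi}{8g-4}$, so that $R$ cyclically shifts indices $P_i \mapsto P_{i+1}$, $Q_i \mapsto Q_{i+1}$, $M_i \mapsto M_{i+1}$. Using Lemma~\ref{U fixes M}, the hyperbolic M\"obius transformations $R\, U_i\, R^{-1}$ and $U_{i+1}$ share the same fixed points $M_{i+1}$, $M_{\tau(i+1)}$ and the same translation length, hence $U_{i+1} = R\, U_i\, R^{-1}$, and inductively $U_i = R^{i-1}\, U_1\, R^{-(i-1)}$.

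A short induction gives $(U_1 R)^k = U_1 U_2 \cdots U_k\, R^k$ for all $k \geq 1$. Since $R^{8g-4} = \Id$, setting $k = 8g-4$ yields
\[
U_1 U_2 \cdots U_{8g-4} \;=\; (U_1 R)^{8g-4}.
\]
Thus it suffices to locate the attracting fixed point of the single M\"obius transformation $V := U_1 R$, since $V$ and $V^{8g-4}$ share the same fixed points on $\Sb$.

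Next I would show that $V$ is hyperbolic. As an orientation-preserving isometry of $\D$, $V$ is elliptic, parabolic, or hyperbolic; since $V^{8g-4} \in \G$ and $\G$ is torsion-free and cocompact, $V^{8g-4}$ is either $\Id$ or hyperbolic. Non-triviality of $U_1 U_2 \cdots U_{8g-4}$ can be verified by tracking the image of $0 \in \D$ through the successive corner images $\F_i$, which produces a definite hyperbolic displacement away from the origin. This rules out the elliptic and parabolic cases for $V$, so $V$ has exactly two fixed points on $\Sb$, one attracting and one repelling.

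Finally I would locate the attracting fixed point in $[b_1,a_1]$ by exhibiting an invariant sub-arc. Since $V = U_1 R$ and $R[b_1,a_1] = [b_2,a_2]$, the desired inclusion $V([b_1,a_1]) \subsetneq [b_1,a_1]$ reduces to showing $U_1([b_2,a_2]) \subsetneq [b_1,a_1]$. Using the explicit form $U_1 = T_{\sigma(1)} T_{\tau(0)}$ from~\eqref{eq:Ui} and the table of images of $P_j$, $Q_j$ in Proposition~\ref{T images}, I would compute $U_1(b_2)$ and $U_1(a_2)$ and verify both endpoints (hence the whole arc) land strictly inside $[b_1,a_1]$. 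Once the strict inclusion holds, the nested sequence $V^n([b_1,a_1])$ collapses to a single point---the attracting fixed point of $V$---which then automatically lies in $[b_1,a_1]$. The main obstacle is this final endpoint computation: it requires carefully tracing two generators on specific boundary points and confirming an arc-inclusion on $\Sb$. The cyclic-reduction step above is what makes this tractable, cutting the bookkeeping from $8g-4$ composed maps down to two.
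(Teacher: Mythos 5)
Your architecture is genuinely different from the paper's. The paper never introduces the rotation $R$; it chains the containments $U_i\big([b_{i+1},a_{i+1}]\big)\subset[b_i,a_i]$ directly down the descending product $U_1U_2\cdots U_{8g-4}$, starting from an arbitrary $x\in[b_1,a_1]$, and then extracts the attracting fixed point as a limit point of the orbit in the compact arc. Your telescoping identity $(U_1R)^{8g-4}=U_1U_2\cdots U_{8g-4}$ is correct and does collapse the bookkeeping to a single arc inclusion, and your attention to hyperbolicity is if anything more careful than the paper's, which tacitly assumes the product has a unique attracting fixed point. (In fact, once you have the compact inclusion $V\big([b_1,a_1]\big)\subset(b_1,a_1)$, hyperbolicity is automatic: the inclusion forces a fixed point on $\Sb$, ruling out elliptic, and a parabolic cannot map a closed arc into its own interior; so the excursion through $V^{8g-4}\in\G$ and torsion-freeness, and the vague ``tracking of $0$,'' are not needed.) One soft spot in your reduction: ``same axis and same translation length'' does not determine a hyperbolic element --- it could be the inverse. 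The clean justification of $RU_iR^{-1}=U_{i+1}$ is that both elements send $\F$ to the unique corner image touching $\F$ only at $V_{i+1}$ and both send $V_{\tau(i+1)}$ to $V_{i+1}$ (cf.\ the proof of Lemma~\ref{U fixes M}); the paper asserts this conjugation fact without proof in Section~\ref{sec examples of Markov}.

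The genuine gap is that the step you defer as ``the main obstacle'' --- verifying $U_1\big([b_2,a_2]\big)\subsetneq[b_1,a_1]$ --- is essentially the entire content of the paper's proof, and your proposal does not carry it out. It does close, and more cleanly than by computing $U_1(b_2)$ and $U_1(a_2)$ from scratch: starting from $a_i=T_{\sigma(i)}P_{\rho(i)+1}$ and $b_i=T_{\sigma(i-1)}Q_{\theta(i-1)}$ (from \cite{KU17}), the paper uses Lemma~\ref{tau properties} to rewrite these as
\[
a_i=U_iP_{\tau(i)-2},\qquad b_i=U_iQ_{\tau(i)+2},
\]
so $U_i$ carries the long arc $[Q_{\tau(i)+2},P_{\tau(i)-2}]$ onto $[b_i,a_i]$. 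Since $[b_{i+1},a_{i+1}]\subset(P_{i+1},Q_{i+1})$ sits inside that long arc, the inclusion $U_i\big([b_{i+1},a_{i+1}]\big)\subset[b_i,a_i]$ --- in particular your case $i=1$ --- follows at once, with strictness at both endpoints because $b_2\neq Q_{\tau(1)+2}$ and $a_2\neq P_{\tau(1)-2}$. Until some such computation is supplied, your proposal establishes only the reduction, not the lemma.
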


\begin{proof}
    Recall that $a_i=T_{\sigma(i)}P_{\rho(i)+1}$ and $b_i=T_{\sigma(i-1)}Q_{\theta(i-1)}$, and any $A_i\in[b_i,a_i]$ satisfies the short cycle property (see~\cite[Corollary 8.2(i)]{KU17}). Using Proposition~\ref{tau properties} we obtain
    \[
    \begin{aligned}
        a_i&=T_{\sigma(i)}P_{\rho(i)+1}=T_{\sigma(i)}P_{\sigma(\tau(i-1))+1}= T_{\sigma(i)}T_{\tau(i-1)}P_{\tau(i)-2}=U_iP_{\tau(i)-2}\\
        b_i&=T_{\sigma(i-1)}Q_{\theta(i-1)}=T_{\sigma(i-1)}Q_{\sigma(\tau(i))}= T_{\sigma(i-1)}T_{\tau(i)}Q_{\tau(i)+2}=U_iQ_{\tau(i)+2}
    \end{aligned}
    \]
    Then $U_i([Q_{\tau(i)+2},P_{\tau(i)-2}])\subset[b_i,a_i]$. Take $x\in[b_1,a_1]$ and look at its orbit under a descending product of maps $U_i$:
    \begin{align*}
        U_{8g-4}(x) &\in [b_{8g-4},a_{8g-4}] \\
        U_{8g-5}U_{8g-4}(x) &\in [b_{8g-5},a_{8g-5}] \\
        U_{8g-6}U_{8g-5}U_{8g-4}(x) &\in [b_{8g-6},a_{8g-6}] \\
        &\;\;\vdots \\
        U_1 U_2 \cdots U_{8g-4}(x) &\in [b_{1},a_{1}].
    \end{align*}
    Continuing, we obtain a sequence of points $x_n=(U_1 U_2 \cdots U_{8g-4})^n(x)\in[b_1,a_1]$, which by compactness has a limit point in the set, and this point is the attracting fixed point since the attracting fixed point is unique.
\end{proof}
Examples 1 and 2 are ``equally spaced'' in the sense that the arclength $\ell(A_i,A_{i+1})$ is the same for each $i$. The following example does not have this property. 

\subsection*{Example 3.} Suppose $U^{-1}_i A_i = A_i$ for odd $i$ and $U^{-1}_i A_i = A_{i+1}$ for even $i$. Then by Lemma~\ref{U fixes M}, $A_i = M_i$ for odd $i$. For even $i$, we get that $A_i = U_i M_{i+1}$ simply by applying $U_i$ to both sides of $U_i^{-1} A_i = M_{i+1}$. Since $M_{i+1}$ is outside of the isometric circle for $U_i$, we see that indeed $A_i=U_i M_{i+1} \in (P_i,Q_i)$.

\section{Dual codes} \label{dual codes}

In~\cite[Sec.~5]{KU12}, Katok and Ugarcovici discuss cases in which expansions using two different parameters are ``dual''. The corresponding definition in current setup would be the following:
 
\begin{defn}
    Let $\A=\{A_i\}$ and $\A'=\{A_i'\}$ be two partitions (not necessary having the short cycle property) such that $F_\A$ and $F_{\A'}$ have attractors $\Omega_\A$ and $\Omega_{\A'}$, respectively, with finite rectangular structure, and let $\phi(x, y) = (y, x)$ be the reflection of the plane about the line $y = x$. We say that $\A$ and $\A'$ are \emph{dual} (equivalently, each is the \emph{dual} of the other) if $\Omega_{\A'}=\phi(\Omega_\A)$ and the following diagram is commutative:
    \begin{equation}
        \label{cd:jux} \commutativeDiagram{\Omega_\A}{\phi}{\Omega_{\A'}}{F^{-1}_\A}{F_{\A'}}{\Omega_\A}{\phi}{\Omega_{\A'}}.
    \end{equation}
\end{defn}
The motivation for studying dual codes is given in the next result. 
In order to state it, we must first introduce the \emph{backward $A'$-expansion} of $u_0$, defined as
\begin{equation}
    \label{past} [u_0]_{\A'}^- := \sequence[\A']{ m_{-1}, m_{-2}, m_{-3},\dots },
\end{equation}
where, for $k<0$, $m_k=i\in\Ab$ if $f_{\A'}^{-k+1}(u_0)\in [A'_i,A'_{i+1})$.
\begin{thm}
    Suppose $\A$ admits a dual $\A'$, both $F_\A$ and $F_{\A'}$ have attractors $\Omega_\A$ and $\Omega_{\A'}$, respectively, with finite rectangular structure, and let $\g$ be an $\A$-reduced geodesic from $u$ to $w$. Then the coding sequence
    \begin{equation}
        \label{codingseq1} [\g]_\A =\sequence{ \dots,n_{-2},n_{-1},n_0,n_1,n_2,\dots },\quad n_i\in\Ab,
    \end{equation}
    is obtained by juxtaposing the forward $\A$-expansion of $w$, $[w]_\A^+$, and the backward $\A'$-expansion of $u$, $[u]_{\A'}^-$. This property is preserved under the left shift of the sequence.
\end{thm}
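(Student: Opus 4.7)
The positive half of the code $[\g]_\A$ is by construction the forward $\A$-expansion $[w]_\A^+$, so no work is required for the nonnegative indices; the content of the theorem lies entirely in showing that the negative half coincides with the backward $\A'$-expansion of $u$.

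My first step is to iterate the duality diagram~(\ref{cd:jux}) to obtain $\phi\circ F_\A^{-k}=F_{\A'}^k\circ\phi$ for every $k\ge 0$ and apply it to the point $(u_0,w_0)\in\Omega_\A$, yielding
\[
(w_{-k},u_{-k})\;=\;\phi(u_{-k},w_{-k})\;=\;F_{\A'}^k(w_0,u_0).
\]
Reading off the second coordinate gives $u_{-k}=f_{\A'}^k(u_0)$: the past orbit of the first coordinate of $\g$ under $F_\A$ is exactly the forward $f_{\A'}$-orbit of $u_0$.

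Next I identify the digit $n_{-k}$ for $k\ge 1$. By definition $n_{-k}=\sigma(i)$ where $w_{-k}\in[A_i,A_{i+1})$, and the transition $(u_{-k},w_{-k})\mapsto(u_{-k+1},w_{-k+1})$ under $F_\A$ is realized by the single M\"obius transformation $T_i$ acting on both coordinates. From the dual side, $F_{\A'}$ sends $(w_{-k+1},u_{-k+1})$ to $(w_{-k},u_{-k})$ via the branch $T_j$ determined by $u_{-k+1}\in[A'_j,A'_{j+1})$. Writing the transformation carrying $(u_{-k},w_{-k})$ to $(u_{-k+1},w_{-k+1})$ in these two equivalent ways forces $T_i=T_j^{-1}=T_{\sigma(j)}$, and hence $j=\sigma(i)=n_{-k}$. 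Since $u_{-k+1}=f_{\A'}^{k-1}(u_0)$ by the identification already established, $n_{-k}$ is precisely the index of the $\A'$-interval containing $f_{\A'}^{k-1}(u_0)$, i.e.\ the $k$-th symbol of $[u]_{\A'}^-$.

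Shift invariance of the juxtaposition is then immediate: replacing $\g$ by $F_\A\g$ replaces $(u_0,w_0)$ by $(u_1,w_1)$ and, through duality, replaces $(w_0,u_0)$ by $F_{\A'}^{-1}(w_0,u_0)$, so $[w]_\A^+$ left-shifts by one while $[u]_{\A'}^-$ extends by one symbol on the right, and the juxtaposed bi-infinite sequence undergoes a single left shift. The main technical obstacle I anticipate is the identification $T_i=T_{\sigma(j)}$: one must match the branch index of $F_\A^{-1}$ at $(u_{-k+1},w_{-k+1})$ with the branch index of $F_{\A'}$ at $(w_{-k+1},u_{-k+1})$. This is a consequence of commutativity of~(\ref{cd:jux}) and the essential bijectivity of $F_\A$ and $F_{\A'}$ on their attractors, but it requires some care to formulate as a symbol-level statement about the two expansions rather than merely an equality of group elements.
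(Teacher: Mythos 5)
Your proposal is correct and follows essentially the same route as the paper: apply the duality diagram $\phi\circ F_\A^{-1}=F_{\A'}\circ\phi$ to $(u_0,w_0)$, match the branch $T_i$ of $F_\A^{-1}$ (determined by $w_{-k}\in[A_i,A_{i+1})$) with the branch $T_j$ of $F_{\A'}$ (determined by $u_{-k+1}\in[A'_j,A'_{j+1})$) to get $T_j=T_i^{-1}=T_{\sigma(i)}$ and hence $m_{-k}=n_{-k}$, then treat the shift by reversing the diagram. The only cosmetic difference is that you run the iterated diagram at a general step $k$ where the paper does the base case and invokes induction.
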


\begin{proof}
    Let $(u,w)\in \Omega_\A$. Then $\phi(u,w)=(w,u)\in \Omega_{\A'}$. Using (\ref{cd:jux}) we obtain
    \[ F_{\A'}(w,u)=\phi\circ F^{-1}_{\A}(u,w)=(w_{-1},u_{-1}). \]
    Let $w_{-1}\in (A_i,A_{i+1})$. Then, by definition, the forward $\A$-expansion of $w_{-1}$ begins with $n_{-1}=\sigma(i)$, $f_\A =T_i$, and
    \[ F_\A(u_{-1}, w_{-1}) = (T_iu_{-1}, T_iw_{-1}) = (u,w). \]
    Therefore $w_{-1}=T^{-1}_iw=T_{\sigma(i)}w$.

	On the other hand, let
    \[ u = \sequence[\A']{m_{-1},m_{-2},\dots }. \]
    This means that $u\in [A'_{m_{-1}}, A'_{m_{-1}+1})$ and
    \[ F_{\A'}(w,u)=T_{m_{-1}}(w,u)=(T_{m_{-1}}w, T_{m_{-1}}u)=(w_{-1},u_{-1}). \]
    Therefore $T_{m_{-1}}=T_{\sigma(i)}$ which implies $m_{-1}=n_{-1}$. Continuing by induction, one proves that all digits of the ``past'' of the sequence (\ref{codingseq1}) are the digits of the backward $\A'$-expansion of $u$. 

	In order to see what happens under a left shift, we reverse the diagram to obtain
    \begin{equation}
        \label{cd:jux1} \commutativeDiagram{\Omega_\A}{\phi}{\Omega_{\A'}}{F_\A}{F^{-1}_{\A'}}{\Omega_\A}{\phi}{\Omega_{\A'}}.
    \end{equation}
    Let $(u,w)\in\Omega_\A$. Using (\ref{cd:jux1}) we obtain
    \[ F^{-1}_{\A'}(w,u)=\phi\circ F_{\A}(u,w)=(w_{1},u_{1}). \]
    Let $w\in (A_i,A_{i+1})$. Then, by definition, the forward $\A$-expansion of $w$ begins with \mbox{$n_{0}=\sigma(i)$}, $f_\A =T_i$, and
    \[ F_\A(u, w) = (T_iu, T_iw) = (u_1,w_1). \]
    Therefore $w_{1}=T_iw$.

	On the other hand, let
    \[ u_1 = \sequence[\A']{ m_{0},m_{-1},m_{-2},\dots }. \]
   	\begin{samepage}
    This means that $u_1\in [A'_{m_{0}}, A'_{m_{0}+1})$ and
    \[ F_{\A'}(w_1,u_1)=T_{m_{0}}(w_1,u_1)=(T_{m_{0}}w_1, T_{m_{0}}u_1)=(w,u). \]
    Therefore $w_1=T^{-1}_{m_0}w=T_{\sigma(m_0)}w$. Thus $T_{\sigma(m_{0})}=T_{i}$ which implies $m_{0}=n_{0}=\sigma(i)$.
	\end{samepage}
\end{proof}

\begin{figure}[ht]
    \begin{multicols}{2}
    		\includegraphics{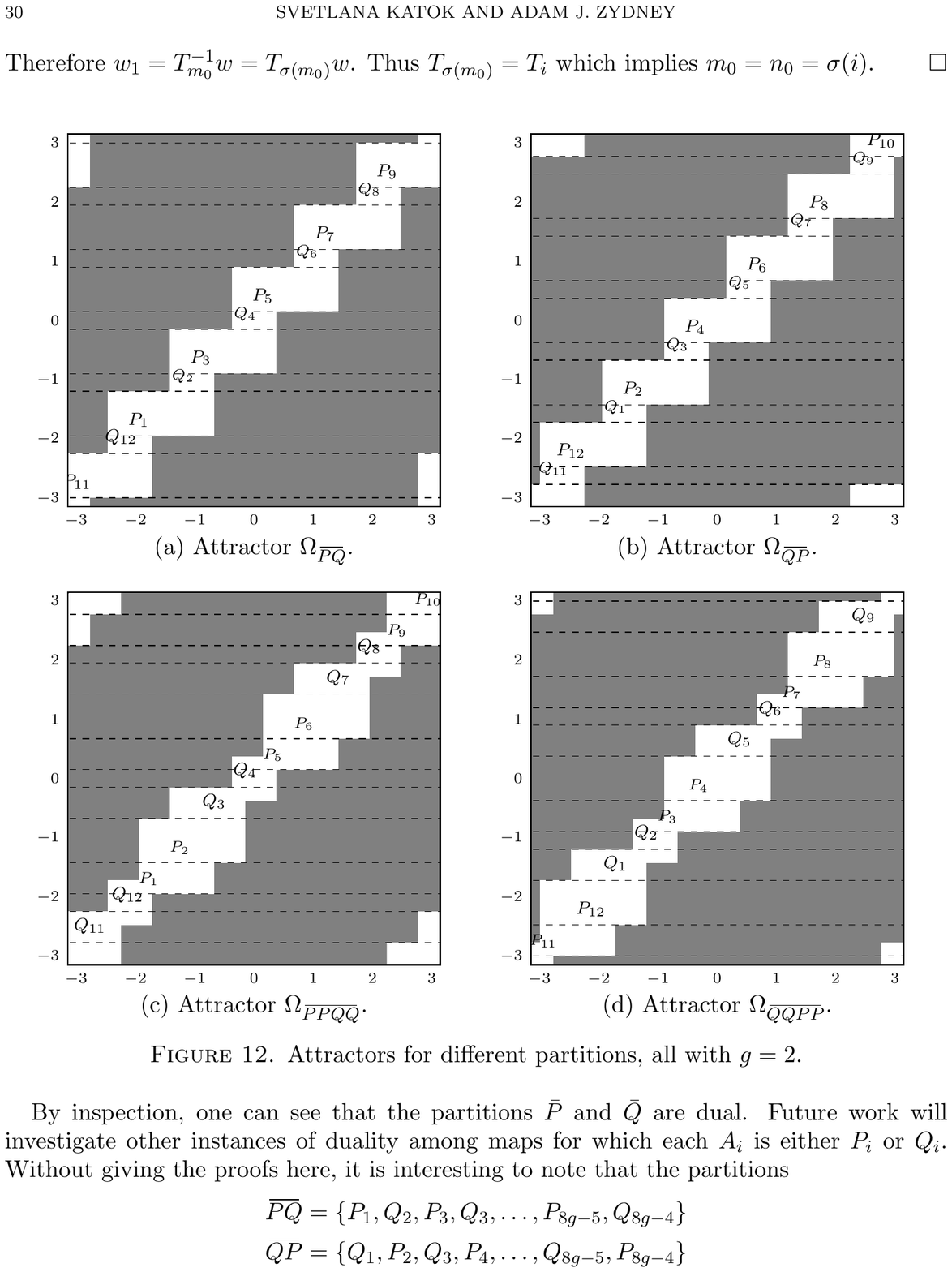} \\
    		{(a) Attractor $\Omega_{\,\overline{\!PQ\!}\,}$.} \\[1em]
    	
    		\includegraphics{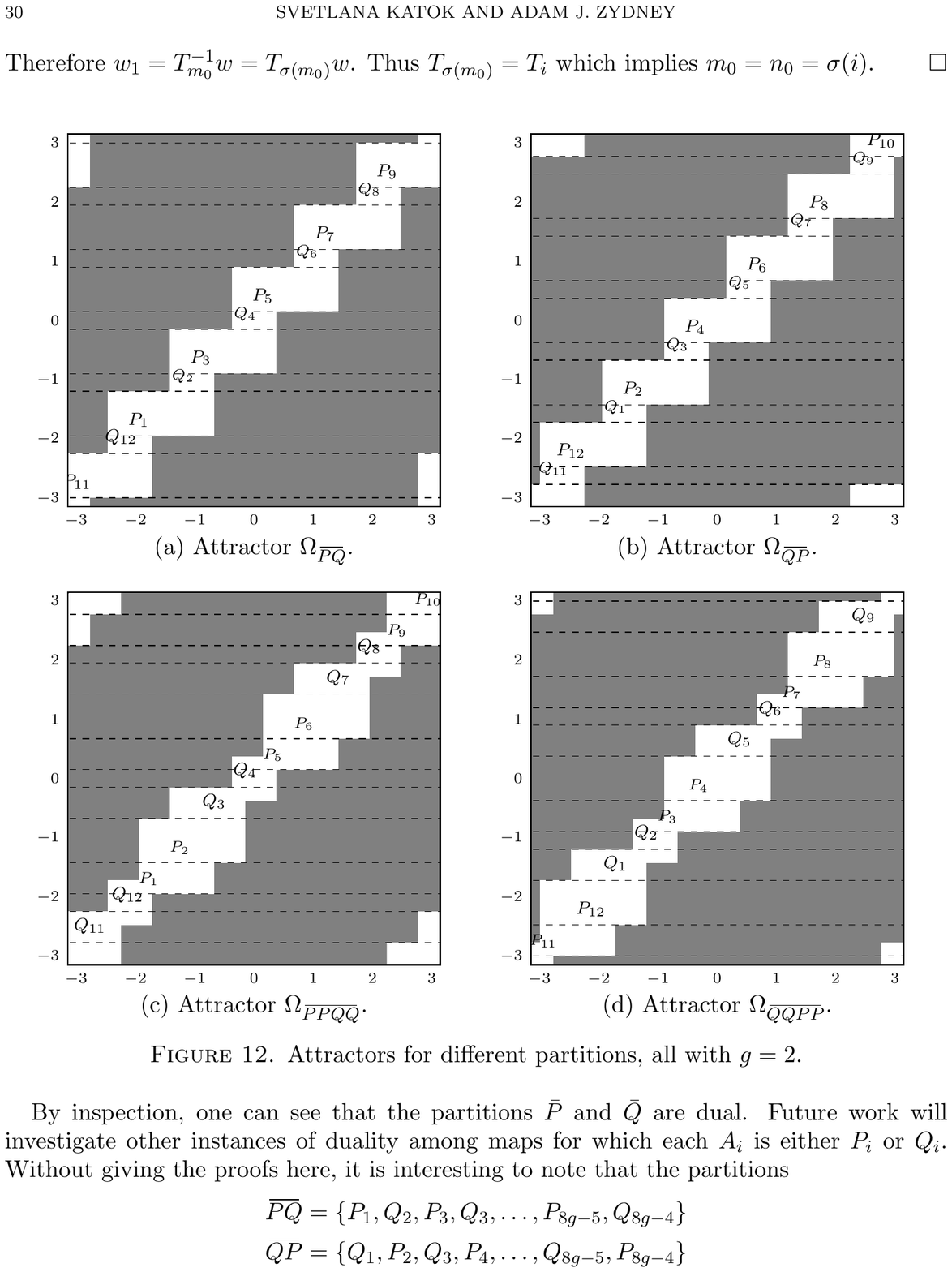} \\
    		{(c) Attractor $\Omega_{\,\overline{\!PPQQ\!}\,}$.}
		
		\columnbreak
		
			\includegraphics{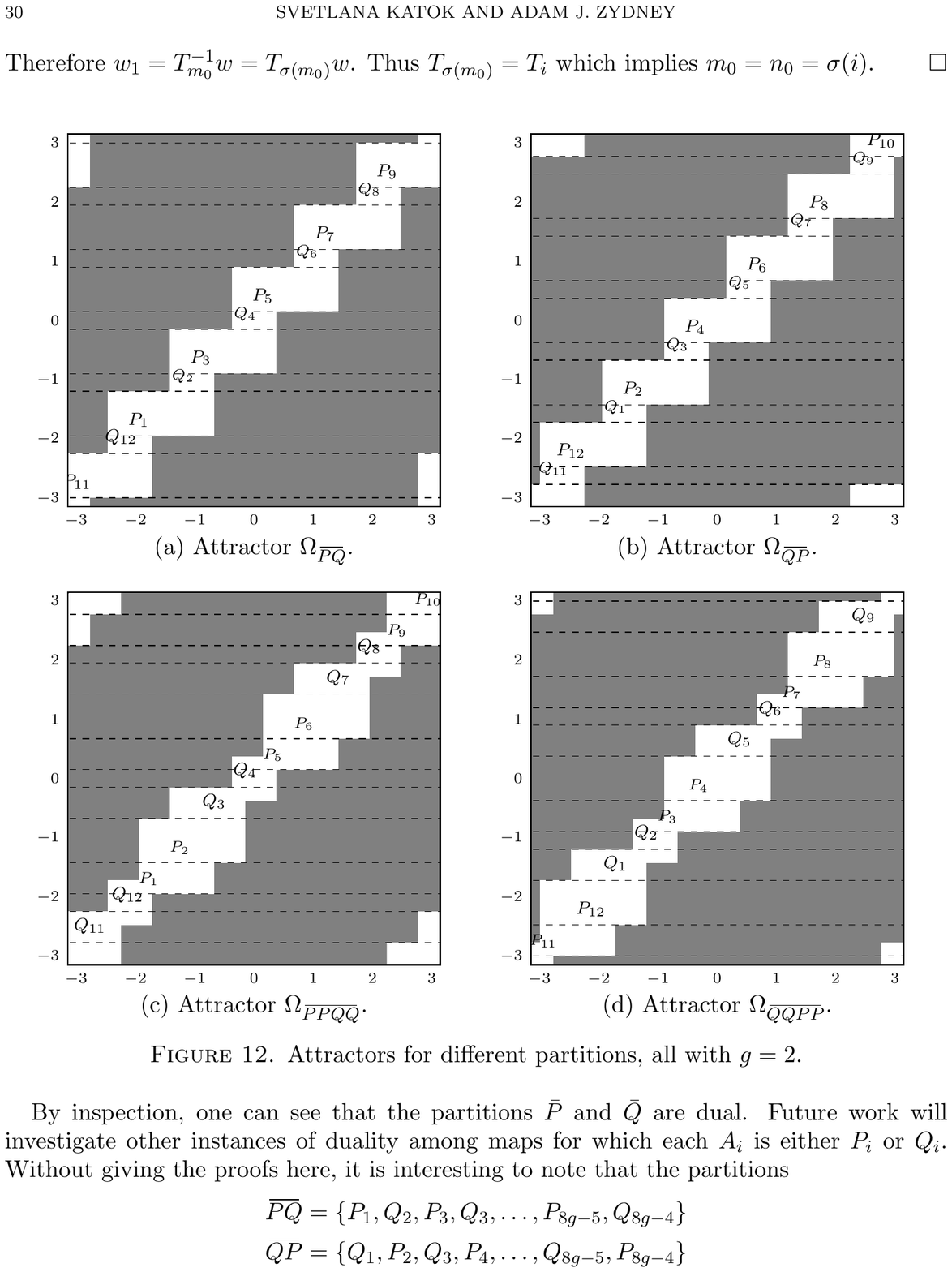} \\
    		{(b) Attractor $\Omega_{\,\overline{\!QP\!}\,}$.} \\[1em]
		
    		\includegraphics{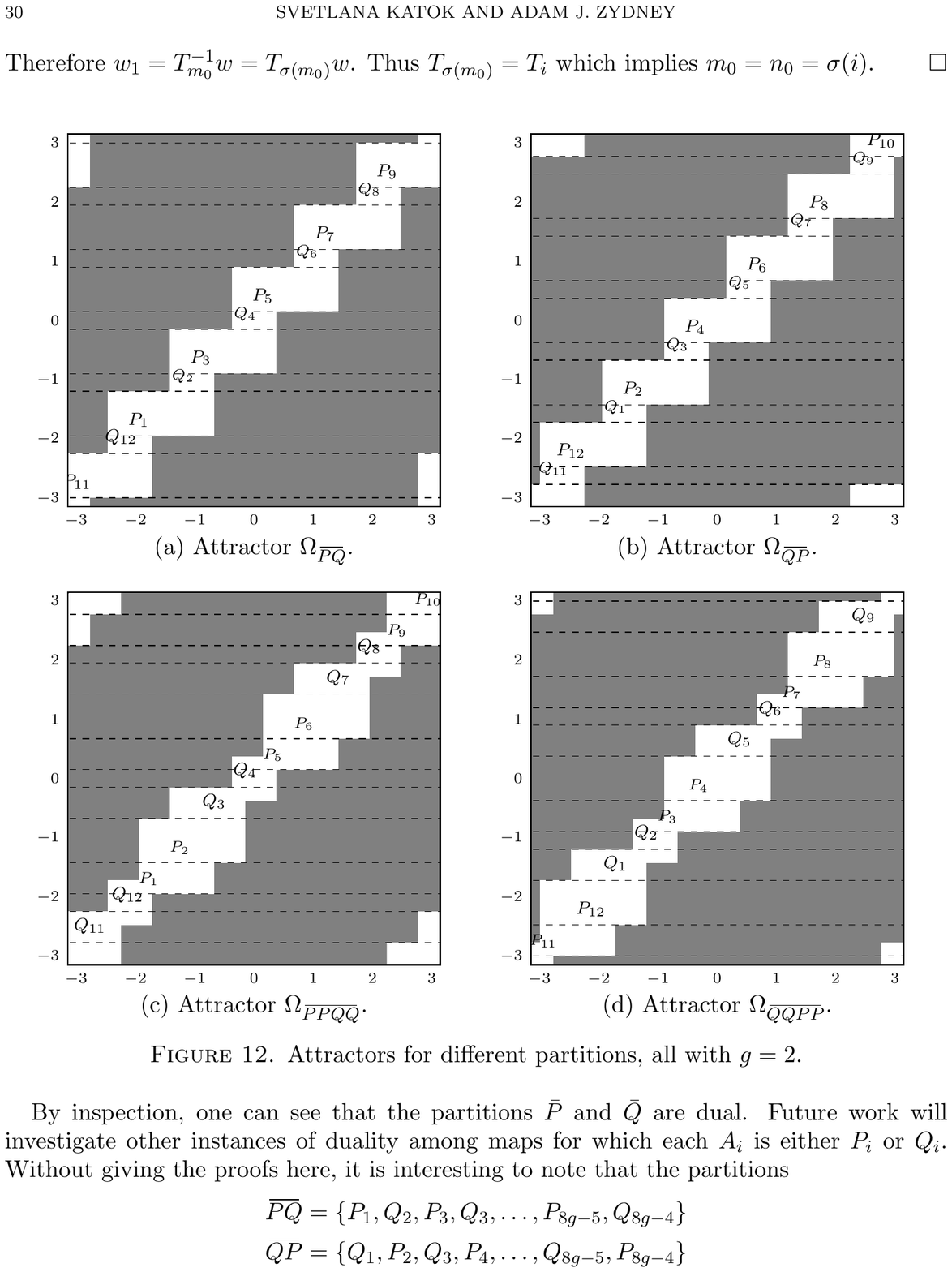} \\
    		{(d) Attractor $\Omega_{\,\overline{\!QQPP\!}\,}$.}
    \end{multicols}
	\caption{Attractors for different partitions, all with $g=2$.}
	\label{fig endpoints}
\end{figure}

By inspection, one can see that the partitions $\P$ and $\Q$ are dual. Future work will investigate other instances of duality among maps for which each $A_i$ is either $P_i$ or $Q_i$. Without giving the proofs here, it is interesting to note that the partitions
\begin{align*}
    {\,\overline{\!PQ\!}\,} &= \{ P_1, Q_2, P_3, Q_3, \ldots, P_{8g-5}, Q_{8g-4} \} \\
    {\,\overline{\!QP\!}\,} &= \{ Q_1, P_2, Q_3, P_4, \ldots, Q_{8g-5}, P_{8g-4} \}
\end{align*}
are dual to each other and that each of
\[ {\,\overline{\!PPQQ\!}\,} = \{ P_1, P_2, Q_3, Q_4, P_5, P_6, \ldots \} \]
and
\[ {\,\overline{\!QQPP\!}\,} = \{ Q_1, Q_2, P_3, P_4, Q_5, Q_6, \ldots \} \]
is self-dual ($\A' = \A$). Proofs of these dualities, and indeed of the structure of attractors for these specific partitions are known at this time (see Figure~\ref{fig endpoints}), but the general case of $A_i \in \{P_i,Q_i\}$ is not completely understood.

Restricting to short cycles with $A_i \in (P_i,Q_i)$ in fact yields no examples of duality:
\begin{prop}
    \label{no dual} There do not exist $\A$ and $\A'$, where $A_i,A_i' \in (P_i,Q_i)$, such that $\A$ and $\A'$ are dual and both satisfy the short cycle property.
\end{prop}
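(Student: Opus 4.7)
The plan is to argue by contradiction: assume $\A$ and $\A'$ are dual partitions, both with the short cycle property and with all $A_i, A_i' \in (P_i,Q_i)$ strictly, and show that the structure of the corners of $\Omega_\A$ is incompatible with the duality relation $\Omega_{\A'} = \phi(\Omega_\A)$.

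First I would pin down the precise open arcs on $\Sb$ containing each $B_i$ and $C_i$. Using Proposition~\ref{T images} together with $\sigma^2 = \Id$, the map $T_{\sigma(i-1)}$ sends $P_{\sigma(i-1)} \mapsto Q_i$ and $Q_{\sigma(i-1)} \mapsto Q_{i+1}$; since it is an orientation-preserving homeomorphism of $\Sb$, it carries the open sub-arc $(P_{\sigma(i-1)}, Q_{\sigma(i-1)})$ onto the short open arc $(Q_i, Q_{i+1})$, whose only element of $\P \cup \Q$ is the single point $P_{i+1}$. Since $A_{\sigma(i-1)}$ is strictly interior, $B_i = T_{\sigma(i-1)} A_{\sigma(i-1)}$ lies strictly in $(Q_i, Q_{i+1})$. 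An entirely analogous calculation shows that $C_i$ lies strictly in the open arc $(P_i, P_{i+1})$, whose only labeled point is $Q_i$. This is the main bookkeeping step, and essentially the only nontrivial obstacle; once the arcs are identified, the rest reduces to a small finite case check.

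Next I would apply the duality. Each upper corner $(P_i, B_i)$ of $\Omega_\A$ produces a corner $(B_i, P_i)$ of $\Omega_{\A'}$. Because $\A'$ itself has the short cycle property with $A_i' \in (P_i, Q_i)$, every corner of $\Omega_{\A'}$ is of the form $(P_j, B_j')$ or $(Q_{j+1}, C_j')$; in particular its first coordinate lies in $\P \cup \Q$. Combined with $B_i \in (Q_i, Q_{i+1})$, this forces $B_i = P_{i+1}$, so $(B_i, P_i) = (P_{i+1}, P_i)$ must be a corner of $\Omega_{\A'}$ with first coordinate $P_{i+1}$, hence an upper corner $(P_{i+1}, B_{i+1}')$, giving $B_{i+1}' = P_i$. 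However, applying the first step to $\A'$ places $B_{i+1}'$ strictly in the open arc $(Q_{i+1}, Q_{i+2})$, whose only labeled point is $P_{i+2}$; in particular $P_i \notin (Q_{i+1}, Q_{i+2})$, which yields the desired contradiction. The mirror argument starting from the lower corners $(Q_{i+1}, C_i)$ and the arc $(P_i, P_{i+1})$ runs identically with $P$ and $Q$ exchanged, so either half suffices.
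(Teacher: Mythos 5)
Your proof is correct and follows essentially the same strategy as the paper's: reflect a corner point of $\Omega_\A$ across the diagonal, observe that its first coordinate must then be one of the $P_j$ or $Q_{j+1}$, and contradict the localization of the $y$-levels ($B_i$, respectively $B_{i+1}'$) in an open arc containing none of the required labeled points. The paper shortcuts your two-step case analysis by noting that the reflection of an \emph{upper} corner of $\Omega_\A$ must be a \emph{lower} corner of $\Omega_{\A'}$, so $B_i = Q_{j+1}$ is forced at once and contradicts $B_i \in (Q_i, A_{i+1})$; otherwise the substance is the same.
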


\begin{proof}
    Recall that for short cycles the upper part of $\Omega_\A $ has corner points
    \[ (P_i,B_i) \quad\text{(upper part)} \qquad\text{and}\qquad (Q_{i+1},C_i) \quad\text{(lower part)}. \]
    where $B_i$ and $C_i$ are defined in \eqref{B and C}; we define $B_i'$ and $C_i'$ similarly based on $\A'$.

Suppose we have $\A$ and $\A'$ dual with short cycles. Then $\psi(P_i,B_i) = (B_i,P_i)$ is a lower corner point for $\Omega_{\A'}$, meaning that $(B_i,P_i) = (Q_{j+1},C_j')$ for some $j$. Specifically, $B_i = Q_{j+1}$. But since $B_i \in (Q_i,A_{i+1})$ by~\cite[Def.~3.10]{KU17}, $B_i$ cannot be $Q_{j+1}$ for any $j$, which leads to a contradiction.

This contradiction technically completes the proof, but since it relies on the lack of $Q_{j+1}$ in the \emph{open} interval $(Q_i,A_{i+1})$, it is interesting to consider the closed interval as well. There are two potential solutions to $B_i = Q_{j+1}$ in the closed interval $[Q_i,A_{i+1}]$, namely, $A_{i+1} = Q_{j+1}$ and $Q_i = Q_{j+1}$. For the former, we must use a value from $\Q$ in $\A$. For the latter, we have $j = i-1$, and from \eqref{B and C} and Proposition~\ref{T images}, we get that
    \begin{align*}
        B_i &= Q_i \\
        \hspace*{6em} T_{\sigma(i-1)} A_{\sigma(i-1)} &= Q_{i} \\
        A_{\sigma(i-1)} &= T_{\sigma(i-1)}^{-1} Q_{i} = T_{i-1} Q_{i} = P_{\sigma(i-1)},
    \end{align*}
    which is using a value from $\P$ in $\A$.
    Thus we see that combining the structure of $\Omega_\A$ given by short cycles with the existence of a dual forces $\A$ to use values from only $\P \cup \Q$.
\end{proof}

\section{Application to the entropy calculation}\label{sec entropy}

Following~\cite{KU12}, we use the representation of the geodesic flow on the compact surface $M=\G\backslash\D$ as the special flow over the arithmetic cross-section $C_\A$ and Abramov's formula to compute the measure-theoretic entropy of the two maps $F_\A$ and $f_\A$.

Let $(x,y,\psi)$, $z=x+iy\in\D,\,0\leq\psi<2\pi$, be the standard coordinate system on the unit tangent bundle $S\D$. The hyperbolic measures on $\D$ and $S\D$, corresponding to the metric (\ref{hypmetric}), are given by
\[ dA=\frac{4dxdy}{(1-{\abs z}^2)^2} \]
and $dAd\psi$, respectively. They are preserved by M\"obius transformations.

There is another coordinate system on $S\D$, introduced in~\cite{AF91} and used in~\cite{KU12}, which proved to be more convenient than $(x,y,\psi)$ in study of the geodesic flow, especially in the context of the cross-sections. Namely, to each $v\in S\D$ we assign the triple $(u,w,s)$. where $u$ and $w$ are unit circle variables and $s$ is real. The pair $(u,w)$ designate points of intersection of the geodesic determined by $v$ with the boundary $\partial\D=\{\,z : \abs z = 1 \,\}$; $u$ is the backward end and $w$ is the forward end of this geodesic. The real parameter $s$ is the hyperbolic length along the geodesic measured from its ``midpoint'' to the base point of $v$. As was pointed out in \cite{KU17}, it is a standard computation that the measure
\begin{equation}
    \label{AFhypmetric} d\nu=\displaystyle\frac{\abs{du}\,\abs{dw}}{\abs{u-w}^2}
\end{equation}
and the measure $dm=d\nu ds$ on $S\D$ are preserved by M\"obius transformations, in the first case applied to unit circle variables $u$ and $w$, and in the second case to the variables $(u,w,s)$. Therefore, $F_\A$ preserves the smooth probability measure
\begin{equation}
    \label{dnu} d\nu_\A=\frac{1}{K_\A} d\nu, \text{ where } K_\A=\int_{\Omega_\A} d\nu.
\end{equation}
This can be also be derived from the representation of the geodesic flow $\{\varphi^t\}$ on $\G\backslash S\D$ as the special flow over $\Omega_\A$ which parametrizes the arithmetic cross-section $C_\A$, as explained in Section~\ref{sec cross-section}, with $F_\A$ being the first return map to $\Omega_\A$ and the ceiling function $g_\A:\Omega_\A\to\R$ being the time of the first return to the cross-section $C_\A$ parametrized by $\Omega_\A$ (see Figure~\ref{fig first return}).

The circle map $f_\A$ is a factor of $F_\A$ (projecting on the $w$-coordinate), so one can obtain its smooth invariant measure $d\mu_\A$ by integrating $d\nu_\A$ over $ \Omega_\A$ with respect to the $u$-coordinate. 

We can immediately conclude that the systems $(F_\A, \nu_\A)$ and $(f_\A, \mu_\A)$ are ergodic from the fact that the geodesic flow $\{\varphi^t\}$ is ergodic with respect to $dm$.

The next result gives a formula for the measure-theoretic entropy of $(F_\A, \nu_\A)$.
Since $(F_\A, \nu_\A)$ is a natural extension of $(f_\A, \mu_\A)$, the measure-theoretic entropies of the two systems coincide, and we have the following result.
\begin{prop}
    $h_{\mu_{\!\A}}(f_\A)=h_{\nu_{\!\A}}(F_\A)=\frac{\pi^2(2g-2)}{K_\A}$.
\end{prop}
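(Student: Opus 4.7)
The plan is to combine three standard ingredients: the natural extension identity for entropy, Abramov's formula for special flows, and the known value of the entropy of the geodesic flow on a compact hyperbolic surface.

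First, the equality $h_{\mu_\A}(f_\A)=h_{\nu_\A}(F_\A)$ is stated in the sentence preceding the proposition and is an instance of the general fact that a measure-preserving invertible natural extension has the same measure-theoretic entropy as its one-sided factor. So it suffices to evaluate $h_{\nu_\A}(F_\A)$.

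Second, as developed in Section~\ref{sec cross-section}, the geodesic flow $\{\varphi^t\}$ on $SM$ is a special flow over the base $(\Omega_\A, F_\A, \nu_\A)$ with ceiling function $g_\A$, and the induced normalized invariant probability measure on the suspension is precisely the normalized Liouville measure $\bar m$ on $SM$ descended from $dm=d\nu\, ds$. Abramov's formula then gives
\[
    h_{\bar m}(\varphi^1) \;=\; \frac{h_{\nu_\A}(F_\A)}{\int_{\Omega_\A} g_\A\, d\nu_\A}.
\]
Since $M$ has constant curvature $-1$, the measure-theoretic entropy of the geodesic flow with respect to the normalized Liouville measure equals $1$ (the Liouville measure is the measure of maximal entropy and the topological entropy equals $\sqrt{-K}=1$). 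Therefore
\[
    h_{\nu_\A}(F_\A) \;=\; \int_{\Omega_\A} g_\A\, d\nu_\A \;=\; \frac{1}{K_\A}\int_{\Omega_\A} g_\A\, d\nu.
\]

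Third, I would identify $\int_{\Omega_\A} g_\A\, d\nu$ with the total $dm$-mass of the suspension, which, by the special-flow representation, equals the $dm$-volume of $SM$. To convert this to the Liouville volume I would establish the pointwise identity
\[
    d\nu\, ds \;=\; \tfrac{1}{4}\, dA\, d\psi
\]
by an infinitesimal Jacobian computation at the base point $z=0$, $\psi=0$: the corresponding geodesic is the real diameter with $u=-1$, $w=1$, $s=0$; a direct first-order expansion shows that the endpoints of the perturbed geodesic move as $\delta\alpha = \delta y + \delta\psi$ and $\delta\beta = -\delta y + \delta\psi$, while $ds = 2\,\delta x$ since the hyperbolic metric at the origin is twice the Euclidean one. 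The resulting Jacobian determinant is $4$, so combined with $|u-w|^2 = 4$ at the reference point the identity follows, and it extends to all of $S\D$ by M\"obius invariance of both sides. With Gauss--Bonnet giving $\operatorname{Area}(M)=4\pi(g-1)$, I get
\[
    \int_{\Omega_\A} g_\A\, d\nu \;=\; \tfrac{1}{4}\cdot 2\pi\cdot 4\pi(g-1) \;=\; \pi^2(2g-2),
\]
and dividing by $K_\A$ gives the desired formula.

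The main obstacle is the Jacobian identity $d\nu\, ds = \tfrac{1}{4}\,dA\, d\psi$: the computation itself is short, but it requires care about the precise definition of the ``midpoint'' used to parametrize $s$ and about the sign/orientation conventions for the endpoint angles $\alpha,\beta$ on $\Sb$. The remaining ingredients---Abramov's formula, the natural extension equality of entropies, and the value $h_{\bar m}(\varphi^1)=1$---are classical and can simply be cited.
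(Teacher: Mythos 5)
Your proposal is correct and follows essentially the same route as the paper: Abramov's formula with $h_{\tilde m}(\varphi^t)=1$, the identification of the normalized Liouville measure with the normalized suspension measure over $(\Omega_\A,F_\A,\nu_\A)$ (Ambrose--Kakutani), the relation $d\nu\,ds=\tfrac14\,dA\,d\psi$, and Gauss--Bonnet. The only difference is that you sketch a (correct) Jacobian verification of $d\nu\,ds=\tfrac14\,dA\,d\psi$ at the origin, whereas the paper simply cites this identity as a standard computation.
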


\begin{proof}
    Using the well-known fact that the entropy of the geodesic flow with respect to the normalized Liouville measure $d\tilde m=\frac{dm}{m(SM)}$ on $SM$ is equal to $1$ and Abramov's formula, we obtain
    \begin{equation}
        \label{Abramov} h_{\tilde m}(\{\varphi^t\})=1=\frac{h_{\nu_{\!\A}}(F_\A)}{\int_{\Omega_\A}g_\A d\nu_\A}.
    \end{equation}
    On the other hand, $d\tilde m$ can be represented by the Ambrose-Kakutani Theorem~\cite{AK42} as a smooth probability measure on the space $\Omega_\A^{\,g_{\!\A}}$ under the ceiling function $g_\A$, so we have
    \begin{equation}
        \label{AK42} d\tilde m=\frac{d\nu_\A ds}{\int_{\Omega_\A}g_\A d\nu_\A}=\frac{d\nu ds}{K_\A\int_{\Omega_\A}g_\A d\nu_\A}=\frac{dm}{m(SM)}.
    \end{equation}
    Combining (\ref{Abramov}) and (\ref{AK42}) and using the fact that the two measures $dm$ and $dAd\psi$, which are invariant under the geodesic flow on $S\D$, are related by
	\[ 4dm=dAd\psi, \]
	we obtain
    \[ h_{\nu_{\!\A}}(F_\A)=\int_{\Omega_{\A}}g_\A d\nu_\A=\frac{m(SM)}{K_\A}. \]
    But since by the Gauss-Bonnet formula $A(M)=\int_M dA=2\pi(2g-2)$, we have
    \begin{align*}
    	m(SM)
		&= \int_{SM} dm
		= \frac14\int_{SM} dAd\psi
		= \frac14(2\pi)\int_M dA
		\\&= \frac14(2\pi)\big(2\pi(2g-2)\big)
		= \pi^2(2g-2),
	\end{align*}
    and so $h_{\mu_{\!\A}}(f_\A)=h_{\nu_{\!\A}}(F_\A)=\frac{\pi^2(2g-2)}{K_\A}$.
\end{proof}

Thus, from the exact shape of the set $\Omega_\A$, the invariant measure and the entropy can be calculated precisely; one just needs to calculate the area $K_\A$ of the attractor. In~\cite[Proposition 7.1]{KU17} this invariant was calculated for the case when $\A$ satisfies the short cycle property.

\addtolength\textheight{1em}


\begin{thebibliography}{99}

\bibitem{A98} R.~Adler, \textit{Symbolic dynamics and Markov partitions}, Bull. Amer. Math. Soc. \textbf{35} (1998), No. 1, 1--56.

\bibitem{AF82} R.~Adler, L.~Flatto, \textit{Cross section maps for geodesic flows, I (The Modular surface)}, Birkh\"auser, Progress in Mathematics (ed. A. Katok) (1982), 103--161. MR0670077 (84h:58113)

\bibitem{AF84} R.~Adler, L.~Flatto, \textit{Cross section map for geodesic flow on the modular surface}, Contemp. Math. \textbf{26} (1984), 9--23. MR0737384 (85j:58128)

\bibitem{AF91} R.~Adler, L.~Flatto, \textit{Geodesic flows, interval maps, and
symbolic dynamics}, Bull. Amer. Math. Soc. \textbf{25} (1991), No.~2, 229--334.

\bibitem{AK42} W.~Ambrose, S.~Kakutani, \textit{Structure and continuity of measurable flows}, Duke Math. J., \textbf{9} (1942), 25--42.

\bibitem{BiS87} J.~Birman and C.~Series, \textit{Dehn's algorithm revisited, with applications to simple curves on surfaces}, in Combinatorial Group Theory and Topology (AM-111), (Princeton University Press, 1987), 451--478.

\bibitem{BS79} R.~Bowen, C.~Series, \textit{Markov maps associated with Fuchsian groups}, Inst. Hautes \'Etudes Sci. Publ. Math. No. 50 (1979), 153--170.

\bibitem{KH} A.~Katok, B.~Hasselblatt, \textit{Introduction to the Modern Theory of Dynamical Systems}, Cambridge University Press, 1995.

\bibitem{K96} S.~Katok, \textit{Coding of closed geodesics after Gauss and Morse}, Geom. Dedicata, \textbf{63} (1996), 123--145.


\bibitem{KU07} S.~Katok, I.~Ugarcovici, \textit{Symbolic dynamics for the modular surface and beyond}, Bull. Amer. Math. Soc., \textbf{44} (2007), 87--132.

\bibitem{KU10} S.~Katok, I.~Ugarcovici, \textit{Structure of attractors for $(a,b)$-continued fraction transformations}, Journal of Modern Dynamics, \textbf{4} (2010), 637--691. 

\bibitem{KU12} S.~Katok, I.~Ugarcovici, \textit{Applications of $(a,b)$-continued fraction transformations}, Ergod. Th. \& Dynam. Sys.  32, (2012), 755--777. 

\bibitem{KU17} S.~Katok, I.~Ugarcovici, \textit{Structure of attractors for boundary maps associated to Fuchsian groups}, Geometriae Dedicata, \textbf{191} (2017), 171--198.

\bibitem{KU17e} S.~Katok, I.~Ugarcovici. \textit{Errata: Structure of attractors for boundary maps associated to Fuchsian groups}. 

\bibitem{Ko29} P.~Koebe, \textit{Riemannsche Mannigfaltigkeiten und nicht euklidische Raumformen}, IV, {Sitzungsberichte Deutsche Akademie von Wissenschaften}, (1929), 414--557.


\bibitem{S81} C.~Series, \textit{Symbolic dynamics for geodesic flows},
Acta Math., \textbf{146} (1981), 103--128.

\bibitem{S81'} C.~Series, \textit{The infinite word problem and limit sets in Fuchsian groups}, Ergod. Th. \& Dynam. Sys. 1, (1981), 336--360.

\bibitem{S86} C.~Series, \textit{Geometrical Markov coding of geodesics on surfaces of constant negative curvature}, Ergod. Th. \& Dynam. Sys. 6, (1986), 601--625.

\bibitem{T72}  P.~Tukia, \emph{On discrete groups of the unit disk and their isomorphisms}, Ann. Acad. Sci. Fenn., Series A, I. Math. 504 (1972), 5--44.

\bibitem{W92} B.~Weiss, \textit{On the work of Roy Adler in ergodic theory and dynamical systems}, in Symbolic dynamics and its applications (New Haven, CT, 1991), 19--32, Contemp. Math., 135, Amer. Math. Soc., Providence, RI, 1992.
\end{thebibliography}
\end{document}